\numberwithin{equation}{section}
\theoremstyle{plain} 
\newtheorem{theorem}{Theorem}[section]
\newtheorem{lemma}[theorem]{Lemma}
\newtheorem{corollary}[theorem]{Corollary}
\newtheorem{proposition}[theorem]{Proposition}
\newtheorem{definition}[theorem]{Definition}
\newcounter{kevin}
\numberwithin{kevin}{section}
\theoremstyle{remark}
\newtheorem{remark}[kevin]{Remark}
\renewcommand{\Re}{\mathrm{Re}\,}
\renewcommand{\Im}{\mathrm{Im}\,}
\newcommand{\im}{\mathrm{Im}\,}
\newcommand{\E}{{\mathbb E }}
\newcommand{\R}{{\mathbb R }}
\newcommand{\N}{{\mathbb N}}
\newcommand{\Z}{{\mathbb Z}}
\renewcommand{\P}{{\mathbb P}}
\newcommand{\C}{{\mathbb C}}
\newcommand{\ii}{\mathrm{i}}
\newcommand{\deq}{\mathrel{\mathop:}=}
\newcommand{\e}[1]{\mathrm{e}^{#1}}
\newcommand{\ntr}{\mathrm{tr}\,}
\newcommand{\dd}{\mathrm{d}}
\newcommand{\ie}{\emph{i.e., }}
\newcommand{\eg}{\emph{e.g., }}
\newcommand{\cf}{\emph{c.f., }}
\newcommand{\PP}{\Phi}
\newcommand{\dL}{\mathrm{d}_{\mathrm{L}}}
\newcommand{\wt}{\widetilde}
\newcommand{\bs}{\boldsymbol}
\newcommand{\la}{\langle}
\newcommand{\ra}{\rangle}
\newcommand{\IE}{ \mathbb{I}\mkern-1mu\mathbb{E}}
\DeclareMathOperator*{\supp}{supp\,}
\DeclareMathOperator*{\OSD}{O_{\mkern-2mu\scriptstyle{\prec}}\mkern-2mu}
\DeclareMathOperator*{\sgn}{sgn}
\renewcommand{\mathbf}[1]{\bs{#1}}
\begin{document}

 \begin{minipage}{0.85\textwidth}
 \vspace{2.5cm}
 \end{minipage}
\begin{center}
\large\bf
Local law of addition of random matrices on optimal scale
\end{center}

\renewcommand{\thefootnote}{\fnsymbol{footnote}}	
\vspace{1cm}
\begin{center}
 \begin{minipage}{0.3\textwidth}
\begin{center}
Zhigang Bao\footnotemark[2]  \\
\footnotesize {IST Austria}\\
{\it zhigang.bao@ist.ac.at}
\end{center}
\end{minipage}
\begin{minipage}{0.3\textwidth}
\begin{center}
L\'aszl\'o Erd{\H o}s\footnotemark[1]  \\
\footnotesize {IST Austria}\\
{\it lerdos@ist.ac.at}
\end{center}
\end{minipage}
\begin{minipage}{0.3\textwidth}
 \begin{center}
Kevin Schnelli\footnotemark[2]\\
\footnotesize 
{IST Austria}\\
{\it kevin.schnelli@ist.ac.at}
\end{center}
\end{minipage}
\footnotetext[1]{Partially supported by ERC Advanced Grant RANMAT No.\ 338804.}
\footnotetext[2]{Supported by ERC Advanced Grant RANMAT No.\ 338804.}

\renewcommand{\thefootnote}{\fnsymbol{footnote}}	

\end{center}
\vspace{1cm}

\begin{center}
 \begin{minipage}{0.8\textwidth}\footnotesize{
The eigenvalue distribution of the sum of two large Hermitian matrices, 
when one of them is conjugated by a Haar distributed unitary matrix, is asymptotically given by the free convolution
of their spectral distributions. We prove that this convergence also holds locally in the bulk of the spectrum, down to the optimal
scales larger than the eigenvalue spacing. The corresponding eigenvectors 
are fully delocalized. Similar results hold for the sum of two real symmetric matrices, when one is conjugated
by a Haar orthogonal matrix. }
\end{minipage}
\end{center}

 \vspace{2mm}
 
 {\small
\footnotesize{\noindent\textit{Date}: \today}\\
 \footnotesize{\noindent\textit{Keywords}: Random matrices, local eigenvalue density, free convolution}
 
 \footnotesize{\noindent\textit{AMS Subject Classification (2010)}: 46L54, 60B20}
 \vspace{2mm}

 }

\thispagestyle{headings}

\section{Introduction}

The pioneering work \cite{Voi91} of Voiculescu connected free probability with random matrices,
as one of the most prominent examples for a noncommutative probability space is the space of 
Hermitian $N\times N$ matrices. 
On one hand, the law of the sum of two free random variables with laws $\mu_\alpha$ and $\mu_\beta$ 
 is given by the free additive convolution $\mu_\alpha \boxplus\mu_\beta$. On the other hand, in case
 of Hermitian matrices, the law can be identified with the distribution of the eigenvalues. Thus the
 free additive convolution computes the eigenvalue distribution of the sum of two free Hermitian matrices.
However, freeness is characterized by  an infinite collection of moment identities and cannot easily be verified
in general. A fundamental direct mechanism to generate freeness is conjugation by random unitary matrices.
More precisely, two  large Hermitian random matrices are asymptotically free if the unitary transfer matrix
between their eigenbases is Haar distributed. The most important example is when the 
spectra of the two matrices are deterministic and the unitary conjugation is the sole source of  randomness.
In other words, if $A=A^{(N)}$ and $B=B^{(N)}$ are two sequences of deterministic $N\times N$ Hermitian matrices
and~$U$ is a Haar distributed unitary, then $A$ and $UBU^*$ are asymptotically free in the large $N$ limit
and the eigenvalue distribution of $A+UBU^*$ is given by the free additive convolution $\mu_A\boxplus \mu_B$
of the eigenvalue distributions of $A$ and $B$. 

Since Voiculescu's first proof, several  alternative approaches have been developed, see \eg \cite{Bia98bis, Col03, VP, Spe93}, but all of them  were {\it global} in the sense that they
describe the eigenvalue distribution in the weak limit, \ie on the macroscopic
scale, tested against $N$-independent test functions (to fix the scaling, 
we assume that $A^{(N)}$ and $B^{(N)}$ are uniformly bounded).

The study of a {\it local law}, \ie identification of the eigenvalue distribution of $A+UBU^*$ with the free additive convolution
 below the macroscopic scale, was initiated by Kargin. First, he reached the scale  $(\log N)^{-1/2}$
 in~\cite{Kargin2012} by using the Gromov--Milman concentration inequality for the Haar measure (a weaker
 concentration result was obtained earlier by Chatterjee~\cite{Chatterjee}).
Kargin later improved his result down to scale $N^{-1/7}$ in the bulk of the spectrum~\cite{Kargin}
by analyzing the stability of the
subordination equations more efficiently. This result was valid only away from  finitely
many points in the bulk spectrum and no effective control was given on this exceptional set. Recently in~\cite{BES15}, we reduced the minimal scale to~$N^{-2/3}$  by establishing the optimal stability 
and by using a bootstrap procedure to successively localize the 
Gromov--Milman inequality from larger to smaller scales. Moreover, our result holds in the entire bulk spectrum.  In fact, the key novelty in~\cite{BES15}
was  a new stability analysis in the entire bulk spectrum.

The main result of the current paper is the local law for $H=A+UBU^*$ down to the scale $N^{-1+\gamma}$,
for any $\gamma>0$. Note that the typical eigenvalue spacing is of order $N^{-1}$, a scale where 
the eigenvalue density fluctuates and no local law holds. Thus our result holds down to the optimal scale.

There are several motivations to establish such refinements of the macroscopic limit laws. 
First, such bounds are used as {\it a priori} estimates in the proofs of  Wigner-Dyson-Mehta type universality
results on local spectral statistics; see \eg~\cite{EY12, BEYY14,ES15, LY15} and references therein.
Second, control on the diagonal resolvent matrix elements for some $\eta =\im z$ implies
that the eigenvectors are delocalized on scale $\eta^{-1}$; the optimal scale for $\eta$ yields complete
delocalization of the eigenvectors.  Third, the local law is ultimately related to an effective speed of convergence in Voiculescu's theorem on the global scale~\cite{Kargin,BES15}.
 
Our proof has three major ingredients. First, we use a {\it partial randomness decomposition} of the Haar measure
that enables us to take partial expectations of $G_{ii}$, the diagonal matrix elements of the resolvent $G(z)=(H-z)^{-1}$
at spectral parameter $z\in \C^+$.
Exploiting concentration only for the partial randomness surpasses the more general but less flexible Gromov--Milman technique. 
Second, to compute the partial expectations of $G_{ii}$,  we establish a new
system of self-consistent equations involving {\it only two} auxiliary quantities.
Keeping in mind, as a close analogy, that 
freeness  involves checking infinitely many moment  conditions for  monomials
of $A$, $B$ and $U$, one may fear that an equation for $G$ involves $BG$, whose equation involves $BGB$ {\it etc}.,
\ie one would end up with an infinite system of equations. Surprisingly this is not the case
and monitoring
two appropriately chosen quantities  in tandem is sufficient to close the system. 
Third, to connect the partial expectation of $G_{ii}$ with the subordination functions from free probability,
we rely on the optimal stability
result for the subordination equations obtained in~\cite{BES15}. 

One prominent application of our work concerns the {\it single ring theorem} of Guionnet, Krishnapur and Zeitouni \cite{GKZ11} on the 
eigenvalue distribution of matrices of the form $UTV$, where $T$ is a fixed positive definite matrix and 
$U$, $V$ are independent Haar distributed. 
 Via the hermitization technique, the current proof of the local law for the addition of random matrices  can also be used
 to prove a local version of the single ring theorem. This approach was demonstrated recently  by 
 Benaych-Georges~\cite{BG}, who proved a local single ring theorem on scale $(\log N)^{-1/4}$
 using Kargin's local law on scale $(\log N)^{-1/2}$. The local law on the optimal scale $N^{-1}$ and throughout the entire bulk spectrum
is a key ingredient to prove the local single ring theorem on the optimal scale. 
The details  are deferred to our separate work in preparation~\cite{BES15tri}.

\subsection{Notation}
 The following definition for high-probability estimates is suited for our purposes, which was first used in~\cite{EKY}.
\begin{definition}\label{definition of stochastic domination}
Let
\begin{align}
 X=(X^{(N)}(v)\,:\, N\in\N\,, v\in \mathcal{V}^{(N)})\,,\qquad\qquad Y=(Y^{(N)}(v)\,:\, N\in\N\,,\,v\in \mathcal{V}^{(N)})
\end{align}
be two families of nonnegative random variables where $\mathcal{V}^{(N)}$ is a possibly $N$-dependent parameter set. We say that $Y$ stochastically dominates $X$, uniformly in~$v$, if for all (small) $\epsilon>0$ and (large) $D>0$,
\begin{align}
\sup_{v\in \mathcal{V}^{(N)}} \P\,\bigg(X^{(N)}(v)>N^{\epsilon} Y^{(N)}(v) \bigg)\le N^{-D}\,, \label{080630}
\end{align} 
for sufficiently large $N\ge N_0(\epsilon,D)$. If $Y$ stochastically dominates $X$, uniformly in~$v$, we write $X \prec Y$. 
\end{definition}

 We further rely on the following notation. We use the symbols $O(\,\cdot\,)$ and $o(\,\cdot\,)$ for the standard big-O and little-o notation. We use~$c$ and~$C$ to denote strictly positive constants that do not depend on $N$. Their values may change from line to line. For $a,b\ge0$, we write $a\lesssim b$, $a\gtrsim b$ if there is $C\ge1$ such that $a\le Cb$, $a\geq C^{-1} b$ respectively.

 We use bold font for vectors in $\C^N$ and denote the components as $\mathbf{v}=(v_1,\ldots,v_N)\in\C^N$. The canonical basis of $\C^N$ is denoted by $(\mathbf{e}_i)_{i=1}^N$. For $\mathbf{v},\mathbf{w}\in\C^N$, we write $\mathbf{v}^* \mathbf{w}$ for the scalar product $\sum_{i=1}^N \overline{v}_i w_i $.
  We denote by $\|\mathbf v\|_2$ the Euclidean norm  and by $\|\mathbf{v}\|_{\infty}=\max_{i}|v_i|$ the uniform norm of $\mathbf{v}\in\C^N$. 

We denote by $M_N(\C)$ the set of $N\times N$ matrices over $\C$. For $A\in M_N(\C)$, we denote by $\|A\|$ its operator norm and by $\|A\|_2$ its Hilbert-Schmidt norm. The matrix entries of $A$ are denoted by $A_{ij}=\mathbf{e}_i^* A\mathbf{e}_j$. We denote by $\ntr\! A$ the normalized trace of $A$, \ie $\ntr\! A=\frac{1}{N}\sum_{i=1}^N\,A_{ii}$. 
For $\mathbf{v},\mathbf{w}\in\C^N$, the rank-one matrix $\mathbf{v}\mathbf{w}^*$ has elements $(\mathbf{v}\mathbf{w}^*)_{ij}=({v}_i\overline{w}_j)$.

 Let $\mathbf{g}=(g_1,\ldots, g_N)$ be a real or complex Gaussian vector. We write $\mathbf{g}\sim \mathcal{N}_{\mathbb{R}}(0,\sigma^2I_N)$ if $g_1,\ldots, g_N$ are independent and identically distributed (i.i.d.) $N(0,\sigma^2)$ normal variables; and we write $\mathbf{g}\sim \mathcal{N}_{\mathbb{C}}(0,\sigma^2I_N)$ if $g_1,\ldots, g_N$ are i.i.d.\ $N_{\mathbb{C}}(0,\sigma^2)$ variables, where $g_i\sim N_{\mathbb{C}}(0,\sigma^2)$ means that $\Re g_i$ and $\Im g_i$ are independent $N(0,\frac{\sigma^2}{2})$ normal variables. 

Finally, we use double brackets to denote index sets, \ie 
$$
\llbracket n_1, n_2 \rrbracket \deq [n_1, n_2] \cap \Z\,,
$$
for $n_1, n_2 \in \R$.

\section{Main results}

\subsection{Free additive convolution} \label{le subsection additive convolution} In this subsection, we recall the definition of the free additive convolution. This is a shortened version of Section~2.1~of~\cite{BES15} added for completeness. 

Given a probability measure\footnote{All probability measures considered will be assumed to be Borel.} $\mu$ on $\R$ its {\it Stieltjes transform}, $m_\mu$, on the complex upper half-plane $\C^+\deq\{ z\in\C\,:\, \im z>0\}$ is defined by
\begin{align}\label{le definition of stieltjes transform}
 m_\mu(z)\deq\int_\R\frac{\dd\mu(x)}{x-z}\,, \qquad\qquad z\in\C^+\,.
\end{align}
Note that $m_{\mu}\,:\,\C^+\rightarrow \C^+$ is an analytic function such that
\begin{align}\label{le limit to be a probablity measure}
 \lim_{\eta\nearrow\infty} \ii \eta\, m_\mu(\ii\eta)=-1\,.
\end{align}
Conversely, if $m\,:\, \C^+\rightarrow \C^+$ is an analytic function such that $\lim_{\eta\nearrow\infty} \ii \eta\, m(\ii\eta)=1$, then $m$ is the Stieltjes transform of a probability measure $\mu$, \ie $m(z)=m_\mu(z)$, for all $z\in\C^+$. 

We denote by $F_\mu$ the {\it negative reciprocal Stieltjes transform} of $\mu$, \ie
\begin{align}\label{le F definition}
 F_{\mu}(z)\deq -\frac{1}{m_{\mu}(z)}\,,\qquad \qquad z\in\C^+\,.
\end{align}
Observe that
 \begin{align}\label{le F behaviour at infinity}
\lim_{\eta\nearrow \infty}\frac{F_{\mu}(\ii\eta)}{\ii\eta}=1\,,
\end{align}
as follows from~\eqref{le limit to be a probablity measure}. Note, moreover, that $F_\mu$ is an analytic function on $\C^+$ with nonnegative imaginary part.

The {\it free additive convolution} is the symmetric binary operation on probability measures on $\R$ characterized by the following result.
\begin{proposition}[Theorem 4.1 in~\cite{BB}, Theorem~2.1 in~\cite{CG}]\label{le prop 1}
Given two probability measures, $\mu_1$ and $\mu_2$, on $\R$, there exist unique analytic functions, $\omega_1\,,\omega_2\,:\,\C^+\rightarrow \C^+$, such that,
 \begin{itemize}[noitemsep,topsep=0pt,partopsep=0pt,parsep=0pt]
  \item[$(i)$] for all $z\in \C^+$, $\im \omega_1(z),\,\im \omega_2(z)\ge \im z$, and
  \begin{align}\label{le limit of omega}
  \lim_{\eta\nearrow\infty}\frac{\omega_1(\ii\eta)}{\ii\eta}=\lim_{\eta\nearrow\infty}\frac{\omega_2(\ii\eta)}{\ii\eta}=1\,;
  \end{align}
  \item[$(ii)$] for all $z\in\C^+$, 
  \begin{align}\label{le definiting equations}
   F_{\mu_1}(\omega_{2}(z))=F_{\mu_2}(\omega_{1}(z))\,,\qquad\qquad \omega_1(z)+\omega_2(z)-z=F_{\mu_1}(\omega_{2}(z))\,.
  \end{align}

 \end{itemize}
\end{proposition}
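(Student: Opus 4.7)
The plan is to establish existence and uniqueness via a fixed-point argument for an analytic self-map of $\C^+$, in the spirit of Belinschi and Bercovici. Define $H_j(w) \deq F_{\mu_j}(w) - w$ for $j = 1,2$ and $w \in \C^+$; the Nevanlinna representation of $F_{\mu_j}$ together with the asymptotic \eqref{le F behaviour at infinity} yields $\im H_j(w) \geq 0$ on $\C^+$ and $H_j(w) = o(|w|)$ as $|w| \to \infty$ within any Stolz cone. Consequently, for each fixed $z \in \C^+$, the composed map
\begin{equation*}
T_z(w) \deq H_1\bigl(H_2(w) + z\bigr) + z
\end{equation*}
is analytic on $\C^+$, sends $\C^+$ into the half-plane $\{\im w \geq \im z\}$, and extends continuously at $w = \infty$ with $T_z(\infty) = F_{\mu_1}(z)$, so $T_z(\C^+)$ is a bounded subset of $\C^+$ lying at positive hyperbolic distance from the boundary $\R \cup \{\infty\}$.

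The central step is to locate a fixed point of $T_z$. Since $T_z$ maps $\C^+$ into a relatively hyperbolic-compact subset, the Earle-Hamilton theorem (or a direct Denjoy-Wolff argument) shows that $T_z$ is a strict contraction in the Poincaré metric and hence possesses a unique attracting fixed point $\omega_1(z) \in \C^+$, obtained as the limit of the iterates $T_z^{\circ n}(w_0)$ from any seed $w_0 \in \C^+$. Setting $\omega_2(z) \deq H_2(\omega_1(z)) + z$, the fixed-point equation $\omega_1 = H_1(\omega_2) + z$ rearranges to $F_{\mu_1}(\omega_2) = \omega_1 + \omega_2 - z$, while the definition of $\omega_2$ rearranges to $F_{\mu_2}(\omega_1) = \omega_1 + \omega_2 - z$, yielding both identities of \eqref{le definiting equations}. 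The bound $\im \omega_j \geq \im z$ in (i) is immediate from $\omega_j = H_j(\omega_{3-j}) + z$ and $\im H_j \geq 0$, while analyticity of $\omega_j$ in $z$ follows from the implicit function theorem applied at the attracting point, where $|\partial_w T_z(\omega_1)| < 1$ by strict contraction. The normalization \eqref{le limit of omega} follows by inserting the ansatz $\omega_j(\ii\eta) \sim c_j \ii\eta$ into the fixed-point equation and using $F_{\mu_j}(w)/w \to 1$, which forces $c_j = 1$. Uniqueness of the pair $(\omega_1, \omega_2)$ reduces to uniqueness of the attracting fixed point of $T_z$ in $\C^+$, since any other pair satisfying (i) and (ii) would provide a second interior fixed point.

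The main obstacle is the contraction step: one must verify that $T_z$ pushes $\C^+$ into a subset lying at uniform hyperbolic distance from both $\R$ and $\infty$, which is precisely what the combination of the strict bound $\im T_z \geq \im z$ and the continuous extension $T_z(\infty) = F_{\mu_1}(z) \in \C^+$ delivers. This single geometric input simultaneously provides existence, uniqueness, and analytic dependence on $z$; the remainder of the argument is then routine manipulation of the Nevanlinna representation of $F_{\mu_1}, F_{\mu_2}$.
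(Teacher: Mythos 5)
The paper does not prove this proposition; it imports it from Belinschi--Bercovici \cite{BB} and Chistyakov--G\"otze \cite{CG}, so there is no ``paper proof'' to compare against. Your sketch follows the Belinschi--Bercovici fixed-point route, which is indeed the standard approach, but there is a genuine gap in the contraction step.

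The problem is the claim that $T_z$ extends continuously to $w=\infty$ with $T_z(\infty) = F_{\mu_1}(z)$, and that consequently $T_z(\C^+)$ is a bounded set at positive hyperbolic distance from $\partial\C^+$. Neither part survives scrutiny for general probability measures. First, $H_2(w) = F_{\mu_2}(w) - w$ tends to a finite limit along $w=\ii\eta$, $\eta\to\infty$, only when $\mu_2$ has a finite first moment (and then the limit is $-\int x\,\dd\mu_2$, not $0$, so $T_z(\infty)$ would be $F_{\mu_1}(z+c)-c$ for some constant $c$, not $F_{\mu_1}(z)$); in general the limit need not exist. Second, and more importantly, even when the asymptotic limit exists, $T_z(\C^+)$ is typically unbounded: $\im H_2(w)$ blows up as $w$ approaches atoms (or more generally the singular support) of the Nevanlinna measure of $F_{\mu_2}$ from inside $\C^+$, so the image of $T_z$ meets arbitrarily high horocycles $\{\im w > M\}$. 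Under the Cayley transform, $T_z(\C^+)$ is contained in a horodisk tangent to $\partial\mathbb{D}$ at the image of $\infty$ --- a set that touches the boundary --- so the hypothesis of Earle--Hamilton (image strictly interior to the domain) fails, and you cannot conclude strict contraction or even boundedness of the iterates in this way.

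What actually carries the argument is the Denjoy--Wolff theorem without strict containment: $T_z$ is not an automorphism of $\C^+$ (it is not surjective because $\im T_z\ge \im z$), hence it has a unique Denjoy--Wolff point, which cannot lie on $\R$ since all iterates satisfy $\im T_z^{\circ n}\ge\im z$. The remaining --- and genuinely nontrivial --- task is to exclude $\infty$ as the Denjoy--Wolff point. This requires a separate argument (Julia--Wolff--Carath\'eodory type), for instance using that Wolff's lemma would force $\im T_z(w)\ge\im w$ for all $w$ if $\infty$ were attracting, whereas the normalization $F_{\mu_j}(\ii\eta)/(\ii\eta)\to 1$ shows $T_z(w)=o(|w|)$ non-tangentially and one then derives a contradiction. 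This step, which your sketch replaces with an incorrect boundedness claim, is precisely the crux of the Belinschi--Bercovici proof. The rest of your outline (deriving both subordination identities from the fixed-point relation, analyticity via the implicit function theorem at an attracting fixed point, the normalization, and the reduction of uniqueness of the pair $(\omega_1,\omega_2)$ to uniqueness of the interior fixed point) is sound once that gap is filled.
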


It follows from~\eqref{le limit of omega} that the analytic function $F\,:\,\C^+\rightarrow \C^+$ defined by
\begin{align}\label{le kkv}
 F(z)\deq F_{\mu_1}(\omega_{2}(z))=F_{\mu_2}(\omega_{1}(z))\,,
\end{align}
satisfies the analogue of~\eqref{le F behaviour at infinity}. Thus $F$ is the negative reciprocal Stieltjes transform of a probability measure $\mu$, called the free additive convolution of $\mu_1$ and $\mu_2$,  usually denoted by $\mu\equiv\mu_1\boxplus\mu_2$. The functions $\omega_1$ and $\omega_2$ of Proposition~\ref{le prop 1} are called {\it subordination functions} and $m$ is said to be subordinated to~$m_{\mu_1}$, respectively to $m_{\mu_2}$. Moreover, observe that $\omega_1$ and~$\omega_2$ are analytic functions on $\C^+$ with nonnegative imaginary parts. Hence they admit the Nevanlinna representations
\begin{align}\label{le neva for omega}
 \omega_{j}(z)=a_{\omega_{j}}+z+\int_\R\frac{1+zx}{x-z}\,\dd\varrho_{\omega_{j}}(x)\,,\qquad\qquad j=1,2\,,\qquad z\in\C^+\,,
\end{align}
where $a_{\omega_j}\in\R$ and $\varrho_{\omega_j}$ are finite Borel measures on $\R$. For further details and historical remarks on the free additive convolution we refer to, \eg~\cite{VDN,HP}.

Choosing $\mu_1$ as a single point mass at $b\in \R$ and $\mu_2$ arbitrary, it is straightforward to check that~$\mu_{1}\boxplus\mu_2 $ is~$\mu_2$ shifted by~$b$. We exclude this uninteresting case by assuming hereafter that $\mu_1$ and~$\mu_2$ are both supported at more than one point. For general $\mu_1$ and $\mu_2$, the atoms of $\mu_1\boxplus\mu_2$ are identified as follows.  A point $c\in\R$ is an atom of $\mu_1\boxplus\mu_2$, if and only if there exist $a,b\in\R$ such that $c=a+b$ and $\mu_1(\{a\})+\mu_2(\{b\})>1$; see [Theorem~7.4,~\cite{BeV98}]. Properties of the continuous part of $\mu_1\boxplus\mu_2$ may be inferred from the  boundary behavior of the functions $F_{\mu_1\boxplus\mu_2}$, $\omega_1$ and $\omega_2$. For simplicity, we restrict the discussion to compactly supported probability measures.
\begin{proposition}[Theorem 2.3 in~\cite{Bel1}, Theorem~3.3 in~\cite{Bel}]\label{prop extension}
 Let $\mu_1$ and $\mu_2$ be compactly supported probability measures on $\R$ none of them being a single point mass. Then the functions $F_{\mu_1\boxplus\mu_2}$, $\omega_1$, $\omega_2\,:\, \C^+\to\C^+$ extend continuously to $\R$.
\end{proposition}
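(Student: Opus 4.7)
The plan is to combine standard Pick-function theory with the stability of the subordination system (2.10) to upgrade a priori nontangential boundary values of $\omega_1,\omega_2$ to genuine continuous extensions, and then transfer this to $F_{\mu_1\boxplus\mu_2}$ via (2.12).

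\emph{Preliminaries.} Since $\mu_1,\mu_2$ are compactly supported, the Stieltjes transforms $m_{\mu_j}$ extend continuously from $\C^+$ to $\overline{\C^+}$ (as analytic functions away from $\supp\mu_j$ and by Stieltjes inversion on $\supp\mu_j$). Consequently $F_{\mu_j}=-1/m_{\mu_j}$ extends continuously to $\overline{\C^+}$, vanishing precisely at atoms of $\mu_j$ where $m_{\mu_j}$ diverges. Moreover, by Proposition~\ref{le prop 1} the functions $\omega_j:\C^+\to\C^+$ are Pick functions with $\im\omega_j(z)\geq\im z$, so the Nevanlinna representation (2.11) holds; in particular, nontangential boundary limits $\omega_j(x_0+\ii 0)$ exist in $\overline{\C^+}\cup\{\infty\}$ for every $x_0\in\R$.

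\emph{Upgrade to full continuity.} Fix $x_0\in\R$ and an arbitrary sequence $z_k\to x_0$ in $\C^+$ (not necessarily nontangential). The Nevanlinna form (2.11) forces $(\omega_j(z_k))$ to be precompact in $\overline{\C^+}\cup\{\infty\}$; extracting a subsequence, suppose $\omega_j(z_k)\to w_j^\ast$. Letting $k\to\infty$ in (2.10) and using continuity of $F_{\mu_j}$ on $\overline{\C^+}$ yields
\begin{align*}
F_{\mu_1}(w_2^\ast)=F_{\mu_2}(w_1^\ast)=w_1^\ast+w_2^\ast-x_0.
\end{align*}
I identify $(w_1^\ast,w_2^\ast)$ with the vertical limit $\lim_{\epsilon\downarrow 0}(\omega_1(x_0+\ii\epsilon),\omega_2(x_0+\ii\epsilon))$, which is well-defined from (2.11) along the vertical direction. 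Any other subsequential limit along $z_k\to x_0$ satisfies the same boundary fixed-point system and therefore must coincide with this canonical choice, giving the desired continuous extension of $\omega_1,\omega_2$ to $\R$; the continuous extension of $F_{\mu_1\boxplus\mu_2}$ then follows immediately from (2.12) and the continuous extension of $F_{\mu_2}$.

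\emph{Main obstacle.} The nontrivial step is exactly the last uniqueness claim. On $\C^+$ the strict inequality $\im\omega_j(z)\geq\im z>0$ together with Proposition~\ref{le prop 1} guarantees a unique fixed point, but on $\R$ the boundary system can in principle admit spurious solutions (for instance, the complex-conjugate branch on the bulk of $\supp(\mu_1\boxplus\mu_2)$, or degenerate branches where $F_{\mu_j}$ vanishes). Ruling these out requires exploiting the orientation coming from $\omega_j$ mapping $\C^+$ into $\C^+$: any subsequential boundary limit $w_j^\ast$ must satisfy $\im w_j^\ast\geq 0$, and an angular-derivative / Julia--Carath\'eodory analysis of $z\mapsto\omega_j(z)$ combined with the open mapping theorem selects the unique admissible branch consistent with the upper half-plane. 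An alternative route, perhaps cleaner, is to rewrite the system as a contractive fixed-point iteration on the Denjoy--Wolff image of $F_{\mu_1}\circ F_{\mu_2}^{-1}$ type composition (the approach used in Belinschi's \cite{Bel}) and verify boundary stability of this iteration; either way, this step is the only nontrivial ingredient, the rest being essentially soft compactness and continuity arguments.
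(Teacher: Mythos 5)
Note first that the paper does not prove this proposition itself; it is imported verbatim from Belinschi (Theorem~2.3 in \cite{Bel1}, Theorem~3.3 in \cite{Bel}), so there is no internal argument to compare against and the relevant comparison is with Belinschi's original proof.

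Your proposal rests on a false premise. In the preliminaries you assert that, because $\mu_1,\mu_2$ are compactly supported, the Stieltjes transforms $m_{\mu_j}$ (and therefore $F_{\mu_j}=-1/m_{\mu_j}$) extend continuously from $\C^+$ to $\overline{\C^+}$. Compact support gives analyticity on $\C\setminus\supp\mu_j$ and nothing more on the support itself. Already $m_{\delta_0}(z)=-1/z$ has no continuous extension at $0$; for a measure with a merely integrable density, $\re m_{\mu_j}$ near $\R$ is governed by the Hilbert transform of an $L^1$ function, which exists only a.e.\ and generically has no pointwise boundary limit; and singular continuous parts are worse still. ``Stieltjes inversion'' identifies the measure from the boundary values of $\im m_{\mu_j}$ in a distributional sense and has nothing to say about continuity of $m_{\mu_j}$ on $\supp\mu_j$. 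Since $F_{\mu_j}$ is itself a Pick function, it inherits no better boundary behaviour. Indeed, the entire content of Belinschi's theorem is that $\mu_1\boxplus\mu_2$, and the subordination functions, are \emph{strictly more} regular than the factors $\mu_1,\mu_2$; presupposing boundary continuity of $F_{\mu_1},F_{\mu_2}$ and feeding it back through the system is reasoning in exactly the wrong direction, and the premise typically fails.

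This false premise is then used in the step where you send $z_k\to x_0$ and pass to the limit in~\eqref{le definiting equations} ``using continuity of $F_{\mu_j}$ on $\overline{\C^+}$''; that passage is not licensed. Compounding this, you explicitly flag the uniqueness of the boundary fixed point as ``the main obstacle'' and then do not resolve it. Invoking Julia--Carath\'eodory or Denjoy--Wolff by name does not select the admissible branch; one must actually realize $\omega_j(z)$ as the Denjoy--Wolff point of a $z$-parametrized analytic self-map of $\C^+$ and prove that this Denjoy--Wolff point depends continuously on $z$ up to $\R$, which, crucially, can be done \emph{without} assuming $F_{\mu_j}$ extends continuously. This is precisely Belinschi's mechanism. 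As written, the proposal both assumes a false strengthening of the boundary regularity it seeks to prove and omits the one genuinely hard step of the argument.
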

Belinschi further showed in Theorem~4.1 in~\cite{Bel} that the singular continuous part of $\mu_1\boxplus\mu_2$ is always zero and that the absolutely continuous part, $(\mu_1\boxplus\mu_2)^{\mathrm{ac}}$, of $\mu_1\boxplus\mu_2$ is always nonzero. We denote the density function of $(\mu_1\boxplus\mu_2)^{\mathrm{ac}}$ by $f_{\mu_1\boxplus\mu_2}$.

We are now all set to introduce our notion of {\it regular bulk}, $\mathcal{B}_{\mu_1\boxplus\mu_2}$, of $\mu_1\boxplus\mu_2$. Informally, we let $\mathcal{B}_{\mu_1\boxplus\mu_2}$ be the open set on which $\mu_1\boxplus\mu_2$ has a continuous density that is strictly positive and bounded from above. For a formal definition we first introduce the set
\begin{align}\label{le set U}
\mathcal{U}_{\mu_1\boxplus\mu_2}\deq\mathrm{int}\,\bigg\{\supp (\mu_1\boxplus\mu_2)^{\mathrm{ac}}\,\big\backslash\,\{ x\in \R\,:\, \lim_{\eta\searrow 0}F_{\mu_1\boxplus\mu_2}(x+\ii\eta)=0\} \bigg\}\,.
\end{align}
Note that $\mathcal{U}_{\mu_1\boxplus\mu_2}$ does not contain any atoms of $\mu_1\boxplus\mu_2$. By the Luzin-Privalov theorem the set $ \{ x\in \R\,:\, \lim_{\eta\searrow 0}F_{\mu_1\boxplus\mu_2}(x+\ii\eta)=0\}$ has Lebesgue measure zero. In fact, an even stronger statement applies for the case at hand. Belinschi~\cite{Bel2} showed that if $x\in\R$ is such that $\lim_{\eta\searrow 0}F_{\mu_1\boxplus\mu_2}(x+\ii\eta)=0$, then it must be of the form $x=a+b$ with $\mu_1(\{a\})+\mu_2(\{b\})\ge 1$, $a,b\in\R$. There could only be  finitely many such $x$, thus $\mathcal{U}_{\mu_1\boxplus\mu_2}$ must contain an open non-empty interval.

\begin{proposition}[Theorem~3.3 in~\cite{Bel}]\label{le real analytic prop}
Let $\mu_1$ and $\mu_2$ be as above and fix any $x\in\mathcal{U}_{\mu_1\boxplus\mu_2}$. Then $F_{\mu_1\boxplus\mu_2}$, $\omega_1$, $\omega_2\,:\,\C^+\rightarrow \C^+$ extend analytically around $x$. Thus the density function $f_{\mu_1\boxplus\mu_2}$ is real analytic in $\mathcal{U}_{\mu_1\boxplus\mu_2}$ wherever positive.
\end{proposition}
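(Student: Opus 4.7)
The plan is to apply the holomorphic implicit function theorem to the subordination system at the real point $x$, viewing $(\omega_1,\omega_2)$ as analytic functions of $z$ near $x$. The first step is cheap: Proposition~\ref{prop extension} already gives continuous extensions of $\omega_1,\omega_2,F_{\mu_1\boxplus\mu_2}$ to $\C^+\cup\R$, so the boundary values $\omega_1(x),\omega_2(x),F(x)$ are finite complex numbers and, by the definition of $\mathcal{U}_{\mu_1\boxplus\mu_2}$, we have $F(x)\neq 0$.

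The second, and main, step is to prove that $\im \omega_1(x)>0$ and $\im \omega_2(x)>0$. Suppose for contradiction that $\omega_1(x)=t\in\R$. From the subordination identity in~\eqref{le definiting equations} taken as boundary values, $F_{\mu_2}(t)=F(x)\neq 0$. I would then rule out each possibility for $t$. If $t$ is an atom of $\mu_2$ with mass $p>0$, then $m_{\mu_2}(z)\sim p/(t-z)$ near $t$, forcing $F_{\mu_2}(t)=0$, a contradiction. If $t$ lies outside $\supp\mu_2$, or in $\supp\mu_2$ at a point where the boundary value of $\im m_{\mu_2}$ vanishes on an $\R$-neighborhood, then $F_{\mu_2}$ is real-valued on that neighborhood, so $F(x)\in\R$ and $\im F(x)=0$; but then $\im m_{\mu_1\boxplus\mu_2}=-\im F/|F|^2$ vanishes on a neighborhood of $x$ in~$\R$, contradicting $x\in\mathrm{int}\,\supp(\mu_1\boxplus\mu_2)^{\mathrm{ac}}$. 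The remaining case, $t$ in the continuous spectrum of $\mu_2$ with positive density, is handled using the Nevanlinna representation~\eqref{le neva for omega} of $\omega_1$: $\im\omega_1\equiv 0$ on an interval would force $\varrho_{\omega_1}$ to have no mass there, making $\omega_1$ analytically continue across that interval, and tracking this through~\eqref{le definiting equations} again produces $\im F(x)=0$. The symmetric argument excludes $\omega_2(x)\in\R$.

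Once $\omega_1(x),\omega_2(x)\in\C^+$, the functions $F_{\mu_2}$ and $F_{\mu_1}$ are analytic in complex neighborhoods of $\omega_1(x)$ and $\omega_2(x)$ respectively. I would then apply the holomorphic implicit function theorem to
\[
\Phi_1(\omega_1,\omega_2,z)\deq F_{\mu_1}(\omega_2)-F_{\mu_2}(\omega_1)\,,\qquad \Phi_2(\omega_1,\omega_2,z)\deq \omega_1+\omega_2-z-F_{\mu_1}(\omega_2)\,,
\]
at the point $(\omega_1(x),\omega_2(x),x)$. The Jacobian determinant with respect to $(\omega_1,\omega_2)$ is $F_{\mu_1}'(\omega_2(x))F_{\mu_2}'(\omega_1(x))-F_{\mu_1}'(\omega_2(x))-F_{\mu_2}'(\omega_1(x))$, whose non-vanishing is exactly the quantitative stability statement for the subordination equations established in~\cite{BES15}; qualitatively it follows from the Julia--Carath\'eodory positivity of $F_{\mu_j}'$ at interior points of $\C^+$ (both derivatives lie in the closed right half-plane with strictly positive real part unless the relevant measure is a point mass, which is excluded by hypothesis). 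The implicit function theorem thus produces unique analytic extensions of $\omega_1,\omega_2$ to a complex disk around $x$, which must agree with the pre-existing $\omega_1,\omega_2$ on its intersection with $\C^+$ by uniqueness of analytic continuation.

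Finally, $F_{\mu_1\boxplus\mu_2}=F_{\mu_1}\circ\omega_2$ is analytic near $x$, and since $F(x)\neq 0$, so is $m_{\mu_1\boxplus\mu_2}=-1/F_{\mu_1\boxplus\mu_2}$. For real $y$ in the neighborhood, $f_{\mu_1\boxplus\mu_2}(y)=\frac{1}{\pi}\im m_{\mu_1\boxplus\mu_2}(y)$ is then the imaginary part on $\R$ of a holomorphic function, hence real analytic; where $f$ is positive it is genuinely non-constant, giving the final sentence of the proposition. The principal obstacle is the second step: ruling out the boundary degeneracies $\omega_j(x)\in\R$ requires a careful case analysis of Herglotz boundary behaviour, and it is precisely here that the condition $F(x)\neq 0$ defining $\mathcal{U}_{\mu_1\boxplus\mu_2}$ is used in an essential way.
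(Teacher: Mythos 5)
This proposition is imported verbatim from Belinschi~\cite{Bel} (Theorem~3.3); the paper does not prove it, so the relevant comparison is with the original reference rather than an internal argument. Your scaffolding is the right one and matches Belinschi's: use the continuous extension from Proposition~\ref{prop extension}, argue that the boundary values satisfy $\omega_1(x),\omega_2(x)\in\C^+$, then run the holomorphic implicit function theorem and conclude real analyticity of the density; and you correctly single out $F(x)\neq 0$ --- the defining property of $\mathcal{U}_{\mu_1\boxplus\mu_2}$ --- as the hypothesis that drives the crucial second step.

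That second step, however, is where your argument has a genuine gap. In the case you call ``$t$ in the continuous spectrum of $\mu_2$ with positive density,'' the conclusion $\im F(x)=0$ is simply false: if $\im m_{\mu_2}(t+\ii 0)>0$ then $\im F_{\mu_2}(t)=\im m_{\mu_2}(t)/|m_{\mu_2}(t)|^2>0$, hence $\im F(x)>0$, which is perfectly consistent with $x\in\mathcal{U}_{\mu_1\boxplus\mu_2}$ and yields no contradiction. The supporting line ``$\im\omega_1\equiv 0$ on an interval forces $\varrho_{\omega_1}$ to have no mass there'' assumes an interval hypothesis you do not have (only $\omega_1(x)\in\R$ at the single point~$x$); moreover $\im h(x+\ii 0)=0$ at a point controls only the pointwise absolutely continuous density of the Nevanlinna measure, not its mass near $x$. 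Your case list is also not exhaustive. Belinschi handles all boundary cases at once by applying Julia--Carath\'eodory and Denjoy--Wolff theory to the analytic self-map of $\C^+$ given by $w\mapsto z+(F_{\mu_1}-\mathrm{id})\bigl(z+(F_{\mu_2}-\mathrm{id})(w)\bigr)$, whose attractive fixed point is $\omega_1(z)$; this same framework supplies the correct qualitative reason the Jacobian $(F_{\mu_1}'(\omega_2)-1)(F_{\mu_2}'(\omega_1)-1)-1$ is nonzero, namely the Schwarz--Pick bound $|(F_{\mu_1}'(\omega_2)-1)(F_{\mu_2}'(\omega_1)-1)|<1$ at the interior attractive fixed point (equality would force a M\"obius structure incompatible with $\mu_1,\mu_2$ not being point masses). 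Your alternative justification --- that $F_{\mu_j}'$ has strictly positive real part on $\C^+$ --- is false in general: for $\mu=\tfrac12(\delta_1+\delta_{-1})$ one has $F_\mu(z)=z-1/z$, so $F_\mu'(\tfrac{\ii}{2})=1-4=-3$; and even if it held, it would not rule out $F_{\mu_1}'F_{\mu_2}'=F_{\mu_1}'+F_{\mu_2}'$ (take both derivatives equal to $2$). Finally, citing the stability estimate of~\cite{BES15} as an independent input is circular here, since its derivation already takes Belinschi's analyticity as given.
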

The regular bulk is obtained from $\mathcal{U}_{\mu_1\boxplus\mu_2}$ by removing the zeros of $f_{\mu_1\boxplus\mu_2}$ inside $\mathcal{U}_{\mu_1\boxplus\mu_2}$.
\begin{definition}
The regular bulk of the measure $\mu_1\boxplus\mu_2$ is the set
\begin{align}
 \mathcal{B}_{\mu_1\boxplus\mu_2}\deq\mathcal{U}_{\mu_1\boxplus\mu_2}\,\backslash\, \left\{x\in\mathcal{U}_{\mu_1\boxplus\mu_2}\,:\,f_{\mu_1\boxplus\mu_2}	(x)=0\right\}\,.
\end{align}
\end{definition}
Note that $\mathcal{B}_{\mu_1\boxplus\mu_2}$ is an open nonempty set on which $\mu_1\boxplus\mu_2$ admits the density $f_{\mu_1\boxplus\mu_2}$. The density is strictly positive and thus (by Proposition~\ref{le real analytic prop}) real analytic on $\mathcal{B}_{\mu_1\boxplus\mu_2}$.

\subsection{Definition of the model and assumptions}
Let $A\equiv A^{(N)}$ and $B\equiv B^{(N)}$ be two sequences of deterministic real diagonal matrices in $M_N(\C)$, whose empirical spectral distributions are denoted by $\mu_A$ and $\mu_B$, respectively. More precisely,
\begin{align}\label{le empirical measures of A and B}
 \mu_{A}\deq\frac{1}{N}\sum_{i=1}^N\delta_{a_i}\,,\qquad\qquad\mu_{B}\deq\frac{1}{N}\sum_{i=1}^N\delta_{b_i}\,,
\end{align}
with $A=\mathrm{diag}(a_i)$, $B=\mathrm{diag}(b_i)$. The matrices $A$ and $B$ actually depend on $N$, but we omit this from our notation. Proposition~\ref{le prop 1} asserts the existence of unique analytic functions~$\omega_A$ and $\omega_B$ satisfying the analogue of~\eqref{le limit of omega} such that, for all $z\in\C^+$,
\begin{align}\label{060101}
   F_{\mu_A}(\omega_{B}(z))=F_{\mu_B}(\omega_{A}(z))\,,\qquad\qquad \omega_A(z)+\omega_B(z)-z=F_{\mu_A}(\omega_{B}(z))\,.
\end{align}

We will assume that there are deterministic probability measures $\mu_\alpha$ and $\mu_\beta$ on $\R$, neither of them being a single point mass, such that the empirical spectral distributions $
 \mu_A$ and $\mu_B$ converge weakly, as $N\to\infty$, to $\mu_\alpha$ and $ \mu_\beta$, respectively. More precisely, we assume that 
\begin{align}\label{le assumptions convergence empirical measures}
\dL(\mu_A,\mu_\alpha)+\dL(\mu_B,\mu_\beta)\to 0\,,
\end{align}
as $N\to \infty$, where $\dL$ denotes the L\'evy distance. Proposition~\ref{le prop 1} asserts that there are unique analytic functions $\omega_\alpha$, $\omega_\beta$  satisfying the analogue of~\eqref{le limit of omega} such that, for all $z\in\C^+$,
\begin{align}\label{060102}
F_{\mu_\alpha}(\omega_{\beta}(z))=F_{\mu_\beta}(\omega_{\alpha}(z))\,,\qquad \omega_\alpha(z)+\omega_\beta(z)-z=F_{\mu_\alpha}(\omega_{\beta}(z))\,.
\end{align}
Proposition~4.13 of~\cite{BeV93} states that $\dL(\mu_A\boxplus\mu_B,\mu_\alpha\boxplus\mu_\beta)\le \dL(\mu_A,\mu_\alpha)+\dL(\mu_B,\mu_\beta)$, \ie the free additive convolution is continuous with respect to weak convergence of measures. 

Denote by $ U(N) $ the unitary group of degree $N$. Let $U\in U(N) $ be distributed according to the Haar measure (in short a {\it Haar unitary}), and consider the random matrix
\begin{eqnarray}\label{le our H}
H\equiv H^{(N)}\deq A+UBU^*\,.
\end{eqnarray}
Our results also holds for the real case when $U$ is Haar distributed on the orthogonal group, $O(N)$, of degree $N$. Throughout the main part of the paper the discussion will focus on the unitary case while the orthogonal case is addressed in Appendix~\ref{s.a.a}. 

We introduce the {\it Green function}, $G_H$, of $H$ and its normalized trace, $m_H$, by
\begin{align}\label{le true green function}
 G_H(z)\deq \frac{1}{H-z}\,,\qquad\qquad m_H(z)\deq\ntr G_H(z)\,,\qquad\qquad z\in\C^+\,.
\end{align}
For simplicity, we frequently use the notation $G(z)$ instead of $G_H(z)$ and we write $G_{ij}(z)\equiv (G_H)_{ij}(z)$ for the $(i,j)$th matrix element of $G(z)$.

\subsection{Main results}  For $a,b\ge 0$, $b\ge a$,  and $\mathcal{I}\subset \R$, let
\begin{align}\label{le domain S}
\mathcal{S}_{\mathcal{I}}(a,b)\deq \{z=E+\ii\eta\in\C^+\,:\,E\in \mathcal{I}\,,  a\le \eta\le b\}\,,
\end{align}
In addition, for brevity, we set, for any given $\gamma>0$,
\begin{align}\label{ZG eta}
\eta_{\mathrm{m}}\equiv \eta_{\mathrm{m}}(\gamma)\deq N^{-1+\gamma}. 
\end{align}

The main results of this paper are as follows.
\begin{theorem} \label{thm.091201}
Let $\mu_\alpha$ and $\mu_\beta$ be two compactly supported probability measures on $\R$, and assume that neither is supported at a single point and that at least one of them is supported at more than two points. Assume that the sequences of matrices $A$ and $B$ in~\eqref{le our H} are such that their empirical eigenvalue distributions $\mu_A$ and $\mu_B$ satisfy~\eqref{le assumptions convergence empirical measures}. Let $\mathcal{I}\subset\mathcal{B}_{\mu_\alpha\boxplus\mu_\beta}$ be a nonempty compact interval.

Then, for any fixed $\gamma>0$,  the estimates 
\begin{align}
\max_{1\le i\le N}&\Big|G_{ii}(z)- \frac{1}{a_i-\omega_B(z)}\Big|\prec \frac{1}{\sqrt{N\eta}}\,,\label{0912100}\\
\max_{ i\not= j}&\Big |G_{ij}(z)\Big|\prec \frac{1}{\sqrt{N\eta}} \label{100710}
\end{align}
and 
\begin{align}
\qquad\quad\Big|m_H(z)- m_{\mu_A\boxplus \mu_B}(z)\Big|\prec \frac{1}{\sqrt{N\eta}}\, \label{0913200}
\end{align}
 hold uniformly on $ \mathcal{S}_{\mathcal{I}}(\eta_{\mathrm{m}},1)$ (see~\eqref{le domain S}), where  $\eta\equiv \im z$ and $\eta_\mathrm{m}$ is given in~\eqref{ZG eta}.
\end{theorem}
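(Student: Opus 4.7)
The plan is to prove Theorem~\ref{thm.091201} by combining a partial randomness decomposition of the Haar unitary with the optimal stability of the subordination system proved in~\cite{BES15}. Since~\eqref{0913200} follows by averaging~\eqref{0912100} over~$i$ and using the identity $m_{\mu_A\boxplus\mu_B}(z)=\ntr(A-\omega_B(z))^{-1}$, which is a rewriting of~\eqref{060101}, the core task is to establish the entrywise bound $G_{ii}(z)\approx (a_i-\omega_B(z))^{-1}$ with precision $(N\eta)^{-1/2}$, uniformly on $\mathcal{S}_{\mathcal{I}}(\eta_{\mathrm m},1)$, together with the off-diagonal estimate~\eqref{100710}.

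First I would set up the partial randomness decomposition at a fixed index~$i$. Write $U=R_i U'$, where $R_i$ is a Householder-type reflection built from a Gaussian vector $\mathbf g\sim\mathcal N_{\mathbb C}(0,I_N/N)$ chosen so that, up to a scalar normalization, $R_i\mathbf e_i=\mathbf g$, while $U'$ is Haar distributed and independent of $\mathbf g$. Then $UBU^*=R_i\wt B R_i^*$ with $\wt B\deq U'B(U')^*$: a single Gaussian column of randomness is exposed while the rest is absorbed into the Haar-invariant reference matrix~$\wt B$. Expanding $G(z)=(A+R_i\wt B R_i^*-z)^{-1}$ via a resolvent identity with respect to the rank-two perturbation built from $\mathbf g$ reduces $G_{ii}(z)$ to a rational expression in quadratic forms $\mathbf g^* M\mathbf g$ with~$M$ built from~$\wt B$ and the minors of $G$. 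Gaussian concentration then replaces such quadratic forms by $\ntr M$ up to fluctuations of order $(N\eta)^{-1/2}$, using $\|M\|_2\lesssim (N\eta)^{-1/2}$ via the Ward identity for~$G$.

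The heart of the argument, and the main obstacle, is to show that the resulting traces close in terms of only two scalar observables. A naive expansion would produce a cascade: the equation for $G_{ii}$ involves $\ntr(\wt B G)$, whose expansion involves $\ntr(\wt B G\wt B)$, and so on. I would introduce two auxiliary quantities $\Omega_A(z)$ and $\Omega_B(z)$, defined through normalized traces of $G(z)$ and of $\wt B G(z)$ (or an equivalent pair), and prove an algebraic identity showing that the relevant higher-order traces can be expressed in terms of these two alone. This yields an approximate version of~\eqref{060101} of the form
\begin{equation*}
F_{\mu_A}(\Omega_B(z))-F_{\mu_B}(\Omega_A(z))\prec (N\eta)^{-1/2}\,,\qquad \Omega_A(z)+\Omega_B(z)-z-F_{\mu_A}(\Omega_B(z))\prec (N\eta)^{-1/2}\,.
\end{equation*}
Invoking the optimal stability result of~\cite{BES15}, valid throughout $\mathcal{B}_{\mu_\alpha\boxplus\mu_\beta}$, I then conclude that $\Omega_A(z)=\omega_A(z)+O_\prec((N\eta)^{-1/2})$ and $\Omega_B(z)=\omega_B(z)+O_\prec((N\eta)^{-1/2})$.

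Substituting back gives $\E_i[G_{ii}(z)]=(a_i-\omega_B(z))^{-1}+O_\prec((N\eta)^{-1/2})$, where $\E_i$ denotes expectation over $\mathbf g$ only; a second Gaussian concentration step applied to $G_{ii}-\E_i[G_{ii}]$ yields the same order of fluctuation, proving~\eqref{0912100}. The off-diagonal bound~\eqref{100710} is obtained by the analogous decomposition at a pair of indices, which reduces $G_{ij}$ to a bilinear form $\mathbf g_1^*M\mathbf g_2$ whose Gaussian concentration is of size $(N\eta)^{-1/2}$. The entire argument is set up as a bootstrap: at scale $\eta\sim 1$ the conclusions follow from standard global arguments, and the estimates at scale~$\eta$ serve as the a priori input required to justify the concentration and stability steps at scale~$\eta/2$; iterating $O(\log N)$ times reaches the optimal scale $\eta_{\mathrm m}=N^{-1+\gamma}$. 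The real-symmetric case is handled by the same scheme with a real Gaussian vector and a Haar orthogonal~$U'$, as indicated in Appendix~\ref{s.a.a}.
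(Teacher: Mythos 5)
Your proposal follows essentially the same route as the paper: the partial randomness decomposition $U=-\e{\ii\theta_i}R_iU^{\langle i\rangle}$ exposing a Gaussian vector $\mathbf{g}_i$, Gaussian concentration to replace quadratic/bilinear forms by traces, closure of the self-consistent system via the two approximate subordination functions $\omega_A^c,\omega_B^c$ (the paper realizes this through the pair $S_i,T_i$ and an averaged constraint), input of the optimal stability lemma from~\cite{BES15}, and a bootstrap in $\eta$ from scale~$1$ down to $\eta_{\mathrm m}$. Two points of caution: for the off-diagonal bound the paper again decomposes at the \emph{single} index $i$ and studies $T_{i,j}=\mathbf{g}_i^*G^{(i)}\mathbf{e}_j$, $S_{i,j}=\mathbf{g}_i^*\wt B^{\la i\ra}G^{(i)}\mathbf{e}_j$ with the same two-equation system, rather than a pair-index decomposition giving $\mathbf{g}_1^*M\mathbf{g}_2$ --- the vectors from decompositions at two different indices of the \emph{same} $U$ are not independent, so the pairwise route would require a nested decomposition and is less direct; and the concentration step around $\E_{\mathbf{g}_i}$ is not immediate, because $G^{\la i\ra}_{ii}=(a_i+b_i-z)^{-1}$ can be as large as $1/\eta$, so the paper needs the regularizing shift $H^{\{i\}}(z)=A+\wt B^{(i)}-(b_i+\omega_B(z)-z)\mathbf{e}_i\mathbf{e}_i^*$ to stabilize the $\mathbf{e}_i$-direction before expanding --- a technical ingredient your sketch does not address but would have to supply.
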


 \begin{remark} The assumption that neither of $\mu_\alpha$ and $\mu_\beta$ is a point mass, ensures that the free additive convolution is not a simple translate. The additional assumption that at least one of them is supported at more than two points is for the brevity of the exposition here. In Appendix~\ref{s.a.b}, we present the corresponding result for the special case when $\mu_\alpha$ and $\mu_\beta$ are both convex combinations of two point masses.
\end{remark}

\begin{remark} \label{rem.091801}  We recall from Lemma~5.1 and Theorem~2.7 of~\cite{BES15} that, under the conditions of Theorem~\ref{thm.091201},  there is a finite constant~$C$ such that
\begin{align}\label{le auxiliary statment}
 \max_{z\in \mathcal{S}_{\mathcal{I}}(0,1)}\max\Big\{\big|\omega_A(z)-\omega_{\alpha}(z)\big|, \big|\omega_B(z)-\omega_{\beta}(z)\big|,\big|m_{\mu_A\boxplus\mu_B}(z)-m_{\mu_\alpha\boxplus\mu_\beta}(z)\big|\Big\}\nonumber\\
 \le C\left(\dL(\mu_A,\mu_\alpha)+\dL(\mu_B,\mu_\beta)\right)\,,
\end{align}
\ie  the L\'evy distances of the empirical eigenvalue distributions of $A$ and $B$ from their
limiting distributions control uniformly the deviations of the corresponding
subordination functions and Stieltjes transforms. Note moreover that $\max_{z\in\mathcal{S}_{\mathcal{I}}(0,1)} |m_{\mu_\alpha\boxplus\mu_\beta}(z)|<\infty$ by compactness of $\mathcal{I}$ and analyticity of $m_{\mu_1\boxplus\mu_2}$. Thus the Stieltjes-Perron inversion formula directly implies that $(\mu_{A}\boxplus\mu_{B})^{\mathrm{ac}}$ has a density, $f_{\mu_A\boxplus\mu_B}$, inside $\mathcal{I}$ and that
\begin{align}\label{le uniform pointwise}
 \max_{x\in\mathcal{I}}|f_{\mu_A\boxplus\mu_B}(x)-f_{\mu_\alpha\boxplus\mu_\beta}(x)|\le C\left(\dL(\mu_A,\mu_\alpha)+\dL(\mu_B,\mu_\beta)\right)\,,
\end{align}
for $N$ sufficiently large. In particular, since $\mathcal{I}\subset\mathcal{B}_{\mu_\alpha\boxplus\mu_\beta}$, we have $\mathcal{I}\subset\mathcal{B}_{\mu_A\boxplus\mu_B}$, for $N$ sufficiently large, \ie we use, for large $N$, $\mu_\alpha\boxplus\mu_\beta$ to locate (an interval of) the regular bulk of $\mu_A\boxplus\mu_B$. Finally, combining~\eqref{le auxiliary statment} and~\eqref{0912100}, we get
\begin{align*}
\max_{1\le i\le N}\Big|G_{ii}(z)- \frac{1}{a_i-\omega_\beta(z)}\Big|\prec \frac{1}{\sqrt{N\eta}}+\dL(\mu_A,\mu_\alpha)+\dL(\mu_B,\mu_\beta)\,,
\end{align*}
uniformly on $ \mathcal{S}_{\mathcal{I}}(\eta_{\mathrm{m}},1)$, where $\eta=\im z$. Averaging over the index $i$, we get the corresponding statement for~$|m_H-m_{\mu_A\boxplus\mu_B}|$ with the same error bound.
\end{remark}

\begin{remark}
 
 Note that assumption~\eqref{le assumptions convergence empirical measures} does not exclude that the matrix $H$ has outliers in the large~$N$ limit. In fact, the model $H=A+UBU^*$ shows a rich phenomenology when, say,~$A$ has a finite number of large spikes; we refer to the recent works in~\cite{BBCF,Capitaine13,Kargin}. 

\end{remark}

Let $\lambda_1,\ldots, \lambda_N$  be the eigenvalues of $H$, and $\mathbf{u}_1,\ldots, \mathbf{u}_N$ be the corresponding $\ell^2$-normalized eigenvectors. The following result shows complete delocalization of the bulk eigenvectors.

\begin{theorem}[Delocalization of eigenvectors] \label{thm.091501} Under the assumptions of Theorem~\ref{thm.091201} the following holds. Let $\mathcal{I}\subset\mathcal{B}_{\mu_\alpha\boxplus\mu_\beta}$ be a compact nonempty interval. Then
\begin{align}
\max_{i\,:\,\lambda_i\in \mathcal{I}} \|\mathbf{u}_i\|_{\infty}\prec\frac{1}{\sqrt{N}}\,. \label{091511}
\end{align}
\end{theorem}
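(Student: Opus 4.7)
The plan is the standard spectral-decomposition argument that derives eigenvector delocalization from a diagonal local law. The starting point is the identity
\begin{equation*}
\im G_{ii}(E+\ii\eta) \;=\; \sum_{k=1}^N \frac{\eta\,|\mathbf{u}_k(i)|^2}{(\lambda_k - E)^2 + \eta^2}\,,
\end{equation*}
obtained from the spectral decomposition $G(z)=\sum_k \mathbf{u}_k\mathbf{u}_k^*/(\lambda_k-z)$. Restricting the sum to the single term $k$ and specializing to the spectral parameter $z_k \deq \lambda_k + \ii\eta_{\mathrm{m}}$ with $\eta_{\mathrm{m}}=N^{-1+\gamma}$ yields
\begin{equation*}
|\mathbf{u}_k(i)|^2 \;\le\; \eta_{\mathrm{m}}\,\im G_{ii}(z_k)\,.
\end{equation*}
It therefore suffices to show $\im G_{ii}(z_k)\prec 1$ uniformly in $i\in\llbracket 1,N\rrbracket$ and in every index $k$ with $\lambda_k\in\mathcal{I}$.

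For this I would combine the diagonal local law \eqref{0912100} with a deterministic upper bound on $|a_i-\omega_B(z)|^{-1}$. Since $\mathcal{I}\subset\mathcal{B}_{\mu_\alpha\boxplus\mu_\beta}$, the limiting density $f_{\mu_\alpha\boxplus\mu_\beta}$ is bounded away from zero on $\mathcal{I}$, and the relation $m_{\mu_\alpha\boxplus\mu_\beta}(z)=\int (x-\omega_\beta(z))^{-1}\dd\mu_\alpha(x)$ forces $\im\omega_\beta(z)$ to be uniformly bounded from below by some $c>0$ on $\mathcal{S}_{\mathcal{I}}(0,1)$. By \eqref{le auxiliary statment} in Remark~\ref{rem.091801}, the same holds for $\im\omega_B(z)$ once $N$ is sufficiently large, hence
\begin{equation*}
\max_{1\le i\le N}\sup_{z\in\mathcal{S}_{\mathcal{I}}(0,1)}\frac{1}{|a_i-\omega_B(z)|}\;\le\;\frac{1}{\im\omega_B(z)}\;\le\;\frac{2}{c}\,.
\end{equation*}
Together with \eqref{0912100} this gives $|G_{ii}(z)|\prec 1$, and in particular $\im G_{ii}(z)\prec 1$, uniformly on $\mathcal{S}_{\mathcal{I}}(\eta_{\mathrm{m}},1)$.

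To pass from the uniform-in-$z$ statement to a uniform-in-$k$ statement at the random points $z_k$, I would use a deterministic net: choose an $N^{-10}$-net of $\mathcal{I}$ of polynomial size, apply the uniform local law and the deterministic bound on the net, and then extend to all $z_k$ with $\lambda_k\in\mathcal{I}$ via the trivial Lipschitz bound $|\partial_z G_{ii}(z)|\le \eta^{-2}\le N^{2}$. Combining with the previous inequalities,
\begin{equation*}
\max_{i\,:\,\lambda_i\in\mathcal{I}}\|\mathbf{u}_i\|_{\infty}^2 \;\le\;\max_{k\,:\,\lambda_k\in\mathcal{I}}\max_{1\le j\le N}\eta_{\mathrm{m}}\,\im G_{jj}(z_k)\;\prec\;N^{-1+\gamma}\,.
\end{equation*}
Since $\gamma>0$ is arbitrary, the definition of $\prec$ yields \eqref{091511}.

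The only mildly delicate point is the lower bound $\im\omega_B(z)\ge c$ on the regular bulk; everything else is a direct consequence of Theorem~\ref{thm.091201}. Since the regular bulk is defined precisely so that $f_{\mu_\alpha\boxplus\mu_\beta}$ is continuous and strictly positive there, and since the subordination functions extend continuously up to the real axis by Proposition~\ref{prop extension}, this lower bound is immediate from a compactness argument on $\mathcal{I}$.
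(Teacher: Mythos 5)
Your proposal follows the same spectral-decomposition route as the paper: bound $\|\mathbf{u}_k\|_{\infty}^2/\eta$ from above by a single term of $\Im G_{jj}(\lambda_k+\ii\eta)$, control $\Im G_{jj}\prec 1$ on $\mathcal{S}_{\mathcal{I}}(\eta_{\mathrm{m}},1)$ via the diagonal local law~\eqref{0912100} together with the lower bound $\im\omega_B(z)\ge k$ (which the paper already supplies in Lemma~\ref{cor.080601} rather than re-deriving), and finish by letting $\gamma\searrow 0$. You additionally spell out the lattice/Lipschitz step needed to evaluate the local law at the random points $z_k$; the paper uses this implicitly, since its proof of Theorem~\ref{thm.091201} already establishes the bound on a polynomial lattice and extends by Lipschitz continuity, so the two arguments are essentially identical.
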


\subsection{Strategy of proof} In this subsection, we informally outline the strategy of our proofs. Throughout the paper, without loss of generality, we assume 
\begin{align}
\ntr A=\ntr B=0\,.\label{091072}
\end{align}
For brevity, we use the shorthand $m_{\boxplus}\equiv m_{\mu_A\boxplus\mu_B}$ for the Stieltjes transform of $\mu_A\boxplus\mu_B$.

We consider first the unitary setting. Let 
\begin{align}
H\deq A+UBU^*\,,\qquad \mathcal{H}\deq U^*AU+B\,, \label{091960}
\end{align}
and denote their Green functions by 
\begin{align}
G(z)=(H-z)^{-1}\,,\qquad \mathcal{G}(z)=(\mathcal{H}-z)^{-1}\,,\qquad\qquad z\in\C^+\,. \label{091961}
\end{align}
We write $z=E+\ii\eta\in\C^+$, with $E\in \R$ and $\eta>0$, for the spectral parameter. In the sequel we often omit $z\in\C^+$ from the notation if no confusion can arise. Recalling~\eqref{le true green function}, we have
\begin{align*}
m_H(z)=\ntr G(z)=\ntr \mathcal{G}(z)\,,\qquad\qquad z\in\C^+\,.
\end{align*}
For brevity, we set
\begin{align*}
\wt{A}\deq U^*AU\,,\qquad \wt{B}\deq UBU^*\,.
\end{align*}

The following functions will play a key role in our proof.
\begin{definition}[Approximate subordination functions]
\begin{align}
\omega_A^c(z)\deq z-\frac{\ntr \widetilde{A}\mathcal{G}(z)}{m_H(z)}\,,\qquad\quad \omega_B^c(z)\deq z-\frac{\ntr \wt{B}G(z)}{m_H(z)}\,,\qquad\qquad z\in\C^+\,. \label{091260}
\end{align}
\end{definition}
 Notice that the role of $A$ and $B$ are not symmetric in these notations. By cyclicity of the trace, we may write
\begin{align}
\omega_A^c(z)= z-\frac{\ntr AG(z)}{m_H(z)}\,,\qquad\qquad z\in\C^+\,. \label{091835}
\end{align} 
We remark that the approximate subordination functions defined above are slightly different from the candidate subordination functions introduced in~\cite{Kargin} which were later used in~\cite{BES15}.

The functions $\omega_A^c(z)$ and $\omega_B^c(z)$ turn out to be good approximations to the subordination functions $\omega_A(z)$ and $\omega_B(z)$ of~\eqref{060101}. A direct consequence of the definition in~\eqref{091260}~is that
\begin{align}
\frac{1}{m_H(z)}=z-\omega_A^c(z)-\omega_B^c(z)\,,\qquad\qquad z\in\C^+\,.\label{091850}
\end{align}

Having set the notation, our main task is to show that
\begin{align}
G_{ii}(z)=\big(a_i-\omega_B^c(z)\big)^{-1}+\OSD \Big(\frac{1}{\sqrt{N\eta}}\Big)\,,\qquad\qquad z\in \mathcal{S}_{\mathcal{I}}(\eta_{\mathrm{m}},1)\,,\label{091811}
\end{align}
where we focus, for simplicity, on the diagonal Green function entries only.

We first heuristically explain how~\eqref{091811} leads to our main result in~\eqref{0912100}. A key input is the local stability of the system~\eqref{060101} established in~\cite{BES15}; see Subsection~\ref{s.4} for a summary. Averaging over $i$ in~\eqref{091811}, we get 
\begin{align}
m_H(z)=m_A(\omega_B^c(z))+\OSD\Big(\frac{1}{\sqrt{N\eta}}\Big)\,. \label{091840}
\end{align}
Replacing $H$ by $\mathcal{H}$, we analogously get 
\begin{align}
m_H(z)=m_B(\omega_A^c(z))+\OSD\Big(\frac{1}{\sqrt{N\eta}}\Big)\,, \label{091841}
\end{align}
according to~\eqref{091835}. Substituting ~\eqref{091850} into~\eqref{091840} and~\eqref{091841} we obtain the system
\begin{align*}
&F_{\mu_A}(\omega_B^c(z))=\omega_A^c(z)+\omega_B^c(z)-z+\OSD\Big(\frac{1}{\sqrt{N\eta}}\Big),\nonumber\\
 &F_{\mu_B}(\omega_A^c(z))=\omega_A^c(z)+\omega_B^c(z)-z+\OSD\Big(\frac{1}{\sqrt{N\eta}}\Big),
\end{align*}
which is a perturbation of~\eqref{060101}. Using the local stability of the system~\eqref{060101}, we obtain
\begin{align}
\big|\omega_A^c(z)-\omega_A(z)\big|\prec \frac{1}{\sqrt{N\eta}}\,,\qquad \qquad \big|\omega_B^c(z)-\omega_B(z)\big|\prec \frac{1}{\sqrt{N\eta}}\,.
\end{align}
Plugging this estimate back into~\eqref{091811} we get~\eqref{0912100}. The full proof of this step is accomplished in Section~\ref{s.7}.

We now return to~\eqref{091811}. Its proof relies on the following decomposition of Haar measure on the unitary group given, \eg in~\cite{DS87, Mezzadri}. For any fixed $i\in\llbracket 1,N\rrbracket$, any Haar unitary $U$ can be written as 
\begin{align}\label{le first decomposition}  
U=-\e{\mathrm{i}\theta_i}R_i\,U^{\langle i\rangle}\,.
\end{align}
Here $R_i$ is the {\it Householder reflection} (up to a sign) sending the vector $\bs{e}_i$ to $\bs{v}_i$, where $\bs{v}_i\in\C^N$ is a random vector distributed uniformly on the complex unit $(N-1)$-sphere, and $\theta_i\in [0,2\pi)$ is the argument of the $i$th coordinate of $\mathbf{v}_i$. The unitary matrix $U^{\langle i\rangle}$ has $\bs{e}_i$ as its $i$th column and its $(i,i)$-matrix minor (obtained by removing the $i$th column and $i$th row) is Haar distributed on $U(N-1)$; see Section~\ref{s.5} for more detail.

The gist of the decomposition in~\eqref{le first decomposition} is that the Householder reflection $R_i$ and the unitary $U^{\langle i\rangle}$ are independent, for each fixed $i\in\llbracket 1,N\rrbracket$. Hence, the decomposition in~\eqref{le first decomposition} allows one to split off the partial randomness of the vector~$\mathbf{v}_i$ from~$U$.

The proof of~\eqref{091811} is divided into two parts: 
\begin{itemize}[noitemsep,topsep=0pt,partopsep=0pt,parsep=0pt]
\item[$(i)$] Concentration of $G_{ii}$ around the partial expectation $\mathbb{E}_{\mathbf{v}_i}[G_{ii}]$, \ie $$|G_{ii}-\mathbb{E}_{\mathbf{v}_i}[G_{ii}]|\prec \frac{1}{\sqrt{N\eta}}\,.$$
\item[$(ii)$] Computation of the partial expectation $\mathbb{E}_{\mathbf{v}_i}[G_{ii}]$, \ie $$|\mathbb{E}_{\mathbf{v}_i}\big[G_{ii}(z)\big]-(a_i-\omega_B^c(z)\big)^{-1}|\prec \frac{1}{\sqrt{N\eta}}\,.$$
\end{itemize}

To prove part $(i)$, we resolve dependences by expansion and use concentration estimates for the vector $\bs{v}_i$. This part is accomplished in Section~\ref{s.6}.

Part~$(ii)$ is carried out in Section~\ref{section partial concentration}. We start from the Green function identity
\begin{align}
(a_i-z)G_{ii}(z)=-(\wt{B}G(z))_{ii}+1\,. \label{0922102}
\end{align}
Taking the $\mathbb{E}_{\bs{v}_i}$ expectation of~\eqref{0922102} and recalling the definition of the approximate subordination function $\omega_B^c(z)$ in~\eqref{091260}, it suffices to show that
\begin{align*}
\mathbb{E}_{\mathbf{v}_i}\big[(\wt{B}G)_{ii}\big]=\frac{\ntr \wt{B}G}{\ntr G} G_{ii}+\OSD\Big(\frac{1}{\sqrt{N\eta}}\Big)\,,
\end{align*}
 to prove~\eqref{091811}.
Denoting $\wt{B}^{\la i\ra}\deq U^{\la i\ra}B(U^{\la i\ra})^*$ and setting, for $z\in\C^+$,
\begin{align*}
S_i^{\sharp}(z)\deq \e{\mathrm{i}\theta_i}\mathbf{v}_i^*\wt{B}^{\la i\ra}G(z)\mathbf{e}_i\,,\qquad\qquad T_i^{\sharp}(z)\deq\e{\mathrm{i}\theta_i}\mathbf{v}_i^*G(z)\mathbf{e}_i\,,
\end{align*} 
we will prove that
\begin{align}
\mathbb{E}_{\mathbf{v}_i}\big[(\wt{B}G(z))_{ii}\big]=-\mathbb{E}_{\mathbf{v}_i}\big[S_i^{\sharp}(z)\big]+\OSD\Big(\frac{1}{\sqrt{N}}\Big)\,.\label{0922101}
\end{align}
Hence, it suffices to estimate $\mathbb{E}_{\mathbf{v}_i}\big[S_i^{\sharp}]$ instead. Approximating $\e{-\mathrm{i}\theta_i}\mathbf{v}_i$ by a Gaussian vector and using integration by parts for Gaussian random variables, we get the pair of equations
\begin{align*}
\mathbb{E}_{\mathbf{v}_i}\big[S_i^{\sharp}\big]&=\ntr \big( \widetilde{B}G\big)\,\big(\mathbb{E}_{\mathbf{v}_i}\big[S_i^{\sharp}\big]-b_i \, \mathbb{E}_{\mathbf{v}_i}\big[T_i^{\sharp}\big]\big)+\ntr  \big(\widetilde{B}G\widetilde{B}\big)\, \big(G_{ii}+\mathbb{E}_{\mathbf{v}_i}\big[T_i^{\sharp}\big]\big)+\OSD\Big(\frac{1}{\sqrt{N\eta}} \Big)\,,  \nonumber\\
\mathbb{E}_{\mathbf{v}_i}\big[T_i^{\sharp}\big]&=\ntr \big(G\big) \,\big(\mathbb{E}_{\mathbf{v}_i}\big[S_i^{\sharp}\big]- b_i \; \mathbb{E}_{\mathbf{v}_i}\big[T_i^{\sharp}\big]\big)+\ntr  \big(\widetilde{B}G\big)\, \big(G_{ii}+\mathbb{E}_{\mathbf{v}_i}\big[T_i^{\sharp}\big]\big)+\OSD\Big( \frac{1}{\sqrt{N\eta}} \Big)\,,  
\end{align*}
where we dropped the $z$-argument for the sake of brevity; see~\eqref{091120} and~\eqref{091121} for precise statements with slightly modified $S_i^\#$ and $T_i^\#$. Solving $\mathbb{E}_{\mathbf{v}_i}\big[S_i^{\sharp}\big]$ from the above two equations, we arrive at
\begin{align}
\mathbb{E}_{\mathbf{v}_i}\big[S_i^{\sharp}\big] &=-\frac{\ntr  \big(\widetilde{B}G\big)}{\ntr G} G_{ii}+\bigg[\frac{\ntr\big( \widetilde{B}G\big)-\big(\ntr\big(\widetilde{B}G\big)\big)^2}{\ntr G}+\ntr \big( \widetilde{B}G\widetilde{B}\big)\bigg]\big(G_{ii}+\mathbb{E}_{\mathbf{v}_i}\big[T_i^{\sharp}\big]\big)\nonumber\\ &\qquad\qquad+\OSD\Big(\frac{1}{\sqrt{N\eta}} \Big)\,. \label{0922105}
\end{align}
Returning to~\eqref{0922101}, we also obtain, using concentration estimates for $(\wt{B}G)_{ii}$ (which follow from the concentration estimates of $G_{ii}$ established in part $(i)$ and~\eqref{0922102}), that 
\begin{align}
\Big|\frac{1}{N}\sum_{i=1}^N\mathbb{E}_{\mathbf{v}_i}\big[S_i^{\sharp}\big]+\ntr \wt{B}G\Big|\prec \frac{1}{\sqrt{N\eta}}\,.\label{0922115}
\end{align}
Thus,  averaging~\eqref{0922105} over the index~$i$ and comparing with~\eqref{0922115}, we conclude that

\begin{align*}
\Big|\frac{\ntr\big( \widetilde{B}G\big)-\big(\ntr\big(\widetilde{B}G\big)\big)^2}{\ntr G}+\ntr \big( \widetilde{B}G\widetilde{B}\big)\Big|\prec \frac{1}{\sqrt{N\eta}}\,.
\end{align*}
Plugging this last estimate back into~\eqref{0922105}, we eventually find that
\begin{align*}
\mathbb{E}_{\mathbf{v}_i}\big[S_i^{\sharp}\big] =-\frac{\ntr  \big(\widetilde{B}G\big)}{\ntr G} G_{ii}+\OSD\Big(\frac{1}{\sqrt{N\eta}} \Big)\,,
\end{align*}
which together with~\eqref{0922101} and~\eqref{0922102} gives us part $(ii)$. This completes the sketch of the proof for the unitary case. The proof of the orthogonal case is similar. The necessary modifications are given in Appendix~\ref{s.a.a}.

\section{Preliminaries}
In this section, we first collect some basic tools used later on and then summarize results of~\cite{BES15}. In particular, we discuss, under the assumptions of Theorem~\ref{thm.091201}, stability properties of the system~\eqref{060101} and state essential properties of the subordination functions~$\omega_A$ and~$\omega_B$.

\subsection{Stochastic domination and large deviation properties}\label{stochastic domination section}
Recall the definition of stochastic domination in Definition~\ref{definition of stochastic domination}. The relation $\prec$ is a partial ordering: it is transitive and it satisfies the arithmetic rules of an order relation, {\it e.g.}, if $X_1\prec Y_1$ and $X_2\prec Y_2$ then $X_1+X_2\prec Y_1+Y_2$ and $X_1 X_2\prec Y_1 Y_2$. Further assume that $\Phi(v)\ge N^{-C}$ is deterministic and that~$Y(v)$ is a nonnegative random variable satisfying $\E [Y(v)]^2\le N^{C'}$ for all~$v$. Then $Y(v) \prec \Phi(v)$, uniformly in $v$, implies $\E [Y(v)] \prec \Phi(v)$, uniformly in~$v$.

Gaussian vectors have well-known large deviation properties. We will use them in the following form
whose proof is standard.  
\begin{lemma} \label{lem.091720} Let $X=(x_{ij})\in M_N(\C)$ be a deterministic matrix and let $\bs{y}=(y_{i})\in\C^N$ be a deterministic complex vector. For a Gaussian random vector $\mathbf{g}=(g_1,\ldots, g_N)\in \mathcal{N}_{\mathbb{R}}(0,\sigma^2 I_N)$ or $\mathcal{N}_{\mathbb{C}}(0,\sigma^2 I_N)$, we have
 \begin{align}\label{091731}
  |\bs{y}^* \bs{g}|\prec\sigma \|\bs{y}\|_2\,,\qquad\qquad  |\bs{g}^* X\bs{g}-\sigma^2N \ntr X|\prec \sigma^2\| X\|_2\,.
 \end{align}
\end{lemma}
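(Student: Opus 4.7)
The plan is to reduce both estimates to standard Gaussian concentration.

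For the first inequality, observe that $\bs{y}^*\bs{g}=\sum_i \bar y_i g_i$ is a centered scalar Gaussian (real if $\bs{g}\sim\mathcal{N}_\R$, complex if $\bs{g}\sim\mathcal{N}_\C$) with variance $\sigma^2\|\bs{y}\|_2^2$. A standard Gaussian tail bound yields
\[
\P\big(|\bs{y}^*\bs{g}|>N^\epsilon\sigma\|\bs{y}\|_2\big)\le 2\exp(-cN^{2\epsilon}),
\]
for some absolute $c>0$, which is much stronger than the polynomial decay required by Definition~\ref{definition of stochastic domination} and hence gives $|\bs{y}^*\bs{g}|\prec \sigma\|\bs{y}\|_2$.

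For the second inequality, I would first identify the mean: using $\E[\bar g_i g_j]=\sigma^2\delta_{ij}$ in the complex case and $\E[g_ig_j]=\sigma^2\delta_{ij}$ in the real case, one gets $\E[\bs{g}^*X\bs{g}]=\sigma^2\mathrm{Tr}(X)=\sigma^2 N\ntr X$. The centered quadratic form then falls under the Hanson--Wright inequality, which gives a $c>0$ such that for every $t>0$,
\[
\P\big(|\bs{g}^*X\bs{g}-\sigma^2 N\ntr X|>t\big)\le 2\exp\Big(-c\min\Big(\tfrac{t^2}{\sigma^4\|X\|_2^2},\,\tfrac{t}{\sigma^2\|X\|}\Big)\Big).
\]
Setting $t=N^\epsilon\sigma^2\|X\|_2$ and using $\|X\|\le\|X\|_2$, both branches of the minimum exceed $N^{2\epsilon}$, so the tail is again super-polynomially small, which yields $|\bs{g}^*X\bs{g}-\sigma^2 N\ntr X|\prec\sigma^2\|X\|_2$.

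If one prefers not to invoke Hanson--Wright as a black box, the same bound can be obtained elementarily: decompose $X$ into Hermitian and anti-Hermitian parts, use unitary (resp.\ orthogonal) invariance of the distribution of $\bs{g}$ to diagonalize each part, and reduce to controlling a linear combination $\sum_i\lambda_i(|g_i|^2-\sigma^2)$ of independent centered sub-exponential variables by a direct moment computation and Markov's inequality. The only real bookkeeping issue is keeping the real and complex cases in parallel and ensuring that it is the Hilbert--Schmidt norm $\|X\|_2$ rather than the operator norm that appears on the right-hand side; neither step presents a genuine obstacle, which is why the lemma is stated without proof in the paper.
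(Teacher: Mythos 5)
Your proposal is correct and is exactly the standard argument the paper has in mind when it says the proof is standard (the paper itself gives no proof of this lemma): a Gaussian tail bound for the linear form and Hanson--Wright (or, equivalently, the elementary spectral reduction you sketch) for the quadratic form. One small slip: with $t=N^\epsilon\sigma^2\|X\|_2$, the sub-exponential branch $t/(\sigma^2\|X\|)=N^\epsilon\|X\|_2/\|X\|$ is only bounded below by $N^\epsilon$, not $N^{2\epsilon}$, but this is harmless since $\min(N^{2\epsilon},N^\epsilon)=N^\epsilon$ still yields a super-polynomially small tail and hence the $\prec$ relation.
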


\subsection{Rank-one perturbation formula}
At various places, we use the following fundamental perturbation formula: for $\bs{\alpha},\bs{\beta}\in\C^N$ and an invertible $D\in M_N(\C)$, we have
\begin{align}
\big(D+\bs{\alpha}\bs{\beta}^*\big)^{-1}=D^{-1}-\frac{D^{-1}\bs{\alpha}\bs{\beta}^*D^{-1}}{1+\bs{\beta}^*D^{-1}\bs{\alpha}}\,, \label{091002Kevin}
\end{align}
as can be checked readily. A standard application of~\eqref{091002Kevin} is recorded in the following lemma.
\begin{lemma}
Let $D\in M_N(\C)$ be Hermitian and let $Q\in M_N(\C)$ be arbitrary. Then, for any finite-rank Hermitian matrix $R\in M_N(\C)$, we have
\begin{align}
\left|\ntr \left(Q\big(D+R-z\big)^{-1}\right)-\ntr \left(Q(D-z)^{-1}\right)\right| &\leq \frac{\mathrm{rank}(R)\|Q\|}{N\eta}\,,\qquad z=E+\ii\eta\in\C^+\,.
 \label{091002}
\end{align}
\end{lemma}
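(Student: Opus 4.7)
The plan is to reduce the general finite-rank case to the rank-one case by a telescoping argument, and then to exploit the Sherman–Morrison-type identity~\eqref{091002Kevin} together with a key cancellation between the numerator and denominator coming from the Hermiticity of $D$.

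First I would spectrally decompose $R$. Since $R$ is Hermitian of rank $r \deq \mathrm{rank}(R)$, there exist real $\lambda_1,\ldots,\lambda_r$ and an orthonormal family $\mathbf{v}_1,\ldots,\mathbf{v}_r\in\C^N$ with $R=\sum_{k=1}^r \lambda_k\mathbf{v}_k\mathbf{v}_k^*$. Set $D_0\deq D$ and $D_k\deq D_{k-1}+\lambda_k\mathbf{v}_k\mathbf{v}_k^*$, so that each $D_k$ is Hermitian and $D_r=D+R$. Writing the difference as a telescoping sum reduces the claim to the bound
\begin{align*}
\bigl|\ntr \bigl(Q(D_{k-1}+\lambda_k\mathbf{v}_k\mathbf{v}_k^*-z)^{-1}\bigr)-\ntr \bigl(Q(D_{k-1}-z)^{-1}\bigr)\bigr|\,\le\,\frac{\|Q\|}{N\eta}\,,
\end{align*}
for each $k\in\llbracket 1,r\rrbracket$; summing over $k$ then yields the desired estimate.

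For the rank-one step, I apply~\eqref{091002Kevin} with $\bs{\alpha}=\lambda_k\mathbf{v}_k$, $\bs{\beta}=\mathbf{v}_k$, $D\leftarrow D_{k-1}-z$, to get
\begin{align*}
(D_{k-1}+\lambda_k\mathbf{v}_k\mathbf{v}_k^*-z)^{-1}-(D_{k-1}-z)^{-1}=-\frac{\lambda_k(D_{k-1}-z)^{-1}\mathbf{v}_k\mathbf{v}_k^*(D_{k-1}-z)^{-1}}{1+\lambda_k\mathbf{v}_k^*(D_{k-1}-z)^{-1}\mathbf{v}_k}\,.
\end{align*}
Taking $\ntr(Q\,\cdot\,)$ and using cyclicity of the trace, the modulus of the left side equals
\begin{align*}
\frac{|\lambda_k|}{N}\cdot\frac{\bigl|\mathbf{v}_k^*(D_{k-1}-z)^{-1}Q(D_{k-1}-z)^{-1}\mathbf{v}_k\bigr|}{\bigl|1+\lambda_k\mathbf{v}_k^*(D_{k-1}-z)^{-1}\mathbf{v}_k\bigr|}\,.
\end{align*}
For the numerator I apply Cauchy–Schwarz to bound it by $\|Q\|\cdot\|(D_{k-1}-z)^{-1}\mathbf{v}_k\|_2^{\,2}$. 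For the denominator I use that $D_{k-1}$ is Hermitian, so the spectral theorem yields the standard identity
\begin{align*}
\im\bigl(\mathbf{v}_k^*(D_{k-1}-z)^{-1}\mathbf{v}_k\bigr)=\eta\,\|(D_{k-1}-z)^{-1}\mathbf{v}_k\|_2^{\,2}\,,
\end{align*}
so that $\bigl|1+\lambda_k\mathbf{v}_k^*(D_{k-1}-z)^{-1}\mathbf{v}_k\bigr|\ge|\lambda_k|\,\eta\,\|(D_{k-1}-z)^{-1}\mathbf{v}_k\|_2^{\,2}$ (this is where Hermiticity of $D_{k-1}$ is essential, and it handles both signs of $\lambda_k$ uniformly).

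Plugging these two bounds into the displayed ratio, the factors $\|(D_{k-1}-z)^{-1}\mathbf{v}_k\|_2^{\,2}$ and $|\lambda_k|$ cancel, leaving $\|Q\|/(N\eta)$, as required. Summing the $r$ telescoping contributions gives the stated inequality. The only conceptual point is the cancellation in the last step; otherwise the argument is purely algebraic and requires no additional machinery.
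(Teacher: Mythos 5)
Your proof is correct and follows essentially the same route as the paper: telescope the finite-rank perturbation to a sum of rank-one Hermitian increments, apply the Sherman--Morrison identity~\eqref{091002Kevin}, bound the numerator by Cauchy--Schwarz and the denominator from below by its imaginary part using Hermiticity of the background matrix, and observe the cancellation. The only cosmetic difference is that the paper absorbs the sign and scale of each rank-one increment into a single vector, writing $R$ as a sum of $\pm\bs{\alpha}\bs{\alpha}^*$ and working with $\bs{\alpha}^*\Im(D-z)^{-1}\bs{\alpha}$, whereas you keep the eigenvalue $\lambda_k$ explicit with a unit eigenvector — these are the same computation.
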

\begin{proof}
Let $z\in\C^+$ and $\bs{\alpha}\in\C^N$. Then from~\eqref{091002Kevin} we have
\begin{align}\label{091002000}
\ntr \Big(Q\big(D\pm \bs{\alpha}\bs{\alpha}^*-z\big)^{-1}\Big)-\ntr \Big(Q(D-z)^{-1} \Big)=\pm\frac{1}{N}\frac{\bs{\alpha}^*(D-z)^{-1}Q(D-z)^{-1}\bs{\alpha}}{1\pm\bs{\alpha}^*(D-z)^{-1}\bs{\alpha}}\,.
\end{align}
We can thus estimate
\begin{align}
\Big|\ntr \Big(Q\big(D\pm\bs{\alpha}\bs{\alpha}^*-z\big)^{-1}\Big)-\ntr \Big(Q(D-z)^{-1} \Big) \Big|&\leq \frac{\|Q\|}{N}\frac{\|(D-z)^{-1}\bs{\alpha}\|_2^2}{\big|1\pm \bs{\alpha}^*(D-z)^{-1}\bs{\alpha}\big|}\nonumber\\&= \frac{\|Q\|}{N\eta}\frac{\bs{\alpha}^* \Im (D-z)^{-1}\bs{\alpha}}{\big|1\pm\bs{\alpha}^*(D-z)^{-1}\bs{\alpha}\big|}\nonumber\\ &\leq \frac{\|Q\|}{N\eta}\,. 
 \label{0910021111}
\end{align}
Since $R=R^*\in M_N(\C)$ has finite rank, we can write $R$ as a finite sum of rank-one Hermitian matrices of the form $\pm \bs{\alpha}\bs{\alpha}^*$. Thus iterating ~\eqref{0910021111} we get~\eqref{091002}.
\end{proof}

\subsection{Local stability of the system~\eqref{060101}} \label{s.4}
We first consider~\eqref{le definiting equations} in a general setting: For generic probability measures $\mu_1,\mu_2$, let $\PP_{\mu_1,\mu_2}\,:\, (\C^+)^{3}\rightarrow \C^2$ be given by
\begin{align}\label{le H system defs}
\PP_{\mu_1,\mu_2}(\omega_1,\omega_2,z)\deq\left(\begin{array}{cc} F_{\mu_1}(\omega_2)-\omega_1-\omega_2+z \\ F_{\mu_2}(\omega_1)-\omega_1-\omega_2+z \end{array}\right)\,,
\end{align}
where $F_{\mu_1}$, $F_{\mu_2}$ are the negative reciprocal Stieltjes transforms of $\mu_1$, $\mu_2$; see~\eqref{le F definition}. 
Considering $\mu_1,\mu_2$ as fixed, the equation
\begin{align}\label{le H system}
\PP_{\mu_1,\mu_2}(\omega_1,\omega_2,z)=0\,,
\end{align}
is equivalent to~\eqref{le definiting equations} and, by Proposition~\ref{le prop 1}, there are unique analytic functions $\omega_1,\omega_2\,:\, \C^+\rightarrow \C^+$, $z\mapsto \omega_1(z),\omega_2(z)$ satisfying~\eqref{le limit of omega} that solve~\eqref{le H system} in terms of $z$. Choosing $\mu_1=\mu_\alpha$, $\mu_2=\mu_\beta$ equation~\eqref{le H system} is equivalent to~\eqref{060102}; choosing $\mu_1=\mu_A$, $\mu_2=\mu_B$ it is equivalent to~\eqref{060101}. When no confusion can arise, we simply write~$\PP$ for $\PP_{\mu_1,\mu_2}(\omega_1,\omega_2,z)$. 

We call the system~\eqref{le H system} {\it linearly $S$-stable at} $(\omega_1,\omega_2)$ if
\begin{align}\label{le what stable means}
\Gamma_{\mu_1,\mu_2}(\omega_1,\omega_2)\deq \left\|\left(\begin{array}{cc}
-1& F_{\mu_1}'(\omega_2)-1  \\
F_{\mu_2}'(\omega_1)-1& -1\\
  \end{array}\right)^{-1} \right\|\le S\,,
\end{align}
 for some positive constant $S$.  In particular, the partial Jacobian matrix of~\eqref{le H system defs} given by
\begin{align*}
 \mathrm{D}\PP(\omega_1,\omega_2)\deq\left(\frac{\partial \PP}{\partial \omega_1}(\omega_1,\omega_2,z) \,,\,\frac{\partial \PP}{\partial\omega_2}(\omega_1,\omega_2,z) \right)=\left(\begin{array}{cc}
-1& F_{\mu_1}'(\omega_2)-1  \\
F_{\mu_2}'(\omega_1)-1 & -1\\
  \end{array}\right),
\end{align*}
has a bounded inverse at $(\omega_1,\omega_2)$. Note that $ \mathrm{D}\PP(\omega_1,\omega_2)$ is independent of $z$. The implicit function theorem reveals that, if~\eqref{le H system} is linearly $S$-stable at  $(\omega_1,\omega_2)$, then
\begin{align}\label{091460}
 \max_{z\in \mathcal{S}_{\mathcal{I}}(0,1)}|\omega_1'(z)|\le 2S\,,\qquad  \max_{z\in \mathcal{S}_{\mathcal{I}}(0,1)}|\omega_2'(z)|\le 2S\,.
\end{align}
In particular, $\omega_1$ and $\omega_2$ are Lipschitz continuous with constant $2S$. A more detailed analysis yields the following local stability result of the system $\PP_{\mu_1,\mu_2}(\omega_1,\omega_2,z)=0$.

\begin{lemma}[Proposition~4.1, \cite{BES15}]\label{le lemma perturbation of system}
Fix $z_0\in\C^+$. Assume that the functions $\widetilde\omega_1$, $\widetilde\omega_2$, $\widetilde{r}_1$, $\widetilde{r}_2\,:\,\C^+\rightarrow \C$ satisfy $\im\widetilde\omega_1(z_0)>0$, $\im\widetilde\omega_2(z_0)>0$ and 
 \begin{align}\label{la perturbed system}
 \PP_{\mu_1,\mu_2}(\widetilde\omega_1(z_0),\widetilde\omega_2(z_0),z_0)=\widetilde r(z_0)\,,
 \end{align}
where $\widetilde{r}(z)\deq(\widetilde r_1(z),\widetilde r_2(z))^\top$. Assume moreover that there is $\delta\in[0,1]$ such that
 \begin{align}\label{le apriori closeness}
 |\widetilde\omega_1(z_0)-\omega_1(z_0)|\le \delta\,,\qquad |\widetilde\omega_2(z_0)-\omega_2(z_0)|\le \delta\,,  
 \end{align}
 where $\omega_1(z)$, $\omega_2(z)$ solve the unperturbed system $\PP_{\mu_1,\mu_2}(\omega_1,\omega_2,z)=0$ with $\im \omega_1(z)\ge \im z$ and $\im \omega_2(z)\ge z$, $z\in\C^+$. 
 Assume that there is a constant $S$ such that $\PP$ is linearly $S$-stable at $(\omega_1(z_0),\omega_2(z_0))$, and assume in addition that there are strictly positive constants $K$ and $k$ with $k>\delta$ and with $k^2>\delta KS$ such that
\begin{align}\label{le stability sum ims}
k\le \im \omega_1(z_0)\le K\,,\qquad k\le \im \omega_2(z_0)\le K \,.
 \end{align}

 Then
\begin{align}\label{le conclusion of lemma}
 |\widetilde\omega_1(z_0)-\omega_1(z_0)|\le  2S \|\widetilde{r}(z_0)\|_2\,,\qquad|\widetilde\omega_2(z_0)-\omega_2(z_0)|\le  2S \|\widetilde{r}(z_0)\|_2\,.
\end{align}
\end{lemma}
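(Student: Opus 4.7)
My approach is a quantitative implicit-function argument at the single spectral parameter $z_0$: linearize, bound the nonlinear remainder, and invert the Jacobian using the $S$-stability hypothesis.

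\emph{Step 1: Linearize.} Since $\PP_{\mu_1,\mu_2}(\omega_1(z_0),\omega_2(z_0),z_0)=0$ by the very definition of the subordination functions, subtracting this from the perturbed identity \eqref{la perturbed system} gives
\[
\PP_{\mu_1,\mu_2}(\widetilde\omega_1(z_0),\widetilde\omega_2(z_0),z_0)-\PP_{\mu_1,\mu_2}(\omega_1(z_0),\omega_2(z_0),z_0)=\widetilde r(z_0).
\]
I would Taylor-expand each coordinate of $\PP$ in its first two arguments about $(\omega_1(z_0),\omega_2(z_0))$ to first order. Since $\PP$ depends on $\omega_1,\omega_2$ only through $F_{\mu_1}(\omega_2)$, $F_{\mu_2}(\omega_1)$ and linear terms, the linear part is exactly $\mathrm{D}\PP(\omega_1(z_0),\omega_2(z_0))\cdot(\widetilde\omega(z_0)-\omega(z_0))$, and the remainder is controlled by the second derivatives $F_{\mu_1}''$ and $F_{\mu_2}''$ evaluated along segments connecting $\omega_j(z_0)$ to $\widetilde\omega_j(z_0)$, multiplied by $|\widetilde\omega_j(z_0)-\omega_j(z_0)|^2$.

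\emph{Step 2: Control the nonlinear remainder.} To estimate $F_{\mu_j}''$, I would use analyticity of $F_{\mu_j}$ on $\C^+$ together with the Nevanlinna representation (giving $|F_{\mu_j}(\zeta)|\lesssim |\zeta|+C$ with $C$ determined by the support of $\mu_j$) and Cauchy's integral formula on a disc centered at $\omega_j(z_0)$. The condition $k\le\im\omega_j(z_0)\le K$ together with $k>\delta$ (which keeps the segment from $\omega_j(z_0)$ to $\widetilde\omega_j(z_0)$ safely inside $\C^+$) yields a bound of the form $|F_{\mu_j}''|\lesssim K/k^3$ on the segment. Hence the quadratic remainder in the expansion is bounded by $C(K/k^3)\,\|\widetilde\omega(z_0)-\omega(z_0)\|_2^2$, and using \eqref{le apriori closeness} one factor of $\|\widetilde\omega-\omega\|_2$ may be replaced by $\delta$, giving a remainder of size at most $C(K\delta/k^3)\,\|\widetilde\omega(z_0)-\omega(z_0)\|_2$.

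\emph{Step 3: Invert and absorb.} Applying $\mathrm{D}\PP^{-1}$, whose norm is bounded by $S$ thanks to \eqref{le what stable means}, produces
\[
\|\widetilde\omega(z_0)-\omega(z_0)\|_2\le S\,\|\widetilde r(z_0)\|_2+C S(K\delta/k^3)\,\|\widetilde\omega(z_0)-\omega(z_0)\|_2.
\]
The smallness hypothesis $k^2>\delta K S$ (possibly with a harmless constant absorbed into the precise form of the remainder bound) guarantees that the coefficient of $\|\widetilde\omega-\omega\|_2$ on the right is at most $\tfrac12$, so the second term can be absorbed on the left, producing $\|\widetilde\omega(z_0)-\omega(z_0)\|_2\le 2S\|\widetilde r(z_0)\|_2$. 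Reading off each coordinate yields \eqref{le conclusion of lemma}.

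\emph{Main obstacle.} The only genuinely delicate point is getting the Taylor-remainder estimate in step 2 tight enough that the absorption threshold in step 3 matches the stated hypothesis $k^2>\delta K S$. This requires using the specific structure of $F_{\mu_j}$ (either its Nevanlinna form or the fact that $m_{\mu_j}(\zeta)$ is bounded below in modulus on the region $\im\zeta\ge k$ relevant here, so that $F_{\mu_j}=-1/m_{\mu_j}$ has well-controlled derivatives) rather than a generic Schwarz--Pick bound. Everything else is routine: once $F_{\mu_j}''$ is controlled, the linearization and inversion are mechanical.
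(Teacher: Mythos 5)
Your overall strategy---linearize, bound the nonlinear remainder, invert the Jacobian via the $S$-stability hypothesis, and absorb---is the right one, and it is also the strategy used in the proof in~\cite{BES15}. However, the explicit remainder bound you propose in Step~2 is not correct and would not close the argument under the stated hypothesis $k^2>\delta KS$, so there is a genuine gap.

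The issue is the exponent in your estimate $|F_{\mu_j}''|\lesssim K/k^3$. If you apply Cauchy's estimate for the second derivative on a disc of radius comparable to $k$ around $\omega_j(z_0)$, using $|F_{\mu_j}(w)|\lesssim |w|+C\lesssim K$ on that disc, you get $|F_{\mu_j}''|\lesssim K/k^2$, not $K/k^3$. This is not cosmetic: with $K/k^3$, the absorption in Step~3 would require $\delta K S\lesssim k^3$, which does not follow from $k^2>\delta KS$ when $k$ is small (the relevant regime). The extra factor of $k$ you have lost is precisely what the hypothesis is calibrated to. A sharper and cleaner way to obtain the needed bound, which avoids Cauchy discs and the $(k-\delta)$ degradation along the Taylor segment, is to use the Nevanlinna representation $F_\mu(\zeta)=a+\zeta+\int\frac{1+t\zeta}{t-\zeta}\,\dd\rho(t)$ directly. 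This gives the exact identity
\begin{align*}
F_\mu(\widetilde\omega)-F_\mu(\omega)=(\widetilde\omega-\omega)\Big[1+\int\frac{1+t^2}{(t-\widetilde\omega)(t-\omega)}\,\dd\rho(t)\Big]\,,
\end{align*}
so there is no Taylor remainder at all; the bracket differs from $F_\mu'(\omega)$ by $(\widetilde\omega-\omega)\int\frac{1+t^2}{(t-\widetilde\omega)(t-\omega)^2}\,\dd\rho(t)$, and one controls the latter using $\int\frac{1+t^2}{|t-\omega|^2}\,\dd\rho=\frac{\im F_\mu(\omega)-\im\omega}{\im\omega}$ together with $\im F_{\mu_1}(\omega_2(z_0))=\im\omega_1(z_0)+\im\omega_2(z_0)-\im z_0\le 2K$ and $\im\omega_j(z_0)\ge k$, $\im\widetilde\omega_j(z_0)\ge k-\delta>0$. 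This yields the $K/k^2$-type bound that makes the absorption consistent with $k^2>\delta KS$. You correctly flagged Step~2 as the delicate point, but the bound you wrote down is too weak by a factor of $1/k$, and the Nevanlinna computation sketched above is the missing ingredient.
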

In Section~\ref{s.7}, we will apply Lemma~\ref{le lemma perturbation of system} with the choices~$\mu_1=\mu_A$ and $\mu_2=\mu_B$. We thus next show that the system  $\Phi_{\mu_A,\mu_B}(\omega_A,\omega_B,z)=0$ is $S$-stable, for all $z\in \mathcal{S}_{\mathcal{I}}(0,1)$, and that~\eqref{le stability sum ims} holds uniformly on~$ \mathcal{S}_{\mathcal{I}}(0,1)$; see~\eqref{le domain S} for the definition.
\begin{lemma}[Lemma~5.1 and Corollary~5.2 of~\cite{BES15}] \label{cor.080601}
 Let $\mu_A$, $\mu_B$ be the probability measures from~\eqref{le empirical measures of A and B} satisfying the assumptions of Theorem~\ref{thm.091201}. Let $\omega_A,\omega_B$ denote the associated subordination functions of~\eqref{060101}. Let $\mathcal{I}$ be the interval in~Theorem~\ref{thm.091201}.   Then for  $N$ sufficiently large, the system $\PP_{\mu_A,\mu_B}(\omega_A,\omega_B,z)=0$ is $S$-stable with some positive constant~$S$, uniformly on $ \mathcal{S}_{\mathcal{I}}(0,1)$.  Moreover, there exist two strictly positive constants $K$ and $k$, such that for $N$ sufficiently large, we have
\begin{align}\label{le upper bound on omega AB}
 \max_{z\in \mathcal{S}_{\mathcal{I}}(0,1)}|\omega_A(z)|\le K\,,\qquad  \max_{z\in \mathcal{S}_{\mathcal{I}}(0,1)}|\omega_B(z)|\le K\,,
\end{align}
\begin{align}\label{le lower bound on omega AB}
\min_{z\in \mathcal{S}_{\mathcal{I}}(0,1)}\im\omega_A(z)\ge k\,,\qquad  \min_{z\in \mathcal{S}_{\mathcal{I}}(0,1)}\im\omega_B(z)\ge k\,.
\end{align}
\end{lemma}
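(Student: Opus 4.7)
The plan is to first prove the three claims for the limiting pair $(\mu_\alpha,\mu_\beta)$ on $\mathcal{S}_{\mathcal{I}}(0,1)$, and then transfer the bounds to the empirical pair $(\mu_A,\mu_B)$ for $N$ large by applying Lemma~\ref{le lemma perturbation of system} with $(\mu_1,\mu_2)=(\mu_A,\mu_B)$, using $(\omega_\alpha,\omega_\beta)$ as the a priori approximation and exploiting the weak-convergence assumption~\eqref{le assumptions convergence empirical measures}.

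For the limiting system $\PP_{\mu_\alpha,\mu_\beta}(\omega_\alpha,\omega_\beta,z)=0$: Proposition~\ref{prop extension} extends $\omega_\alpha,\omega_\beta$ continuously to $\R$, so by compactness of $\mathcal{I}$ one obtains a uniform upper bound $|\omega_\alpha(z)|,|\omega_\beta(z)|\le K_0$ on $\mathcal{S}_{\mathcal{I}}(0,1)$. Since $\mathcal{I}\subset\mathcal{B}_{\mu_\alpha\boxplus\mu_\beta}$, the density $f_{\mu_\alpha\boxplus\mu_\beta}$ is continuous and strictly positive on $\mathcal{I}$, hence bounded below by some $c_0>0$; via $F_{\mu_\alpha\boxplus\mu_\beta}=-1/m_{\mu_\alpha\boxplus\mu_\beta}$ this yields a uniform positive lower bound on $\im F_{\mu_\alpha\boxplus\mu_\beta}$ and a uniform upper bound on $|F_{\mu_\alpha\boxplus\mu_\beta}|$ across $\mathcal{S}_{\mathcal{I}}(0,1)$. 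Combined with the subordination identity $F_{\mu_\alpha\boxplus\mu_\beta}=F_{\mu_\alpha}(\omega_\beta)=F_{\mu_\beta}(\omega_\alpha)$ and the fact that $\im F_{\mu_\alpha}(w),\im F_{\mu_\beta}(w)\to 0$ as $\im w\searrow 0$ outside the respective supports, one deduces the boundary positivity $\im\omega_\alpha(z),\im\omega_\beta(z)\ge k_0>0$ uniformly on $\mathcal{S}_{\mathcal{I}}(0,1)$. For the $S$-stability, I would note that the Jacobian determinant
\[
\det\mathrm{D}\PP_{\mu_\alpha,\mu_\beta}(\omega_\alpha,\omega_\beta)=1-\bigl(F_{\mu_\alpha}'(\omega_\beta)-1\bigr)\bigl(F_{\mu_\beta}'(\omega_\alpha)-1\bigr)
\]
vanishes precisely at the spectral edges of $\mu_\alpha\boxplus\mu_\beta$ (bifurcation points of the subordination system) and at zeros of $f_{\mu_\alpha\boxplus\mu_\beta}$. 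Since $\mathcal{I}$ is compactly contained in the regular bulk and hence at positive distance from both of these, continuity and compactness give a uniform lower bound on $|\det\mathrm{D}\PP|$, which together with the already-established upper bound on $|F_{\mu_\alpha}'(\omega_\beta)|,|F_{\mu_\beta}'(\omega_\alpha)|$ yields $S_0$-stability on $\mathcal{S}_{\mathcal{I}}(0,1)$.

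For the transfer to $\mu_A,\mu_B$: the lower bound $\im\omega_\alpha,\im\omega_\beta\ge k_0$ confines $(\omega_\alpha(z),\omega_\beta(z))$ to the fixed compact set $\mathcal{K}\deq\{w\in\C:|w|\le K_0+1,\,\im w\ge k_0/2\}\subset\C^+$, on which $F_{\mu_A},F_{\mu_A}'$ converge uniformly to $F_{\mu_\alpha},F_{\mu_\alpha}'$ as $\dL(\mu_A,\mu_\alpha)\to 0$ (and similarly for $B$) by standard estimates on Stieltjes transforms away from the real line. Substituting $(\omega_\alpha(z),\omega_\beta(z))$ into $\PP_{\mu_A,\mu_B}$ thus produces a residual $\widetilde r(z)$ with $\|\widetilde r(z)\|=o(1)$ uniformly on $\mathcal{S}_{\mathcal{I}}(0,1)$. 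The a priori closeness~\eqref{le apriori closeness} required by Lemma~\ref{le lemma perturbation of system} can be obtained first at a single point with large $\im z$, where the Nevanlinna behaviour~\eqref{le limit of omega} and the uniform convergence of Stieltjes transforms give a crude $o(1)$ bound on $|\omega_A-\omega_\alpha|$ and $|\omega_B-\omega_\beta|$, and then propagated along vertical lines using the Lipschitz bound~\eqref{091460} together with a standard continuity/continuation argument. The conclusion~\eqref{le conclusion of lemma} then delivers $|\omega_A-\omega_\alpha|,|\omega_B-\omega_\beta|=o(1)$ uniformly, whence for $N$ large $\omega_A,\omega_B$ inherit~\eqref{le upper bound on omega AB}--\eqref{le lower bound on omega AB} with constants $K,k$ slightly worse than $K_0,k_0$, and the $S$-stability follows by continuity of $\det\mathrm{D}\PP_{\mu_A,\mu_B}$ in its arguments and in the measures.

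The main obstacle is the boundary analysis in the first step: continuous extension alone only yields $\im\omega_\alpha,\im\omega_\beta\ge 0$ on $\mathcal{I}$, and upgrading this to a strictly positive uniform lower bound $k_0>0$ — simultaneously with a uniform non-degeneracy of the Jacobian determinant — requires exploiting the deeper Belinschi-type structure of the regular bulk, namely that spectral edges are exactly the points where either a subordination function hits $\R$ or $\det\mathrm{D}\PP$ vanishes. Once this rigidity is in place, the rest of the argument is a routine perturbation based on Lemma~\ref{le lemma perturbation of system}.
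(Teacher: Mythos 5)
The paper never proves this lemma; it is imported verbatim from Lemma~5.1 and Corollary~5.2 of~\cite{BES15}, so there is no in-paper argument to compare against. Judged on its own merits, your high-level split — first establish boundedness, boundary positivity of $\im\omega$, and $S$-stability for the limiting pair $(\mu_\alpha,\mu_\beta)$ on the compact strip $\mathcal{S}_{\mathcal{I}}(0,1)$, then transfer to $(\mu_A,\mu_B)$ for $N$ large via Lemma~\ref{le lemma perturbation of system} and the L\'evy convergence~\eqref{le assumptions convergence empirical measures} — is the right organizing principle and matches the Lemma~5.1/Corollary~5.2 split in~\cite{BES15}.

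But the gap you acknowledge at the end is real, and your sketch does not close it. Continuous extension plus compactness give $|\omega_\alpha|,|\omega_\beta|\le K_0$ and $\im\omega_\alpha(x)+\im\omega_\beta(x)>0$ on $\mathcal{I}$, but only $\im\omega_\alpha,\im\omega_\beta\ge 0$ individually. Your mechanism ($\im F_{\mu_\alpha}(w)\to 0$ as $\im w\searrow 0$ outside $\supp\mu_\alpha$) rules out $\omega_\beta(x)\in\R$ lying outside $\supp\mu_\alpha$, but the hard case is when it would land inside $\supp\mu_\alpha$: there the first subordination equation gives $\im\omega_\alpha(x)>0$ and does nothing to refute $\im\omega_\beta(x)=0$. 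Likewise, your description of the zero set of $\det\mathrm{D}\PP_{\mu_\alpha,\mu_\beta}$ as precisely the spectral edges and density zeros is an unproved heuristic and not the mechanism actually used. The nondegeneracy comes from a quantitative Nevanlinna estimate: $F_\mu-\mathrm{id}$ is a Nevanlinna function with some finite representing measure, giving
\begin{align*}
|F_{\mu}'(w)-1|\;\le\;\frac{\im\bigl(F_{\mu}(w)-w\bigr)}{\im w}\,,
\end{align*}
which combined with the subordination relations yields
\begin{align*}
|F_{\mu_\alpha}'(\omega_\beta)-1|\cdot|F_{\mu_\beta}'(\omega_\alpha)-1|\;\le\;\frac{(\im\omega_\alpha-\im z)(\im\omega_\beta-\im z)}{\im\omega_\alpha\,\im\omega_\beta}\;\le\;1\,.
\end{align*}
Equality in the first display forces the representing measure of $F_\mu-\mathrm{id}$ to be a single point mass, which happens exactly when $\mu$ is supported at (at most) two points. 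The hypothesis of Theorem~\ref{thm.091201} that at least one of $\mu_\alpha,\mu_\beta$ is supported at more than two points is there precisely to forbid simultaneous saturation and thus keep the product strictly below $1$, hence $\det\mathrm{D}\PP$ bounded away from zero by compactness; your proposal never invokes this hypothesis, which is a concrete symptom of the missing structure. Once the limiting bounds are genuinely in hand, your transfer via a high-$\im z$ anchor and vertical continuity is sound, provided Lemma~\ref{le lemma perturbation of system} is re-applied at each step so the Lipschitz deviations do not accumulate.
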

\begin{remark}
 Under the assumptions of~Lemma~\ref{cor.080601}, the estimates in~\eqref{le lower bound on omega AB} can be extended as follows. There is $\tilde k>0$ such that
 \begin{align}\label{le former eta0 problem}
  \min_{z\in \mathcal{S}_{\mathcal{I}}(0,1)}(\im\omega_A(z)-\im z)\ge \tilde k\,,\qquad  \min_{z\in \mathcal{S}_{\mathcal{I}}(0,1)}(\im\omega_B(z)-\im z)\ge \tilde k\,.
 \end{align}
 This follows by combining~\eqref{le lower bound on omega AB} with the Nevanlinna representations in~\eqref{le neva for omega}.
 
\end{remark}

We conclude this section by mentioning that the general perturbation result in Lemma~\ref{le lemma perturbation of system} combined with Lemma~\ref{cor.080601}, can be used to prove~\eqref{le auxiliary statment}. We refer to~\cite{BES15} for details.

\section{Partial randomness decomposition}\label{s.5}
We use a decomposition of Haar measure on the unitary groups obtained in \cite{DS87} (see also~\cite{Mezzadri}): For a Haar distributed unitary matrix $ U\equiv U_N$, there exist a random vector $\mathbf{v}_1=(v_{11},\ldots, v_{1N})$, uniformly distributed on the complex unit $(N-1)$-sphere $\mathcal{S}_{\C}^{N-1}\deq \{\mathbf{x}\in\C^N\,:\, \mathbf{x}^*\mathbf{x}=1   \}$, and a Haar distributed unitary matrix $U^1\equiv U^1_{N-1}\in U(N-1) $, which is independent of $\mathbf{v}_1$, such that one has the decomposition
\begin{align*}
U=-\e{\mathrm{i}\theta_1}(I-\mathbf{r}_1\mathbf{r}_1^*)
\left(\begin{array}{ccc} 1 & ~\\ 
~ & U^{1}
\end{array}\right)=: -\e{\mathrm{i}\theta_1}R_1U^{\la 1\ra}\,,
\end{align*}
where 
\begin{align}
\mathbf{r}_1\deq\sqrt{2}\frac{\mathbf{e}_1+\e{-\mathrm{i}\theta_1}\mathbf{v}_1}{\|\mathbf{e}_1+\e{-\mathrm{i}\theta_1}\mathbf{v}_1\|_2}\,,     \qquad\qquad  R_1\deq I-\mathbf{r}_1\mathbf{r}_1^*\,,
 \end{align}
and where $	\theta_1$ is the argument of the first coordinate of the vector $\mathbf{v}_1$. 
More generally, for any $i\in \llbracket 1,N \rrbracket$, there exists an independent pair $(\mathbf{v}_i, U^{i})$, with $\mathbf{v}_i$ a uniformly distributed unit vector $\mathbf{v}_i$ and with  $U^{i}\in U(N-1)$ a Haar unitary, such that one has the decomposition
\begin{align}
 U=-\e{\mathrm{i}\theta_i}R_iU^{\langle i\rangle}\,,\qquad\mathbf{r}_i\deq\sqrt{2}\frac{\mathbf{e}_i+\e{-\mathrm{i}\theta_i}\mathbf{v}_i}{\|\mathbf{e}_i+\e{-\mathrm{i}\theta_i}\mathbf{v}_i\|_2}\,, \qquad R_i\deq I-\mathbf{r}_i\mathbf{r}_i^*\,,
\end{align}
where $U^{\la i\ra}$ is the unitary matrix with $\mathbf{e}_i$ as its $i$th column and $U^i$ as its $(i,i)$-matrix minor. 

With the above notation, we can write
\begin{align*}
H=A+R_i \widetilde{B}^{\la i\ra} R_i^*\,,
\end{align*}
for any $i\in \llbracket 1,N \rrbracket$, where we introduced the shorthand notation
\begin{align}
\widetilde{B}^{\la i\ra}\deq U^{\la i\ra}B \big(U^{\la i\ra}\big)^*\,. \label{0911401}
\end{align}
We further define
\begin{align}
H^{\la i\ra} \deq A+\widetilde{B}^{\la i \ra}\,,\quad\qquad G^{\la i \ra}(z)\deq (H^{\la i\ra}-z)^{-1}\,,\qquad\qquad z\in\C^+\,. \label{090820}
\end{align}
Note that $B^{\la i\ra}$, $H^{\la i\ra}$ and $G^{\la i\ra}$ are independent of $\mathbf{v}_i$. 

It is well known that for a uniformly distributed unit vector $\mathbf{v}_i\in\mathbb{C}^N$, there exists a Gaussian vector $\widetilde{\mathbf{g}}_i=(\widetilde{g}_{i1},\cdots, \widetilde{g}_{iN})\sim \mathcal{N}_{\mathbb{C}}(0,N^{-1}I)$  such that
\begin{align*}
\mathbf{v}_i=\frac{\widetilde{\mathbf{g}}_i}{\|\widetilde{\mathbf{g}}_i\|_2}\,.
\end{align*}
By definition, $\theta_i$ is also the argument of $\widetilde{g}_{ii}$. Set 
\begin{align}
g_{ik}\deq\e{-\mathrm{i}\theta_i}\widetilde{g}_{ik}\,, \qquad\qquad k\neq i\,, \label{def of g}
\end{align}
and introduce an $N_{\mathbb{C}}(0,N^{-1})$ variable $g_{ii}$ which is independent of the unitary matrix $U$ and of $\widetilde{\mathbf{g}}_{i}$. 
Then, we denote $\mathbf{g}_i\deq(g_{i1},\ldots,g_{iN})$ and note $\mathbf{g}_i\sim \mathcal{N}_{\mathbb{C}}(0,N^{-1}I)$. In addition, by definition, we have
\begin{align*}
\e{-\mathrm{i}\theta_i}\mathbf{v}_i-\mathbf{g}_i=\frac{|\widetilde{g}_{ii}|-g_{ii}}{\|\widetilde{\mathbf{g}}_i\|_2}\mathbf{e}_i+\Big(\frac{1}{\|\widetilde{\mathbf{g}}_i\|_2}-1\Big)\mathbf{g}_i\,.
\end{align*}
In subsequent estimates for $G_{ij}$, it is convenient to approximate $\mathbf{r}_i$ by 
\begin{align}
\mathbf{w}_i\deq\mathbf{e}_i+\mathbf{g}_i \label{091061}
\end{align} 
in the decomposition $U=-\e{\mathrm{i}\theta_i}R_iU^{\la i\ra}$, without changing the randomness of $U^{\la i\ra}$. To estimate the precision of this approximation, we require more notation: Let 
 \begin{align}
 W_i=W_i^*\deq I-\mathbf{w}_i\mathbf{w}_i^*\,, \qquad\qquad \widetilde{B}^{(i)}=W_i\widetilde{B}^{\la i\ra}W_i\,.  \label{091060}
 \end{align}
 Correspondingly, we  denote
 \begin{align}
 H^{(i)}\deq A+\widetilde{B}^{(i)}\,,\quad\qquad G^{(i)}(z)\deq(H^{(i)}-z)^{-1}\,,\qquad\qquad z\in\C^+\,. \label{091330}
 \end{align}

The following lemma shows that $\mathbf{r}_i$ can be replaced by $\mathbf{w}_i$ in Green function entries at the expense of an error that is below the precision we are interested in.
 \begin{lemma} \label{lem.091060} Fix $z=E+\ii\eta\in\C^+$ and choose indices $i,j,k\in\llbracket 1,N\rrbracket$. Suppose that
 \begin{align}
 &\max\Big\{|G_{kk}(z)|,  |G^{(i)}_{ij}(z)|\Big\}\prec 1\,, \nonumber\\
 & \max\Big\{|\mathbf{g}_i^*G^{(i)}{(z)}\mathbf{e}_j| ,  |\mathbf{g}_i^*\wt{B}^{\la i\ra}G^{(i)}(z)\mathbf{e}_j|\Big\}\prec 1\,, \label{0911111}
 \end{align}
hold. Then
 \begin{align}
\big|G_{kj}(z)-G^{(i)}_{kj}(z)\big|\prec\frac{1}{\sqrt{N\eta}} \label{091057}
 \end{align}
 holds, too.
 \end{lemma}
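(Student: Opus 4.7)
The plan is to apply the one-step resolvent identity $G - G^{(i)} = -G M G^{(i)}$, with $M := H - H^{(i)}$, and to bound the result using a rank-six decomposition of $M$ together with the lemma's hypotheses.

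Set $\mathbf{d} := \mathbf{r}_i - \mathbf{w}_i$. From $\mathbf{r}_i = \sqrt{2}(\mathbf{e}_i + \e{-\mathrm{i}\theta_i}\mathbf{v}_i)/\|\mathbf{e}_i + \e{-\mathrm{i}\theta_i}\mathbf{v}_i\|_2$, the identity for $\e{-\mathrm{i}\theta_i}\mathbf{v}_i - \mathbf{g}_i$ displayed in the text, and Lemma~\ref{lem.091720} providing $\|\widetilde{\mathbf{g}}_i\|_2 = 1 + \OSD(N^{-1/2})$ and $|\widetilde g_{ii}| \prec N^{-1/2}$, one reads off $\mathbf{d} = c_1 \mathbf{e}_i + c_2 \mathbf{g}_i$ with scalar coefficients $|c_1|, |c_2| \prec N^{-1/2}$. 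Using $R_i - W_i = -(\mathbf{w}_i\mathbf{d}^* + \mathbf{d}\mathbf{w}_i^* + \mathbf{d}\mathbf{d}^*)$, the identity $M = (R_i - W_i)\widetilde{B}^{\langle i\rangle} R_i + W_i \widetilde{B}^{\langle i\rangle}(R_i - W_i)$ writes $M = \sum_{\ell=1}^6 \mathbf{p}_\ell \mathbf{q}_\ell^*$, where in each pair at least one of $\mathbf{p}_\ell, \mathbf{q}_\ell$ is $\mathbf{d}$ or $\widetilde{B}^{\langle i\rangle}\mathbf{d}$ (Euclidean norm $\prec N^{-1/2}$), and the other is an $O(1)$-norm linear combination of $\mathbf{w}_i, \mathbf{r}_i, \widetilde{B}^{\langle i\rangle}\mathbf{w}_i, \widetilde{B}^{\langle i\rangle}\mathbf{r}_i$.

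Substituting into the resolvent identity yields
\[
G_{kj}(z) - G^{(i)}_{kj}(z) = -\sum_{\ell=1}^6 (\mathbf{e}_k^* G(z) \mathbf{p}_\ell)(\mathbf{q}_\ell^* G^{(i)}(z) \mathbf{e}_j).
\]
For the left factor, Cauchy-Schwarz together with the Ward identity $\|G(z)^*\mathbf{e}_k\|_2^2 = \im G_{kk}(z)/\eta$ and the hypothesis $|G_{kk}|\prec 1$ give $|\mathbf{e}_k^* G \mathbf{p}_\ell| \prec \eta^{-1/2}\|\mathbf{p}_\ell\|_2$. For the right factor, I would expand $\mathbf{q}_\ell$ via $\mathbf{w}_i = \mathbf{e}_i + \mathbf{g}_i$, $\mathbf{r}_i = \mathbf{w}_i + \mathbf{d}$, $\mathbf{d} = c_1\mathbf{e}_i + c_2\mathbf{g}_i$, and reduce $\mathbf{q}_\ell^* G^{(i)}\mathbf{e}_j$ to linear combinations of $G^{(i)}_{ij}$, $\mathbf{g}_i^* G^{(i)}\mathbf{e}_j$, and $\mathbf{g}_i^*\widetilde{B}^{\langle i\rangle} G^{(i)}\mathbf{e}_j$, each $\prec 1$ by hypothesis. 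The key structural input is $\widetilde{B}^{\langle i\rangle}\mathbf{e}_i = b_i\mathbf{e}_i$, which follows from $U^{\langle i\rangle}\mathbf{e}_i = \mathbf{e}_i$ and $B$ diagonal, so that every $\mathbf{e}_i$-component becomes a bounded multiple of $G^{(i)}_{ij}$. Residual scalars of the form $|\mathbf{g}_i^*\widetilde{B}^{\langle i\rangle}\mathbf{g}_i|$ are controlled by Lemma~\ref{lem.091720} together with the normalization $\ntr\widetilde{B}^{\langle i\rangle} = 0$.

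Each term in the sum thus carries one factor of order $N^{-1/2}$ (from the $\mathbf{d}$-side) times a factor of order $\eta^{-1/2}$ or $1$, giving $\prec 1/\sqrt{N\eta}$; summing the six terms proves the lemma. The main obstacle will be verifying that the right-factor expansion closes cleanly onto the three hypothesized inner products: any residual piece that cannot be rewritten in terms of $G^{(i)}_{ij}$, $\mathbf{g}_i^* G^{(i)}\mathbf{e}_j$, or $\mathbf{g}_i^*\widetilde{B}^{\langle i\rangle} G^{(i)}\mathbf{e}_j$ would have to be bounded via an operator-norm argument, which would cost an extra $\eta^{-1/2}$ and break the target rate.
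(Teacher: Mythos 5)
Your proof is correct and follows essentially the same strategy as the paper's: both decompose $\mathbf{r}_i-\mathbf{w}_i$ into $\mathbf{e}_i$- and $\mathbf{g}_i$-components with $\prec N^{-1/2}$ scalar coefficients, expand $G-G^{(i)}$ via the exact one-step resolvent identity, and bound the resulting finite-rank contributions by pairing a Ward-identity estimate $\|G\mathbf{e}_k\|_2\prec\eta^{-1/2}$ on the $G$-side against the hypothesized $\prec 1$ bounds for $G^{(i)}_{ij}$, $\mathbf{g}_i^*G^{(i)}\mathbf{e}_j$, $\mathbf{g}_i^*\widetilde B^{\langle i\rangle}G^{(i)}\mathbf{e}_j$ on the $G^{(i)}$-side. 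Your closing concern does not materialize: the right-factor expansion closes onto exactly these inner products plus bounded scalars (using $\widetilde B^{\langle i\rangle}\mathbf{e}_i=b_i\mathbf{e}_i$ and Gaussian large deviations), which is the content of the paper's list \eqref{0911100}.
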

 
 \begin{proof}[Proof of Lemma \ref{lem.091060}] Fix $i,j,k\in\llbracket 1,N\rrbracket$. We first note that
 \begin{align*}
 \mathbf{r}_i=\mathbf{w}_i+\delta_{1i}\mathbf{e}_i+\delta_{2i}\mathbf{g}_i\,,
 \end{align*}
where
 \begin{align}\label{le delta with the norms}
 &\delta_{1i}\deq \bigg(\frac{\sqrt{2}}{\| \mathbf{e}_i+\e{-\mathrm{i}\theta_i}\mathbf{v}_i\|_2}-1\bigg)+\frac{\sqrt{2}}{\| \mathbf{e}_i+\e{-\mathrm{i}\theta_i}\mathbf{v}_i\|_2}\frac{|\widetilde{g}_{ii}|-g_{ii}}{\|\widetilde{\mathbf{g}}_i\|_2}\,,\nonumber\\
 &\delta_{2i}\deq \frac{\sqrt{2}}{\| \mathbf{e}_i+\e{-\mathrm{i}\theta_i}\mathbf{v}_i\|_2}\frac{1}{\|\widetilde{\mathbf{g}}_i\|_2}-1\,.
 \end{align}
 By the strong concentration of the norms in~\eqref{le delta with the norms} and $g_{ii},\widetilde{g}_{ii}\sim N_{\mathbb{C}}(0,N^{-1})$, we have
 \begin{align}
 |\delta_{1i}|\prec \frac{1}{\sqrt{N}}\,,\qquad\quad |\delta_{2i}|\prec \frac{1}{\sqrt{N}}\,. \label{091055}
 \end{align}
 Denote
 \begin{align*}
 \Delta_i\deq\mathbf{w}_i\mathbf{w}_i^*-\mathbf{r}_i\mathbf{r}_i^*\,.
 \end{align*}
 Fix now $z\in\C^+$. Dropping $z$ from the notation, a first order Neumann expansion of the resolvent yields
 \begin{align}
 G_{kj}=G^{(i)}_{kj}-\big(G(\Delta_i \wt{B}^{\la i\ra}W_i+W_i\wt{B}^{\la i\ra}\Delta_i+\Delta_i\wt{B}^{\la i\ra}\Delta_i)G^{(i)}\big)_{kj}\,. \label{091050}
 \end{align}
 Observe that the second term on the right side of~\eqref{091050} is a polynomial in the terms
 \begin{align}\def\arraystretch{1.5}\begin{array}{lllll}
 G^{(i)}_{ij}\,,  & \mathbf{g}_i^*G^{(i)}\mathbf{e}_j\, & \mathbf{g}_i^*\wt{B}^{\la i\ra}G^{(i)}\mathbf{e}_j\,,  &\mathbf{e}_i^*\wt{B}^{\la i\ra}G^{(i)}\mathbf{e}_j\,,\\
 G_{ki}\,, &\mathbf{e}_k^*G\mathbf{g}_i \, , &\mathbf{e}_k^*G\wt{B}^{\la i\ra} \mathbf{g}_i\, , &\mathbf{e}_k^*G\wt{B}^{\la i\ra} \mathbf{e}_i\, ,\\
 \mathbf{g}_i^*\wt{B}^{\la i\ra} \mathbf{e}_i\, , &\mathbf{e}_i^*\wt{B}^{\la i\ra}\mathbf{g}_i\, , & \mathbf{g}_i^*\wt{B}^{\la i\ra}\mathbf{g}_i, &\mathbf{e}_i^*\wt{B}^{\la i\ra}\mathbf{e}_i\,,
   \end{array}\label{0911100}
 \end{align}
 with coefficients of the form $\delta_{1i}^{k_1}\delta_{2i}^{k_2}$, for some nonnegative integers $k_1,k_2$ such that $k_1+k_2\geq 1$.
By assumption~\eqref{0911111} and the fact $\wt{B}^{\la i\ra}\mathbf{e}_i=b_i\mathbf{e}_i$, we further observe that the first four terms in~\eqref{0911100} are stochastically dominated by one. The last four terms are also stochastically dominated by one as follows from the trivial fact $\mathbf{e}_i^*\wt{B}^{\la i\ra}\mathbf{e}_i=b_i$ and Lemma~\ref{lem.091720}. The terms in the second line of~\eqref{0911100} are stochastically dominated by 
 \begin{align}
 |\mathbf{e}_k^*GQ^{\la i\ra}{\mathbf{x}_i}|\prec \|Q^{\la i\ra}\| \|G \mathbf{e}_k\|_2\lesssim \sqrt{(G^*G)_{kk}}=\sqrt{\frac{\Im G_{kk}}{\eta}}\prec {\frac{1}{\sqrt\eta}}\,,\label{091301}
 \end{align}
 with $Q^{\la i\ra}=I$ or $\wt{B}^{\la i\ra}$, and with ${\mathbf{x}_i}=\mathbf{e}_i$ or $\mathbf{g}_i$, where the last step follows from~\eqref{0911111}. Note that the terms in the second line of ~\eqref{0911100} appear only linearly in~\eqref{091050}. 
 Hence,~\eqref{091055},~\eqref{091301} and the order one bound for the first and last four terms in~\eqref{0911100} lead to ~\eqref{091057}.\qedhere
 \end{proof}

\section{Concentration with respect to the vector $\mathbf{g}_i$}\label{s.6}
In this section, we show that $G_{ii}^{(i)}$ concentrates around the partial expectation $\mathbb{E}_{\mathbf{g}_i}[G_{ii}^{(i)}]$, where $\mathbb{E}_{\mathbf{g}_i}[\,\cdot\,]$ is the expectation with respect to the collection $(\Re g_{ij}, \Im g_{ij})_{j=1}^N$. Besides the diagonal Green function entries $G^{(i)}_{ii}=\mathbf{e}_i^* G^{(i)}\mathbf{e}_i$ the following combinations are of importance
\begin{align}\label{le T and S}
 T_i(z)\deq \bs{g}_i^*G^{(i)}(z) \bs{e}_i\,,\qquad S_i(z)\deq   \bs{g}_i^* \widetilde B^{\la i\ra}G^{(i)}\bs{e}_i\,,\qquad\quad z\in\C^+\,.
\end{align}
The estimation of $\E_{\bs{g}_i}[G_{ii}^{(i)}]$, carried out in the Sections~\ref{section partial concentration} and~\ref{s.7}, involves the quantities~$T_i$ and~$S_i$. From a technical point of view, it is convenient to be able to go back and forth between~$T_i$, $S_i$ and their expectations~$\mathbb{E}_{\mathbf{g}_i} [T_i]$, $\mathbb{E}_{\mathbf{g}_i} [S_i]$. Thus after establishing concentration estimates for $G_{ii}^{(i)}$ in Lemma~\ref{lem090501} below,  we establish in Corollary~\ref{cor090501} concentration estimates for~$T_i$ and ~$S_i$ where we also give a rough bounds on $T_i$, $S_i$ and related quantities. We need some more notation: for a general random variable $X$ we define
\begin{align}\label{the former Q}
 \IE_{\bs{g}_i} X\deq X-\E_{\bs{g}_i} X\,.
\end{align}
The main task in this section is to prove the following lemma.

\begin{lemma}  \label{lem090501} Suppose that the assumptions of Theorem~\ref{thm.091201} are satisfied and let $\gamma>0$. Fix $z=E+\ii\eta\in \mathcal{S}_{\mathcal{I}}(\eta_{\mathrm{m}},1)$ and assume that 
\begin{align}
\big| G_{ii}(z)-(a_i-\omega_B(z))^{-1}\big| \prec N^{-\frac{\gamma}{4}}\,,\qquad \big| G_{ii}^{(i)}(z)-(a_i-\omega_B(z))^{-1}\big| \prec N^{-\frac{\gamma}{4}}\,, \label{090830}
\end{align}
uniformly in $i\in\llbracket 1,N\rrbracket$. Then
\begin{align}
&\max_{i\in\llbracket 1,N\rrbracket}\big|\IE_{\mathbf{g}_i}[G_{ii}^{(i)}(z)]\big|\prec \frac{1}{\sqrt{N\eta}}\,.\label{0907100}
\end{align} 
\end{lemma}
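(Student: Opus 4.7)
The plan is to exploit that $H^{(i)}=A+\wt{B}^{(i)}$ differs from the $\mathbf{g}_i$-independent matrix $H^{\la i\ra}=A+\wt{B}^{\la i\ra}$ only by a Hermitian rank-two perturbation, and thereby to express $G^{(i)}_{ii}$ as an explicit rational function of a handful of Gaussian quadratic and linear forms in $\mathbf{g}_i$. With $\mathbf{p}_i\deq\wt{B}^{\la i\ra}\mathbf{w}_i$ and $\alpha_i\deq\mathbf{w}_i^*\wt{B}^{\la i\ra}\mathbf{w}_i$, a direct computation gives
\begin{align*}
\wt{B}^{(i)}-\wt{B}^{\la i\ra}=\alpha_i\mathbf{w}_i\mathbf{w}_i^*-\mathbf{w}_i\mathbf{p}_i^*-\mathbf{p}_i\mathbf{w}_i^*\,,
\end{align*}
which is Hermitian of rank at most two. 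Diagonalizing it as $\lambda_+\mathbf{u}_+\mathbf{u}_+^*-\lambda_-\mathbf{u}_-\mathbf{u}_-^*$, or equivalently applying the Sherman--Morrison formula twice, yields a closed-form expression for $G^{(i)}_{ii}$ as a rational function of the $G^{\la i\ra}$-quantities $\mathbf{u}_\pm^*G^{\la i\ra}\mathbf{u}_\pm$, $\mathbf{u}_\pm^*G^{\la i\ra}\mathbf{e}_i$ and $\mathbf{e}_i^*G^{\la i\ra}\mathbf{e}_i$, with coefficients built from $\alpha_i$, $\lambda_\pm$ and the scalar products of $\mathbf{u}_\pm$ with $\mathbf{e}_i$.

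With this representation in hand, I separate the Gaussian randomness. Because $\mathbf{w}_i=\mathbf{e}_i+\mathbf{g}_i$ and $\mathbf{p}_i,\alpha_i,\lambda_\pm,\mathbf{u}_\pm$ are simple polynomial expressions in $\mathbf{g}_i$ against vectors that are deterministic conditionally on $U^{\la i\ra}$, every ingredient of the representation reduces, up to a deterministic prefactor, to a Gaussian quadratic form $\mathbf{g}_i^*M\mathbf{g}_i$ or linear form $\mathbf{g}_i^*M\mathbf{e}_i$, where $M$ is a bounded-length word in $G^{\la i\ra}$ and $\wt{B}^{\la i\ra}$. Lemma~\ref{lem.091720} with $\sigma^2=1/N$ then gives
\begin{align*}
\bigl|\mathbf{g}_i^*M\mathbf{g}_i-\ntr M\bigr|\prec\frac{\|M\|_2}{N}\,,\qquad |\mathbf{g}_i^*M\mathbf{e}_i|\prec\frac{\|M\mathbf{e}_i\|_2}{\sqrt{N}}\,.
\end{align*}
The Ward identity $\|G^{\la i\ra}\|_2^2=N\eta^{-1}\Im\ntr G^{\la i\ra}$, combined with the a priori bound~\eqref{090830} and the rank-two closeness of $G^{\la i\ra}$ and $G^{(i)}$ guaranteed by~\eqref{091002}, yields $\|G^{\la i\ra}\|_2\prec\sqrt{N/\eta}$. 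Since $\|\wt{B}^{\la i\ra}\|=\|B\|$ is uniformly bounded, this gives $\|M\|_2\prec\sqrt{N/\eta}$ for every relevant word $M$, so that each centered building block fluctuates on scale $(N\eta)^{-1/2}$.

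Finally, $\E_{\mathbf{g}_i}$ replaces $\mathbf{g}_i^*M\mathbf{g}_i$ by $\ntr M$ and $\mathbf{g}_i^*M\mathbf{e}_i$ by $0$ inside the rational representation for $G^{(i)}_{ii}$; by Taylor-expanding that representation around these conditional means, $\IE_{\mathbf{g}_i}[G^{(i)}_{ii}]=G^{(i)}_{ii}-\E_{\mathbf{g}_i}[G^{(i)}_{ii}]$ becomes a finite sum of such centered building blocks, each of which is $\prec(N\eta)^{-1/2}$, which gives~\eqref{0907100}. The main technical obstacle is the stability of the Woodbury denominators: the rational representation contains denominators of the form $1+\mathbf{u}_\pm^*G^{\la i\ra}\mathbf{u}_\pm$, which must remain macroscopically separated from zero with overwhelming probability. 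This is ensured by the a priori bound~\eqref{090830} together with the lower bound $\im\omega_B(z)\ge\widetilde k>0$ from~\eqref{le former eta0 problem}, which force the deterministic leading terms $(a_j-\omega_B(z))^{-1}$ to have macroscopically positive imaginary parts and thus keep the Woodbury denominators safely away from zero.
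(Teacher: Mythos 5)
The proposal takes a genuinely different and, as written, incorrect route. You expand $G^{(i)}$ directly around the $\mathbf{g}_i$-independent resolvent $G^{\la i\ra}$ via a rank-two Woodbury formula. The paper explicitly rules this out at the start of its proof: since $G^{\la i\ra}_{ii}(z)=1/(a_i+b_i-z)$, this deterministic quantity can be as large as $1/\eta$ whenever $a_i+b_i$ is close to $E$, and therefore the building blocks $\mathbf{u}_\pm^*G^{\la i\ra}\mathbf{u}_\pm$ appearing in your Woodbury denominators (the vectors $\mathbf{u}_\pm$ have $O(1)$ overlap with $\mathbf{e}_i$, since $\mathbf{w}_i=\mathbf{e}_i+\mathbf{g}_i$) are not $O(1)$. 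The paper's actual proof avoids this by first passing to the $z$-dependent regularized matrices $H^{\{i\}}(z)$ and $H^{[i]}(z)$ obtained by subtracting $(b_i+\omega_B(z)-z)\mathbf{e}_i\mathbf{e}_i^*$; the crucial point is that $G^{[i]}_{ii}(z)=1/(a_i-\omega_B(z))$ is uniformly bounded thanks to the lower bound on $\im\omega_B$, whereas $G^{\la i\ra}_{ii}$ is not.

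Your proposed fix for "the main technical obstacle" does not address the real issue. You invoke the a priori bound~\eqref{090830} and $\im\omega_B(z)\ge\widetilde k$ to claim the Woodbury denominators stay away from zero because "the deterministic leading terms $(a_j-\omega_B(z))^{-1}$ have macroscopically positive imaginary parts." But $(a_j-\omega_B(z))^{-1}$ is the leading behaviour of $G_{jj}$ and $G^{(i)}_{jj}$, not of $G^{\la i\ra}_{jj}$; the a priori bound~\eqref{090830} says nothing about $G^{\la i\ra}_{ii}$, which equals the explicit $\omega_B$-independent number $1/(a_i+b_i-z)$. Moreover, even if you could prevent the denominators from vanishing, the Taylor expansion of your rational representation around the conditional means would carry partial-derivative coefficients of order $G^{\la i\ra}_{ii}\sim 1/\eta$, which multiplied by the $(N\eta)^{-1/2}$ fluctuations of the centered building blocks would give errors far larger than the claimed $(N\eta)^{-1/2}$. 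To repair the argument you would essentially be forced to reproduce the paper's regularization step: replace $G^{\la i\ra}$ by $G^{[i]}$, go from $G^{(i)}$ to $G^{\{i\}}$ by a rank-one identity controlled via~\eqref{100505}, and only then do the rank-two expansion, exactly as in~\eqref{090755}--\eqref{092010}.
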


\begin{proof}[Proof of Lemma \ref{lem090501}]
In this proof we fix $z\in \mathcal{S}_{\mathcal{I}}(\eta_{\mathrm{m}},1)$.  Recall the definition of $G^{\la i\ra}(z)$ in~\eqref{090820} and note that $G^{\la i\ra}(z)$ is independent of $\mathbf{v}_i$ (or $\mathbf{g}_i$). It is therefore natural to expand $G^{(i)}(z)$ around $G^{\la i\ra}(z)$ and to use the independence between $G^{\la i\ra}(z)$ and $\mathbf{g}_i$ in order to verify the concentration estimates. However, by construction, we have
\begin{align}
G^{\la i\ra}_{ii}(z)=\frac{1}{a_i+b_i-z}\,,  \label{091405}
\end{align}
which may be as large as $1/\eta$, depending on $a_i$, $b_i$ and $z$. To circumvent problems coming from instabilities in $G^{\la i\ra}_{ii}(z)$, we may use a  ``regularization" trick to enhance stability in the $\mathbf{e}_i$-direction: instead of considering the Green function of $H^{(i)}=A+\widetilde{B}^{(i)}$ directly, we first consider the ($z$-dependent) matrix 
\begin{align}
H^{\{i\}}(z)\deq A+\widetilde{B}^{(i)}-(b_i+\omega_B(z)-z)\mathbf{e}_i\mathbf{e}_i^*\,, \label{092002}
\end{align}
and define $G^{\{i\}}(z)\deq(H^{\{i\}}(z)-z)^{-1}$. Note that $H^{\{ i\}}(z)$ is not symmetric, yet since $\im \omega_B(z)\ge \im z$ by Proposition~\ref{le prop 1}, $ G^{\{i\}}(z)\ $ is in fact well-defined on the whole upper-half plane.
  Fix any $j\in \llbracket 1,N\rrbracket$. Using the rank-one perturbation formula~\eqref{091002Kevin}, we get
\begin{align}
G_{ij}^{(i)}(z)=G^{\{i\}}_{ij}(z)-\frac{(b_i+\omega_B(z)-z)G^{\{i\}}_{ii}(z)G^{\{i\}}_{ij}(z)}{1+(b_i+\omega_B(z)-z)G^{\{i\}}_{ii}(z)}=\frac{G^{\{i\}}_{ij}(z)}{1+(b_i+\omega_B(z)-z)G^{\{i\}}_{ii}(z)}\,.\label{091001}
\end{align}
Some algebra then reveals that
\begin{align}
G_{ij}^{(i)}(z)=\frac{G_{ij}^{\{i\}}(z)-(b_i+\omega_B(z)-z)\IE_{\mathbf{g}_i}[G^{\{i\}}_{ii}(z)]\, G_{ij}^{(i)}(z)}{1+(b_i+\omega_B(z)-z)\mathbb{E}_{\mathbf{g}_i}[G^{\{i\}}_{ii}(z)]}\,.\label{091004}
\end{align}
By assumption~\eqref{090830} and identity~\eqref{091001}, we have
\begin{align}
\Big|G^{(i)}_{ii}(z)-\frac{1}{a_i-\omega_B(z)}\Big|\prec N^{-\frac{\gamma}{4}}\, ,\qquad \Big|G^{\{i\}}_{ii}(z)-\frac{1}{a_i-b_i-2\omega_B(z)+z}\Big|\prec N^{-\frac{\gamma}{4}}\,.\label{091003}
\end{align}
Note that $|\im (a_i-b_i-2\omega_B(z)+z)|\ge \im \omega_B(z)>0$,
 thus both denominators are  well separated away from 0 by Lemma~\ref{cor.080601},  in particular
$G^{(i)}_{ii}(z)$ and $G^{\{i\}}_{ii}(z)$ are uniformly bounded.
We will prove below that
\begin{align}
\big|\IE_{\mathbf{g}_i}[G^{\{i\}}_{ii}(z)]\big|\prec \frac{1}{\sqrt{N\eta}}\,. \label{091012}
\end{align}
Setting $j=i$ in~\eqref{091004} and expressing the denominator on the right side 
 by using \eqref{091003}-\eqref{091012},
we get
\begin{align}
\Big|1+(b_i+\omega_B(z)-z)\mathbb{E}_{\mathbf{g}_i}[G^{\{i\}}_{ii}(z)]
-\frac{a_i-\omega_B(z)}{a_i-b_i-2\omega_B(z)+z}\Big|\prec N^{-\frac{\gamma}{4}}\,. \label{100505}
\end{align}
In particular, together with Lemma~\ref{cor.080601} and $\Im \omega_B(z)\geq \Im z$, this implies that the absolute value of the denominator on the right side of~\eqref{091004} is bounded from below by some strictly positive constant.
Thus, applying $\IE_{\mathbf{g}_i}$ on both sides of \eqref{091012}, we obtain the concentration estimate in~\eqref{0907100}. 

In the rest of the proof, we verify~\eqref{091012}. Consider next the matrix
\begin{align}
H^{[i]}(z)\deq A+\widetilde{B}^{\la i\ra}-(b_i+\omega_B(z)-z)\mathbf{e}_i\mathbf{e}_i^*\,, \label{091401}
\end{align}
and let $G^{[i]}(z)\deq (H^{[i]}(z)-z)^{-1}$. Note that $H^{[i]}(z)$ depends on $z$ and $\omega_B(z)$ and is thus not symmetric, yet since $\im \omega_B(z)\ge \im z$, $G^{[i]}(z)$ is, similar to $G^{\{i\}}$, well-defined on the whole upper-half plane. Note that 
\begin{align}
|G^{[i]}_{ii}(z)|=\left|\frac{1}{a_i-\omega_B(z)}\right| \lesssim 1  \label{091031}
\end{align} 
 since $\im \omega_B$ is uniformly bounded from below on $ \mathcal{S}_{\mathcal{I}}(\eta_{\mathrm{m}},1)$ by Lemma \ref{cor.080601}.

  We now expand $G^{\{i\}}(z)$ around $G^{[i]}(z)$ and use the independence among~$G^{[i]}(z)$ and $\mathbf{g}_i$.  For simplicity, we hereafter drop the $z$-dependence from the notation. We start with noticing that
\begin{align}
H^{\{i\}}-H^{[i]}&=-\mathbf{w}_i\mathbf{w}_i^* \widetilde{B}^{\la i\ra}- \widetilde{B}^{\la i\ra}\mathbf{w}_i\mathbf{w}_i^*+\mathbf{w}_i\mathbf{w}_i^* \widetilde{B}^{\la i\ra}\mathbf{w}_i\mathbf{w}_i^*\nonumber\\
&=-\mathbf{w}_i\mathbf{w}_i^* \widetilde{B}^{\la i\ra}- \big(\widetilde{B}^{\la i\ra}-\mathbf{w}_i^* \widetilde{B}^{\la i\ra}\mathbf{w}_i\; I\big)\mathbf{w}_i\mathbf{w}_i^*\nonumber\\
&=\mathbf{w}_i\mathbf{s}_i^*+\mathbf{t}_i\mathbf{w}_i^*\,,\label{092001}
\end{align}
where we introduced
\begin{align}
\bs{s}_i\deq-\widetilde{B}^{\la i\ra}\mathbf{w}_i\,,  \qquad \bs{t}_i\deq-\big(\widetilde{B}^{\la i\ra}-\mathbf{w}_i^* \widetilde{B}^{\la i\ra}\mathbf{w}_i\; I\big)\mathbf{w}_i\,. \label{090701}
\end{align}
Iterating the rank-one perturbation formula~\eqref{091002Kevin} once, we obtain
\begin{align}
G^{\{i\}}=X^{[i]}-\frac{X^{[i]}\bs{t}_i\mathbf{w}_i^*X^{[i]}}{1+\mathbf{w}_i^*X^{[i]}\bs{t}_i}\,,\qquad\quad X^{[i]}\deq G^{ [i]}-\frac{G^{[ i]}\mathbf{w}_i\bs{s}_i^* G^{[i]}}{1+\bs{s}_i^*G^{[i]}\mathbf{w}_i}\,.\label	{090755}
\end{align}
Taking the $(i,j)$th matrix entry in~\eqref{090755}, we have
\begin{align}
G_{ij}^{\{i\}}=G_{ij}^{[i]}+\frac{\Psi_{i,j}}{1+\Xi_i}\,, \label{090610}
\end{align}  
where we introduced
\begin{multline}
\Xi_i\deq (\bs{s}_i^*G^{[i]} \mathbf{w}_i)+(\mathbf{w}_i^*G^{[i]}\bs{t}_i)+(\bs{s}_i^*G^{[i]} \mathbf{w}_i)(\mathbf{w}_i^*G^{[i]}\bs{t}_i)-(\mathbf{w}_i^*G^{[i]}\mathbf{w}_i)( \bs{s}_i^*G^{[i]}\bs{t}_i)\, , \label{090837}
\end{multline}
and
\begin{align}
\Psi_{i,j}&\deq-(\mathbf{e}_i^*G^{[i]}\bs{t}_i)\big((\mathbf{w}_i^* G^{[i]}\mathbf{e}_j)+(\bs{s}_i^*G^{[i]} \mathbf{w}_i)( \mathbf{w}_i^* G^{[i]}\mathbf{e}_j)-( \mathbf{w}_i^*G^{[i]}\mathbf{w}_i)( \bs{s}_i^*G^{[i]}\mathbf{e}_j)\big)\nonumber\\
&-(\mathbf{e}_i^*G^{[i]}\mathbf{w}_i)\big( (\bs{s}_i^* G^{[i]}\mathbf{e}_j)+(\mathbf{w}_i^*G^{[i]}\bs{t}_i)( \bs{s}_i^* G^{[i]}\mathbf{e}_j)-(\bs{s}_i^* G^{[i]}\bs{t}_i)( \mathbf{w}_i^*G^{[i]}\mathbf{e}_j)\big)\,. \label{090811}
\end{align}
We now rewrite~\eqref{090610} as
\begin{align}
 G_{ij}^{\{i\}}=G_{ij}^{[i]}+\frac{\Psi_{i,j}-\IE_{\mathbf{g}_i}[\Xi_i]\,(G_{ii}^{\{i\}}-G_{ij}^{[i]})}{1+\mathbb{E}_{\mathbf{g}_i}[\Xi_i]}\,. \label{0907110}
\end{align}
Since $|G_{ii}^{\{i\}}|\prec 1$ (\cf~\eqref{091003}) and $|G_{ii}^{[i]}|\prec 1$ (\cf~\eqref{091031}), it suffices to verify the following statements to show~\eqref{091012}:
\begin{align}
\text{(i):}\quad \big|\IE_{\mathbf{g}_i}[\Xi_i]\big|\prec\frac{1}{\sqrt{N\eta}}\,,\quad\; \text{(ii):}  \quad \frac{1}{1+\mathbb{E}_{\mathbf{g}_i}[\Xi_i]}\prec 1\,,\quad\; \text{(iii):} \quad \big|\IE_{\mathbf{g}_i}[\Psi_{i,j}]\big|\prec \frac{1}{\sqrt{N\eta}}\,. \label{092010}
\end{align}

We first show claim $(i)$. Substituting the definitions in~\eqref{090701} into~\eqref{090837}, we have
\begin{align}
\Xi_i = &-\mathbf{w}_i^*\widetilde{B}^{\la i\ra} G^{ [i]} \mathbf{w}_i -\mathbf{w}_i^*G^{[ i]} \widetilde{B}^{\la i\ra}\mathbf{w}_i+\mathbf{w}_i^*\widetilde{B}^{\la i\ra}G^{[i]}\mathbf{w}_i\,\mathbf{w}_i^*G^{[ i]} \widetilde{B}^{\la i\ra}\mathbf{w}_i\nonumber\\
&\qquad-\mathbf{w}_i^* G^{[i]}\mathbf{w}_i \,\mathbf{w}_i^*\wt{B}^{\la i\ra} G^{[ i]} \wt{B}^{\la i\ra}\mathbf{w}_i+\mathbf{w}_i^*\wt{B}^{\la i\ra}\mathbf{w}_i\,\mathbf{w}_i^*G^{[ i]}\mathbf{w}_i\nonumber\\
&\qquad-\mathbf{w}_i^*\wt{B}^{\la i\ra}\mathbf{w}_i\,\mathbf{w}_i^*\wt{B}^{\la i\ra} G^{[i]}\mathbf{w}_i-\mathbf{w}_i^*\wt{B}^{\la i\ra}\mathbf{w}_i\,\mathbf{w}_i^*G^{[ i]} \mathbf{w}_i\,\mathbf{w}_i^*\widetilde{B}^{\la i\ra}G^{[i]}\mathbf{w}_i\,. \label{091040}
\end{align}
Let $Q_1^{\la i\ra}$ and $Q_2^{\la i\ra}$ each stand for either $I$ or $\widetilde{B}^{\la i\ra}$. Recalling that $\mathbf{w}_i=\mathbf{e}_i+\mathbf{g}_i$ and that $\mathbf{g}_i\sim \mathcal{N}_{\C}(0,N^{-1}I)$ is a complex Gaussian vector, we compute
\begin{align}\label{gaussian estimates auxiliary}
 \E_{\mathbf{g}_i}\mathbf{w}_i^* Q_1^{\la i\ra}G^{[i]}Q_2^{\la i\ra}\mathbf w_i=(Q_1^{\la i\ra}G^{[i]}Q_2^{\la i\ra})_{ii}+\ntr Q_1^{\la i\ra}G^{[i]}Q_2^{\la i\ra}
 \,.\end{align}
 To bound the right side of~\eqref{gaussian estimates auxiliary} we observe that $ |(Q_1G^{[i]}Q_2)_{ii}|\prec |G_{ii}^{[i]}|\prec 1$, where we used that $\mathbf{e}_i$ is an eigenvector of $\widetilde{B}^{\la i\ra}$ and~\eqref{091031}.
   (Notice that, for simplicity,  here and at several other places  we consistently use the notation $\prec$
  even when the stronger  $\leq$ or $\lesssim$ relations would also hold, \ie we use the concept stochastic domination
  even for estimating    almost surely bounded or  deterministic quantities.)

  To control the second term on the right side of~\eqref{gaussian estimates auxiliary}, we note that a first order Neumann expansion of the resolvents yields 
 \begin{align}\label{on the way}
   |\ntr Q_2^{\la i\ra}Q_1^{\la i\ra}G^{[i]}-\ntr Q_2^{\la i\ra}Q_1^{\la i\ra}G^{\la i\ra}|&=|\ntr Q_2^{\la i\ra}Q_1^{\la i\ra}G^{[i]}(b_i+\omega_B(z)-z)\mathbf{e}_i\mathbf{e}_i^*G^{\la i\ra}|\nonumber\\
   &\prec \frac{1}{N}\|Q_2^{\la i\ra}Q_1^{\la i\ra}G^{[i]}\mathbf{e}_i\mathbf{e}_i^*\|_2\|\mathbf{e}_i\mathbf{e}_i^*G^{\la i\ra}\|_2\nonumber\\
   &\prec \frac{1}{N}\|G^{[i]}\mathbf{e}_i\mathbf{e}_i^*\|_2\|\mathbf{e}_i\mathbf{e}_i^*G^{\la i\ra}\|_2\nonumber\\
   &\prec\frac{1}{N}{|(\mathbf{e}_i^*|G^{[i]}|^2\mathbf{e}_i)|}^{1/2}\,{|(\mathbf{e}_i^*|G^{\la i\ra}|^2\mathbf{e}_i)|}^{1/2}\,,
 \end{align}
where we used the boundedness of $b_i$, $\omega_B(z)$, $\|Q_1^{\la i\ra}\|$ and $\|Q_2^{\la i\ra}\|$. Notice next the identities
\begin{align}\label{wward}
 (|G^{[i]}(z)|^2)_{jj}=\frac{\im G^{[i]}_{jj}(z)}{(1-\delta_{ij})\eta+\delta_{ij}\im \omega_B(z)}\,,\qquad (|G^{\la i\ra}(z)|^2)_{jj}=\frac{\im G^{\la i\ra}_{jj}(z) }{\eta}\,,
\end{align}
for $j\in\llbracket 1,N\rrbracket$, with $z=E+\ii\eta$ and $|G|^2=G^*G$. The identities in~\eqref{wward} follow directly from the definitions in~\eqref{091401},~\eqref{090820} and the definition of the Green function. Since $|G_{ii}^{\la i\ra}|\prec \frac1\eta$ (\cf~\eqref{091405}), we obtain combining~\eqref{on the way} and~\eqref{wward} with~\eqref{090830} that
 \begin{align}\label{GGG}
  |\ntr Q_2^{\la i\ra}Q_1^{\la i\ra}G^{[i]}-\ntr Q_2^{\la i\ra}Q_1^{\la i\ra}G^{\la i\ra}|&\prec \frac{1}{N\eta}\,.
 \end{align}
Since $H^{\la i\ra }$ is a Hermitian finite-rank perturbation of $H$, we can apply~\eqref{091002} to conclude that 
\begin{equation}\label{QQQ}
|\ntr Q_2^{\la i\ra}Q_1^{\la i\ra}G-\ntr Q_2^{\la i\ra}Q_1^{\la i\ra}G^{\la i\ra}|\prec \frac1{N\eta}\, .
\end{equation}
 We will now show that $\ntr Q_1^{\la i\ra}G^{\la i \ra} Q_2^{\la i\ra}$ is bounded.
 Using the resolvent identities and $\ntr B^{\la i \ra} =\ntr B=0$,  we get
$$
   \ntr B^{\la i \ra} G^{\la i\ra} = 1- \ntr (A-z) G^{\la i\ra}, \qquad 
    \ntr B^{\la i \ra} G^{\la i\ra} B^{\la i \ra} = z+ \ntr (A-z) G^{\la i\ra} (A-z)\,,
$$ 
thus to control $\ntr Q_2^{\la i\ra}Q_1^{\la i\ra}G^{\la i\ra}$ we need to bound $\ntr (A-z)^k G^{\la i\ra}$
for $k=0,1,2$.
 Since $H^{\la i\ra }$ is a Hermitian finite-rank perturbation of $H$, we can apply~\eqref{091002} to conclude that 
\begin{align}\label{trGG}
|\ntr  (A-z)^k G^{\la i\ra} -\ntr  (A-z)^k G | & \prec \frac1{N\eta}\,,\quad \qquad k=0,1,2.
\end{align} 
Since  $A$ is diagonal with bounded matrix elements, we have $\ntr (A-z)^k G \lesssim \max_j |G_{jj}|\prec 1$,
where the last bound comes from \eqref{090830}.
This directly controls  $\ntr Q_2^{\la i\ra}Q_1^{\la i\ra}G^{\la i\ra}$ and then, using
\eqref{GGG} and  \eqref{QQQ},  we have 
\begin{equation}\label{trQGQ} 
   |\ntr Q_2^{\la i\ra}Q_1^{\la i\ra}G^{\la i \ra}| + |\ntr Q_2^{\la i\ra}Q_1^{\la i\ra}G| +  |\ntr Q_2^{\la i\ra}Q_1^{\la i\ra}G^{[i]}|\prec 1\, .
\end{equation}
 Thus, returning to~\eqref{gaussian estimates auxiliary}, we showed
\begin{align}\label{le expecation is order one}
 \E_{\mathbf{g}_i}\mathbf{w}_i^* Q_1^{\la i\ra}G^{[i]}Q_2^{\la i\ra}\mathbf w_i \prec   1 \, .
\end{align}

Using the Gaussian concentration estimates in~\eqref{091731} and $\mathbf{w}_i=\mathbf{e}_i+\mathbf{g}_i$, we obtain
 \begin{align}\label{le mini fluctuations}
 |\IE_{\mathbf{g}_i}\mathbf{w}_i^* Q_1^{\la i\ra}G^{[i]}(z)Q_2^{\la i\ra}\mathbf w_i |&\prec \left({\frac{(|Q_1^{\la i\ra}G^{[i]}(z)Q_2^{\la i\ra}|^2)_{ii}}{N}}\right)^{\frac12}+\left({\frac{\| Q_1^{\la i\ra}G^{[i]}(z)Q_2^{\la i\ra}\|_2^2}{N^2}}\right)^{\frac12}\nonumber\\
 &\prec \left({\frac{\im G^{[i]}_{ii}(z)}{N\im\omega_B(z)}}\right)^{\frac12}+\left({\frac{\im \ntr G^{[i]}(z)}{N\eta}}\right)^{\frac12} \prec \frac{1}{\sqrt{N\eta}} \,,
\end{align}
where we also used that $\mathbf{e}_i$ is an eigenvector of $\widetilde{B}^{\la i\ra}$, that $\widetilde{B}^{\la i\ra}$ is bounded and~\eqref{wward}. In the last step \eqref{091031} and \eqref{trQGQ} were used.
Combined with \eqref{le expecation is order one} we thus proved
\begin{equation}\label{le hura1}
\mathbf{w}_i^* Q_1^{\la i\ra}G^{[i]}(z)Q_2^{\la i\ra}\mathbf w_i \prec 1.
\end{equation}

 For a later use we remark that, combining~\eqref{QQQ} and~\eqref{le mini fluctuations}, we also proved
\begin{align}\label{le hura}
 \mathbf{w}_i^* Q_1^{\la i\ra}G^{[i]}(z)Q_2^{\la i\ra}\mathbf w_i =(Q_1^{\la i\ra}G^{[i]}Q_2^{\la i\ra})_{ii}+
  \ntr Q_1^{\la i\ra}GQ_2^{\la i\ra}+\OSD\Big(\frac{1}{\sqrt{N\eta}}\Big)\,.
\end{align}

In a very similar way we get, recalling that $\ntr B=0$ and $\|B\|\prec 1$, that
\begin{align}\label{le B}
\mathbf{w}_i^*\wt{B}^{\la i\ra}\mathbf{w}_i=b_i+\IE_{\mathbf{g}_i} \mathbf{w}_i^*\wt{B}^{\la i\ra}\mathbf{w}_i=b_i+\OSD\Big(\frac{1}{\sqrt{N}}\Big)\,.
\end{align}

To deal with terms containing four or six factors of $\mathbf{w}_i$ in $\IE_{\mathbf{g}_i}[\Xi_i]$ (see~\eqref{091040}), we use the following rough bound. For general random variables $X$ and $Y$ satisfying $|X|,|Y|\prec 1$, we have
\begin{align}\label{le IE correlations}
 \IE_{\mathbf{g}_i}[XY]=\IE_{\mathbf{g}_i}[\IE_{\mathbf{g}_i}[X] \IE_{\mathbf{g}_i}[Y]]+\IE_{\mathbf{g}_i}[\IE_{\mathbf{g}_i}[X]\E_{\mathbf{g}_i} [Y] ]+\IE_{\mathbf{g}_i}[\E_{\mathbf{g}_i} [X]\IE_{\mathbf{g}_i}[Y]]\,.
\end{align}
In particular we have $|\IE_{\mathbf{g}_i}[ XY]|\prec |\IE_{\mathbf{g}_i} X|+|\IE_{\mathbf{g}_i} Y|$, where we used basic properties of stochastic domination outlined in Subsection~\ref{stochastic domination section}.

Then, recalling the explicit form of $\Xi_i$ in~\eqref{091040} and  using~\eqref{le mini fluctuations},~\eqref{le hura1},~\eqref{le B} and~\eqref{le IE correlations}, a straightforward estimate shows that $|\IE_{\mathbf{g}_i}\Xi_i|\prec \frac{1}{\sqrt{N\eta}}$, and claim $(i)$ in~\eqref{092010} is thus proved.

We next show statement $(ii)$ of~\eqref{092010}. To compute the expectation $\E_{\mathbf{g}_i}[\Xi_i]$, we are going to use the identities
\begin{align}\label{le computation of traces}
 \ntr\wt{B}G=1-\ntr(A-z)G\,,\qquad\qquad \ntr\wt{B}G\wt{B}=z+\ntr(A-z)G(A-z)\,,
\end{align}
that follow from $(H-z)G(z)=1$ and $\ntr A=\ntr B=0$.
Invoking assumption~\eqref{090830} we have
\begin{align*}
\ntr \big((A-z)^kG\big)=\frac{1}{N}\sum_{i=1}^N \frac{(a_i-z)^k}{a_i-\omega_B}+\OSD(N^{-\frac{\gamma}{4}})\,,
\end{align*}
with~$k\in\N$. Recalling further the shorthand notation $m_\boxplus\equiv m_{\mu_A\boxplus\mu_B}$ and from~\eqref{060101} that
\begin{align}\label{le m shorthand notation Kevin}
 m_\boxplus=\frac{1}{N}\sum_{i=1}^N\frac{1}{a_i-\omega_B}\,,
\end{align}
we get from the above that
\begin{align}
\ntr G&=m_{\boxplus}+\OSD(N^{-\frac{\gamma}{4}})\,,\nonumber\\
 \ntr \big((A-z)G\big)&=1+(\omega_B-z)m_{\boxplus}+\OSD(N^{-\frac{\gamma}{4}})\,,\nonumber\\
 \ntr \big((A-z)^2G\big)&=\omega_B-2z+(\omega_B-z)^2m_{\boxplus}+\OSD(N^{-\frac{\gamma}{4}})\,. \label{091901}
\end{align}
Thus from~\eqref{le hura} we obtain
\begin{align}
\mathbf{w}_i^*G^{[ i]}\mathbf{w}_i&=m_{\boxplus}+\frac{1}{a_i-\omega_B}+\OSD(N^{-\frac{\gamma}{4}})\,,\nonumber\\
\mathbf{w}_i^*\widetilde{B}^{\la i\ra} G^{ [i]} \mathbf{w}_i&=-(\omega_B-z)m_{\boxplus}+\frac{b_i}{a_i-\omega_B}+\OSD(N^{-\frac{\gamma}{4}})\,,\nonumber\\
\mathbf{w}_i^*G^{[ i]} \widetilde{B}^{\la i\ra}\mathbf{w}_i&=-(\omega_B-z)m_{\boxplus}+\frac{b_i}{a_i-\omega_B}+(N^{-\frac{\gamma}{4}})\,,\nonumber\\
\mathbf{w}_i^*\wt{B}^{\la i\ra} G^{[ i]} \wt{B}^{\la i\ra}\mathbf{w}_i&=(\omega_B-z)+(\omega_B-z)^2 m_{\boxplus}+\frac{b_i^2}{a_i-\omega_B}+\OSD(N^{-\frac{\gamma}{4}})\,.\label{092021}
 \end{align}
Plugging~\eqref{092021} into ~\eqref{091040}, using  the identity $\omega_A+\omega_B=z-1/m_{\boxplus}$ and taking the expectation, a straightforward computation shows that 
\begin{align}
1+\mathbb{E}_{\mathbf{g}_i}[\Xi_i]=\frac{(\omega_A-b_i)(2\omega_B-a_i+b_i-z)m_\boxplus}{a_i-\omega_B}+\OSD (N^{-\frac{\gamma}{4}})\,. \label{0911300}
\end{align}
Then from Lemma~\ref{cor.080601} one observes that statement $(ii)$ of~\eqref{092010} holds. In fact, the first term on the right side of~\eqref{0911300} is bounded away from zero uniformly on $z\in \mathcal{S}_{\mathcal{I}}(\eta_{\mathrm{m}},1)$.

We move on to statement $(iii)$ of~\eqref{092010}. Let $Q_1^{\la i\ra}$ and $Q_2^{\la i\ra}$ each stand again for either~$I$ or~$\widetilde{B}^{\la i\ra}$. Then we note that
\begin{align}\label{le bubu}
 \mathbf{e}_i^* Q_1^{\la i\ra}G^{[i]}Q_2^{\la i\ra}\mathbf{w}_i=(Q_1^{\la i\ra}G^{[i]}Q_2^{\la i\ra})_{ii}+\OSD\Big(\frac{1}{\sqrt{N}}\Big)\,,
\end{align}
as follows from the Gaussian large deviation estimates in~\eqref{091731}, assumption~\eqref{090830} and the fact that $\mathbf{e}_i$ is an eigenvector of $Q_1^{\la i\ra}$, $Q_2^{\la i\ra}$ and $G^{[i]}$. Having established~\eqref{le bubu}, it suffices to recall~\eqref{le hura} and~\eqref{le B} to conclude that $|\IE_{\mathbf{g}_i}[\Psi_{i,i}]|\prec \frac{1}{\sqrt{N\eta}}$. This proves claim $(iii)$ in~\eqref{092010} and thus completes the proof of Lemma~\ref{lem090501}.
\end{proof}

\begin{corollary}  \label{cor090501}  Suppose that the assumptions of Theorem~\ref{thm.091201} are satisfied and let $\gamma>0$. Fix $z=E+\ii\eta\in \mathcal{S}_{\mathcal{I}}(\eta_{\mathrm{m}},1)$ and assume that 
\begin{align}
\big| G_{ii}^{(i)}(z)-(a_i-\omega_B(z))^{-1}\big|\prec N^{-\frac{\gamma}{4}}\,,\quad   \big| G_{ii}(z)-(a_i-\omega_B(z))^{-1}\big| \prec N^{-\frac{\gamma}{4}},\label{09083000}
\end{align}
hold for all $i\in\llbracket 1,N\rrbracket$. Letting $Q_1^{\la i \ra}$, $Q_2^{\la i \ra}$ stand for $I$ or $\wt{B}^{\la i\ra}$, and letting $\mathbf{x}_i$, $\mathbf{y}_i$ stand for~$\mathbf{g}_i$ or~$\mathbf{e}_i$, we have the bound
\begin{align}
\max_{i\in\llbracket 1,N\rrbracket}\big|\mathbf{x}_i^*Q_1^{\la i \ra}G^{(i)}(z) Q_2^{\la i \ra}\mathbf{y}_i|&\prec 1\,.\label{091090}
\end{align}
In particular, $|S_i(z)|, |T_i(z)|\prec1$, for all $i\in \llbracket 1,N\rrbracket$. Moreover, we have
\begin{align}
\max_{i\in\llbracket 1,N\rrbracket}\big|\IE_{\mathbf{g}_i}[T_i(z)]\big|\prec \frac{1}{\sqrt{N\eta}}\,,\qquad \qquad \max_{i\in\llbracket 1,N\rrbracket}\big|\IE_{\mathbf{g}_i}[S_i(z)]\big|\prec \frac{1}{\sqrt{N\eta}}\,.\label{0907101}
\end{align}
\end{corollary}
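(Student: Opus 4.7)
The plan is to prove~\eqref{091090} and~\eqref{0907101} in two stages, both leveraging the identities and bilinear-form estimates already developed in the proof of Lemma~\ref{lem090501}.

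\textbf{Stage 1: the uniform bound \eqref{091090}.} When $\mathbf{x}_i = \mathbf{y}_i = \mathbf{e}_i$ the quantity equals $b_i^{k_1+k_2} G^{(i)}_{ii}$ with $k_1,k_2\in\{0,1\}$, since $\mathbf{e}_i$ is an eigenvector of $\wt{B}^{\la i\ra}$; hence this is $\prec 1$ by the hypothesis~\eqref{09083000} and the boundedness of the $b_i$. When at least one of $\mathbf{x}_i,\mathbf{y}_i$ equals $\mathbf{g}_i$, I write $\mathbf{g}_i = \mathbf{w}_i - \mathbf{e}_i$ and reduce to bilinear forms of the shape $\mathbf{w}_i^* Q_1^{\la i\ra} G^{(i)} Q_2^{\la i\ra} \mathbf{w}_i$ and $\mathbf{e}_i^* Q_1^{\la i\ra} G^{(i)} Q_2^{\la i\ra} \mathbf{w}_i$. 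Both are expanded through the rank-one identities~\eqref{091001} and~\eqref{090755} (passing from $G^{(i)}$ to $G^{\{i\}}$ and then to $G^{[i]}$), producing rational combinations of bilinear forms $\mathbf{w}_i^* Q G^{[i]} Q' \mathbf{w}_i$, $\mathbf{w}_i^* Q G^{[i]} \mathbf{e}_i$, and $G^{[i]}_{ii}$. All three types are $\prec 1$ by the estimates already proved inside Lemma~\ref{lem090501} (namely~\eqref{le hura1},~\eqref{091031}, and~\eqref{le bubu}), and the relevant denominators are bounded away from zero by~\eqref{100505} and~\eqref{0911300}. Specializing to $\mathbf{x}_i = \mathbf{g}_i$, $\mathbf{y}_i = \mathbf{e}_i$ yields $|S_i|,|T_i|\prec 1$.

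\textbf{Stage 2: the concentration bound \eqref{0907101}.} Using $\mathbf{g}_i = \mathbf{w}_i - \mathbf{e}_i$ together with $\wt{B}^{\la i\ra} \mathbf{e}_i = b_i \mathbf{e}_i$, I decompose
\begin{align*}
T_i = \mathbf{w}_i^* G^{(i)} \mathbf{e}_i - G^{(i)}_{ii}, \qquad S_i = \mathbf{w}_i^* \wt{B}^{\la i\ra} G^{(i)} \mathbf{e}_i - b_i\, G^{(i)}_{ii}.
\end{align*}
The $G^{(i)}_{ii}$ and $b_i G^{(i)}_{ii}$ contributions have $\IE_{\mathbf{g}_i}$-fluctuations $\prec 1/\sqrt{N\eta}$ by~\eqref{0907100}. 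The remaining linear forms $\mathbf{w}_i^* Q^{\la i\ra} G^{(i)} \mathbf{e}_i$ with $Q^{\la i\ra}\in\{I,\wt{B}^{\la i\ra}\}$ are expanded via~\eqref{091001} and~\eqref{090755} into rational expressions whose numerators and denominators involve only scalar bilinear forms against the $\mathbf{g}_i$-independent matrix $G^{[i]}$. Each such form fluctuates by at most $1/\sqrt{N\eta}$ around its $\E_{\mathbf{g}_i}$-expectation, by Gaussian concentration (Lemma~\ref{lem.091720}) as in the computation leading to~\eqref{le mini fluctuations}; the identity~\eqref{le IE correlations} propagates this concentration through products. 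Since all denominators are bounded below by a positive constant via~\eqref{0911300}, the desired bound $\prec 1/\sqrt{N\eta}$ for $|\IE_{\mathbf{g}_i}[T_i]|$ and $|\IE_{\mathbf{g}_i}[S_i]|$ follows.

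The principal obstacle is bookkeeping: the cascade $G^{(i)} \to G^{\{i\}} \to G^{[i]}$ generates a profusion of cross-terms. The essential simplification is that every resulting factor is either a bilinear form in $\mathbf{g}_i$ with the $\mathbf{g}_i$-independent coefficient matrix $G^{[i]}$, for which Gaussian large deviations apply cleanly, or a denominator already shown to be well-separated from zero in the proof of Lemma~\ref{lem090501}; no new stability analysis is required.
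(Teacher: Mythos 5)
Your proposal is correct and follows essentially the same route as the paper's proof: for both~\eqref{091090} and~\eqref{0907101}, you run the same rank-one perturbation cascade $G^{(i)}\to G^{\{i\}}\to G^{[i]}$ (via~\eqref{091001} and~\eqref{090755}), invoke the bilinear-form bounds and Gaussian concentration estimates already established inside the proof of Lemma~\ref{lem090501} (\eqref{le hura1}, \eqref{le bubu}, \eqref{le mini fluctuations}, \eqref{le IE correlations}), and control the denominators through~\eqref{100505} and~\eqref{0911300}. The only cosmetic deviation is that you split $\mathbf{g}_i=\mathbf{w}_i-\mathbf{e}_i$ explicitly before expanding, whereas the paper keeps $\mathbf{g}_i$ as is and re-derives the analogue of the algebraic identity~\eqref{091004} with $\mathbf{g}_i^*G^{(i)}\mathbf{e}_i$ in place of $G_{ii}^{(i)}$, which makes the handling of the fluctuating denominator a bit more transparent; both choices lead to the same estimate.
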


\begin{proof}
Using once more~\eqref{091002Kevin}, we can write
\begin{align*}
\mathbf{x}_i^*Q_1^{\la i \ra}G^{(i)} Q_2^{\la i \ra}\mathbf{y}_i=\mathbf{x}_i^*Q_1^{\la i \ra}G^{\{i\}}Q_2^{\la i \ra}\mathbf{y}_i-\frac{(b_i+\omega_B-z)\;\mathbf{x}_i^*Q_1^{\la i \ra}G^{\{i\}}\mathbf{e}_i\; \mathbf{e}_i^*G^{\{i\}}Q_2^{\la i \ra}\mathbf{y}_i}{1+(b_i+\omega_B-z)G^{\{i\}}_{ii}}.
\end{align*}
Hence to prove the bound in~\eqref{091090} it suffices to bound $\mathbf{x}_i^*Q_1^{\la i \ra}G^{\{i\}}Q_2^{\la i \ra}\mathbf{y}_i$ and $\mathbf{x}_i^*Q_1^{\la i \ra}G^{\{i\}}\mathbf{e}_i$ with the choices $Q_1^{\la i \ra},Q_2^{\la i \ra}=I$ or $\wt{B}^{\la i\ra}$ and $\mathbf{x}_i,\mathbf{y}_i=\mathbf{g}_i$ or $\mathbf{e}_i$. To do so, we expand~$G^{\{i\}}$ around~$G^{[i]}$. It turns out that $\mathbf{x}_i^*Q_1^{\la i \ra}G^{\{i\}}Q_2^{\la i \ra}\mathbf{y}_i$ and $\mathbf{x}_i^*Q_1^{\la i \ra}G^{\{i\}}\mathbf{e}_i$  both are of the form $\widetilde\Psi_i/(1+\Xi_i)$, where $\Xi_i$ is given in~\eqref{090837} and  $\widetilde\Psi_i$ is a polynomial of the quantities appearing in~\eqref{le hura},~\eqref{le B} and~\eqref{le bubu}. Then $(i)$ and $(ii)$ of~\eqref{092010} imply that $(1+\Xi_i)^{-1}\prec 1$, which together with the bounds in~\eqref{le hura} and~\eqref{le bubu} leads to the conclusion~\eqref{091090}.

 To prove~\eqref{0907101}, we follow, mutatis mutandis, the proof of~\eqref{0907100} by replacing $G_{ii}^{(i)}$ by $T_i=\mathbf{g}_i^*G^{(i)}\mathbf{e}_i$ or  $S_i=\mathbf{g}_i^*\wt{B}^{\la i\ra}G^{(i)}\mathbf{e}_i$. For instance, for $T_i$ the counterpart of~\eqref{091004} is
 \begin{align*}
 \mathbf{g}_i^*G^{(i)}\mathbf{e}_i=\frac{\mathbf{g}_i^*G^{\{i\}}\mathbf{e}_i-(b_i+\omega_B-z)\IE_{\mathbf{g}_i}\big[G^{\{i\}}_{ii}\big]\,\mathbf{g}_i^*G^{(i)}\mathbf{e}_i}{1+(b_i+\omega_B-z)\mathbb{E}_{\mathbf{g}_i}\big[G^{\{i\}}_{ii}\big]}\,.
 \end{align*}
 Now, according to~\eqref{100505},~\eqref{091012} and the bound $|T_i|\prec 1$ (\cf~\eqref{091090}), it suffices to show 
 \begin{align}
 \big|\IE_{\mathbf{g}_i}\big[\mathbf{g}_i^*G^{\{i\}}\mathbf{e}_i\big]\big|\prec \frac{1}{\sqrt{N\eta}}\,. \label{100601}
 \end{align}
The proof of~\eqref{100601} is nearly the same as the one of~\eqref{091012}. One can also use a similar argument for $S_i$ by using the bound $|S_i|\prec 1$ from~\eqref{091090}. We omit the details.
\end{proof}

\section{Identification of the partial expectation $\mathbb{E}_{\mathbf{g}_i}\big[G_{ii}^{(i)}\big]$} \label{section partial concentration}
In this section, we estimate the partial expectation $\mathbb{E}_{\mathbf{g}_i}\big[G_{ii}^{(i)}\big]$, which together with the concentration inequalities in Lemma~\ref{lem090501} lead to the following lemma. 
Recall the definition of $S_i$ and $T_i$ in~\eqref{le T and S}.
\begin{proposition}\label{lem.0910111}Suppose that the assumptions of Theorem~\ref{thm.091201} are satisfied and let $\gamma>0$. Fix $z=E+\ii\eta\in \mathcal{S}_{\mathcal{I}}(\eta_{\mathrm{m}},1)$. Assume that 
\begin{align}
\Big| G_{ii}^{(i)}(z)-\big(a_i-\omega_B(z)\big)^{-1}\Big| \prec N^{-\frac{\gamma}{4}}\,,\quad   \Big| G_{ii}(z)-\big(a_i-\omega_B(z)\big)^{-1}\Big| \prec N^{-\frac{\gamma}{4}} \,, \label{091071}
\end{align}
hold uniformly in $i\in\llbracket 1,N\rrbracket$. Then,
\begin{align}
\max_{i\in\llbracket 1,N\rrbracket}\Big|G_{ii}^{(i)}(z)-\big(a_i-{ \omega_B^c(z)}\big)^{-1}\Big|\prec \frac{1}{\sqrt{N\eta}}\,, \label{091151}
\end{align}
and
\begin{align}
\max_{i\in\llbracket 1,N\rrbracket}\bigg|S_i(z)+\frac{z-\omega_B^c(z)}{a_i-\omega_B^c(z)}\bigg|\prec\frac{1}{\sqrt{N\eta}}\,,\qquad\qquad \max_{i\in\llbracket 1,N\rrbracket}\big|T_i(z)|\prec\frac{1}{\sqrt{N\eta}}\,. \label{091152}
\end{align}
\end{proposition}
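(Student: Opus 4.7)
The plan is to compute the partial expectation $\mathbb{E}_{\mathbf{g}_i}[G^{(i)}_{ii}]$ precisely via a resolvent identity combined with Gaussian integration by parts, and then to convert partial expectations back to the random quantities $G^{(i)}_{ii}$, $S_i$, $T_i$ using the concentration estimates of Lemma~\ref{lem090501} and Corollary~\ref{cor090501}, all at precision $\OSD(1/\sqrt{N\eta})$.

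\emph{Step 1 (Resolvent identity).} Taking the $(i,i)$-entry of $(H^{(i)}-z) G^{(i)} = I$ and expanding $\wt{B}^{(i)} = W_i \wt{B}^{\la i\ra} W_i$ via the explicit identity $W_i\mathbf{e}_i = -\mathbf{g}_i - g_{ii}\mathbf{w}_i$, using $\wt{B}^{\la i\ra}\mathbf{e}_i = b_i \mathbf{e}_i$, the a priori bounds $|G^{(i)}_{ii}|, |T_i|\prec 1$ from Corollary~\ref{cor090501}, and the Gaussian concentration $|g_{ii}|, |\mathbf{g}_i^* \wt B^{\la i\ra} \mathbf{g}_i| \prec N^{-1/2}$ (the latter using $\ntr B = 0$ via Lemma~\ref{lem.091720}), I would reduce the identity to
\begin{align*}
(a_i - z)\, G^{(i)}_{ii}(z) - S_i(z) = 1 + \OSD(N^{-1/2}).
\end{align*}

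\emph{Step 2 (Gaussian integration by parts).} Next I would compute $\E_{\mathbf{g}_i}[S_i]$ and $\E_{\mathbf{g}_i}[T_i]$ by integration by parts for $\mathbf{g}_i \sim \mathcal{N}_{\mathbb C}(0, N^{-1}I)$, differentiating $G^{(i)}$ through $\partial_{g_{ik}} W_i = -\mathbf{e}_k \mathbf{w}_i^*$. The quadratic forms $\mathbf{w}_i^* Q_1^{\la i\ra} G^{(i)} Q_2^{\la i\ra}\mathbf{w}_i$ (with $Q_1^{\la i\ra}, Q_2^{\la i\ra} \in \{I, \wt B^{\la i\ra}\}$) that appear are then replaced by their $\E_{\mathbf{g}_i}$-means modulo $\OSD(1/\sqrt{N\eta})$ via Corollary~\ref{cor090501}, and are rewritten in terms of diagonal entries plus traces of $G$ using~\eqref{le hura}, Lemma~\ref{lem.091060}, the rank-one perturbation estimate~\eqref{091002}, and the trace identities~\eqref{le computation of traces}. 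This should yield the coupled system
\begin{align*}
\E_{\mathbf{g}_i}[S_i] &= \ntr(\wt B G)\,\bigl(\E_{\mathbf{g}_i}[S_i] - b_i \E_{\mathbf{g}_i}[T_i]\bigr) + \ntr(\wt B G \wt B)\,\bigl(G^{(i)}_{ii} + \E_{\mathbf{g}_i}[T_i]\bigr) + \OSD(1/\sqrt{N\eta}),\\
\E_{\mathbf{g}_i}[T_i] &= \ntr(G)\,\bigl(\E_{\mathbf{g}_i}[S_i] - b_i \E_{\mathbf{g}_i}[T_i]\bigr) + \ntr(\wt B G)\,\bigl(G^{(i)}_{ii} + \E_{\mathbf{g}_i}[T_i]\bigr) + \OSD(1/\sqrt{N\eta}).
\end{align*}
Solving this $2\times 2$ linear system for $\E_{\mathbf{g}_i}[S_i]$ yields
\begin{align*}
\E_{\mathbf{g}_i}[S_i] = -\frac{\ntr(\wt B G)}{\ntr G}\, G^{(i)}_{ii} + \mathcal{E}\,\bigl(G^{(i)}_{ii} + \E_{\mathbf{g}_i}[T_i]\bigr) + \OSD(1/\sqrt{N\eta}),
\end{align*}
where $\mathcal{E} = \frac{\ntr(\wt B G) - (\ntr \wt B G)^2}{\ntr G} + \ntr(\wt B G \wt B)$ is \emph{independent of $i$}.

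\emph{Step 3 (Averaging and conclusion).} To pin down $\mathcal{E}$, I would average Step~1 over $i$; combined with Lemma~\ref{lem090501}, Lemma~\ref{lem.091060}, and $\ntr(A-z)G + \ntr(\wt B G) = 1$ from~\eqref{le computation of traces}, this produces $|N^{-1}\sum_i \E_{\mathbf{g}_i}[S_i] + \ntr(\wt B G)| \prec 1/\sqrt{N\eta}$. Matching against the $i$-average of the formula from Step~2---whose leading term $-(\ntr(\wt B G)/\ntr G)\cdot N^{-1}\sum_i G^{(i)}_{ii}$ already reproduces $-\ntr(\wt B G)$---forces $\mathcal{E}\cdot\bigl(\ntr G + N^{-1}\sum_i \E_{\mathbf{g}_i}[T_i]\bigr) \prec 1/\sqrt{N\eta}$, and the prefactor is bounded away from $0$ since $|\ntr G|\gtrsim 1$ on the regular bulk by Lemma~\ref{cor.080601} and~\eqref{060101}. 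Thus $|\mathcal{E}|\prec 1/\sqrt{N\eta}$. By the definition~\eqref{091260} of $\omega_B^c$ this converts Step~2 into $\E_{\mathbf{g}_i}[S_i] = (\omega_B^c - z) G^{(i)}_{ii} + \OSD(1/\sqrt{N\eta})$, so inserting into Step~1 closes the equation to $(a_i - \omega_B^c) G^{(i)}_{ii} = 1 + \OSD(1/\sqrt{N\eta})$; since $|a_i - \omega_B^c|\gtrsim 1$ by Lemma~\ref{cor.080601} and~\eqref{091071}, this proves~\eqref{091151}. The $S_i$-estimate in~\eqref{091152} follows by combining this with the concentration $|S_i - \E_{\mathbf{g}_i}[S_i]|\prec 1/\sqrt{N\eta}$ from Corollary~\ref{cor090501}. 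Finally, substituting the known $\E_{\mathbf{g}_i}[S_i]$ into the second equation of Step~2, the $G^{(i)}_{ii}$-contribution cancels (again by the defining identity of $\omega_B^c$) and the coefficient of $\E_{\mathbf{g}_i}[T_i]$ simplifies to $m_\boxplus(b_i - \omega_A) + o(1)$ via $\omega_A + \omega_B = z - 1/m_\boxplus$, which is bounded away from $0$ on $\mathcal{I}$ by Lemma~\ref{cor.080601}; hence $|\E_{\mathbf{g}_i}[T_i]|\prec 1/\sqrt{N\eta}$, and Corollary~\ref{cor090501} completes~\eqref{091152}.

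The main obstacle is the bookkeeping in Step~2: carrying out every Gaussian integration by parts and correctly identifying the resulting $\la i\ra$- and $(i)$-resolvent traces with the target quantities $\ntr G$, $\ntr(\wt B G)$, and $\ntr(\wt B G \wt B)$ modulo $\OSD(1/\sqrt{N\eta})$ without accumulating factors of $\eta^{-1}$. This hinges on repeated, careful application of Corollary~\ref{cor090501}, Lemma~\ref{lem.091060}, the rank-one perturbation estimate~\eqref{091002}, and the hypothesis~\eqref{091071}.
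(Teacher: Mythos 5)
Your proposal follows the paper's strategy essentially step for step (resolvent identity, Gaussian integration by parts to obtain the coupled $2\times 2$ system for $\E_{\mathbf{g}_i}[S_i]$ and $\E_{\mathbf{g}_i}[T_i]$, solve for $\E_{\mathbf{g}_i}[S_i]$, average to extract the $i$-independent quantity $\mathcal{E}$, then plug back). However, Step~3 has a genuine gap. When you write that matching forces
$\mathcal{E}\cdot\bigl(\ntr G + N^{-1}\sum_i \E_{\mathbf{g}_i}[T_i]\bigr)\prec 1/\sqrt{N\eta}$
and that ``the prefactor is bounded away from $0$ since $|\ntr G|\gtrsim 1$'', this is not justified: the prefactor also contains the term $N^{-1}\sum_i\E_{\mathbf{g}_i}[T_i]$, and Corollary~\ref{cor090501} gives only $|T_i|\prec 1$, which allows $N^{-1}\sum_i\E_{\mathbf{g}_i}[T_i]$ to be of order $1$ and thus potentially cancel $\ntr G$. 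You need a preliminary, rough bound $|\E_{\mathbf{g}_i}[T_i]|\prec N^{-\gamma/4}$ \emph{before} the matching step, and nothing in your Steps~1--2 supplies it. The paper closes this by first approximately solving the second equation of the coupled system for $\E_{\mathbf{g}_i}[T_i]$ using the a priori hypothesis~\eqref{091071}: inserting $\E_{\mathbf{g}_i}[S_i]\approx -(z-\omega_B)/(a_i-\omega_B)$ (from Step~1 and concentration) and the trace identities gives $\bigl|(1-\ntr(\wt B G)+b_i\ntr G)\,\E_{\mathbf{g}_i}[T_i]\bigr|\prec N^{-\gamma/4}$, and the coefficient equals $|(b_i-\omega_A)m_\boxplus|+\OSD(N^{-\gamma/4})\gtrsim 1$ by $\omega_A+\omega_B=z-1/m_\boxplus$ and Lemma~\ref{cor.080601}. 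Only with this $N^{-\gamma/4}$-bound on $\E_{\mathbf{g}_i}[T_i]$ in hand is the denominator in the averaging step pinned near $\ntr G$, and hence bounded away from zero. Inserting that missing step repairs your argument; the rest of your outline is sound, and your final $T_i$-estimate at $1/\sqrt{N\eta}$ precision (a second pass through the same equation after $\mathcal{E}$ is known) matches the paper.
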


In the proof of Proposition~\ref{lem.0910111} we will need the following auxiliary lemma whose proof is postponed to the very end of this section.

\begin{lemma} \label{lem.0910115} Under the assumption of Proposition~\ref{lem.0910111}, the estimates
\begin{align}
\big|\ntr \big(\widetilde{B}^{\la i\ra}G^{(i)}(z)-\widetilde{B}G(z)\big)\big|\leq \frac{C}{N\eta}\,,\qquad \big|\ntr \big(\widetilde{B}^{\langle i\rangle}G^{(i)}(z)\widetilde{B}^{\langle i\rangle}-\widetilde{B}G(z)\widetilde{B}\big)\big|\leq \frac{C}{N\eta}\,, \label{091130}
\end{align}
and the bounds
\begin{align}
\big| \ntr  \big(\widetilde{B}^{\langle i\rangle}G^{(i)}(z)\big)\big|\prec 1\,,\qquad  \big|\ntr \big(\widetilde{B}^{\langle i\rangle}G^{(i)}(z)\widetilde{B}^{\langle i\rangle}\big)\big|\prec 1\,, \label{0910113}
\end{align}
hold uniformly in $i\in\llbracket 1,N \rrbracket$. Furthermore the estimates
\begin{align}
\big| \IE_{\mathbf{g}_i} \big[\ntr \big(\widetilde{B}^{\langle i\rangle}G^{(i)}(z)\big)\big]\big| \leq \frac{C}{N\eta}\,,\qquad
\big|\IE_{\mathbf{g}_i}\big[\ntr \big(\widetilde{B}^{\langle i\rangle}G^{(i)}(z)\widetilde{B}^{\langle i\rangle}\big)\big]\big|\leq \frac{C}{N\eta}\,, \label{090901}
\end{align}
hold uniformly in $i\in\llbracket 1,N \rrbracket$.
\end{lemma}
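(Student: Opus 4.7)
Two structural observations drive the proof. First, since $R_i = I - \mathbf{r}_i\mathbf{r}_i^*$ and $W_i = I - \mathbf{w}_i\mathbf{w}_i^*$ are rank-one modifications of the identity (and $R_i^2 = I$), each of the three pairs of Hamiltonians $(H, H^{\la i\ra})$, $(H^{\la i\ra}, H^{(i)})$ and $(H, H^{(i)})$ differs by a Hermitian matrix of rank at most $4$ and uniformly bounded operator norm. Likewise, $\widetilde{B} - \widetilde{B}^{\la i\ra} = R_i \widetilde B^{\la i\ra} R_i - \widetilde B^{\la i\ra}$ and $\widetilde B^2 - (\widetilde B^{\la i\ra})^2 = R_i(\widetilde B^{\la i\ra})^2 R_i - (\widetilde B^{\la i\ra})^2$ are both rank-$\le 2$ matrices with uniformly bounded operator norm. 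Second, the matrices $\widetilde B^{\la i\ra}$, $H^{\la i\ra}$ and $G^{\la i\ra}$ depend only on $U^{\la i\ra}$, hence are independent of $\mathbf g_i$.

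For~\eqref{091130} the plan is to split
\begin{align*}
\ntr\bigl(\widetilde B^{\la i\ra}G^{(i)} - \widetilde B G\bigr) = \ntr\bigl(\widetilde B^{\la i\ra}(G^{(i)} - G)\bigr) + \ntr\bigl((\widetilde B^{\la i\ra} - \widetilde B) G\bigr).
\end{align*}
The first summand is $\le C/(N\eta)$ by~\eqref{091002} with $Q = \widetilde B^{\la i\ra}$ and the rank-$\le 4$ Hermitian perturbation $R = H - H^{(i)}$. The second is handled by writing $\widetilde B - \widetilde B^{\la i\ra}$ as a sum of at most two rank-one matrices $\mathbf x_k \mathbf y_k^*$ with uniformly bounded $\|\mathbf x_k\|,\|\mathbf y_k\|$ and using $|\ntr(\mathbf x \mathbf y^* G)| = N^{-1}|\mathbf y^* G \mathbf x| \le \|G\|/N \le 1/(N\eta)$. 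The second inequality of~\eqref{091130} is proved identically after rewriting $\ntr(\widetilde B^{\la i\ra}G^{(i)}\widetilde B^{\la i\ra}) = \ntr((\widetilde B^{\la i\ra})^2 G^{(i)})$ and exploiting the rank-$\le 2$ structure of $(\widetilde B^{\la i\ra})^2 - \widetilde B^2$.

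The bounds~\eqref{0910113} then follow immediately from~\eqref{091130} combined with the resolvent identities~\eqref{le computation of traces} and the asymptotic formulas~\eqref{091901}, which are consequences of the assumption~\eqref{091071} and Lemma~\ref{cor.080601}. Indeed, those identities yield $\ntr(\widetilde B G) = -(\omega_B - z)m_{\boxplus} + \OSD(N^{-\gamma/4})$ and $\ntr(\widetilde B G \widetilde B) = \omega_B - z + (\omega_B - z)^2 m_\boxplus + \OSD(N^{-\gamma/4})$, both $\OSD(1)$, whence~\eqref{091130} transfers these bounds to $\ntr(\widetilde B^{\la i\ra}G^{(i)})$ and $\ntr(\widetilde B^{\la i\ra}G^{(i)}\widetilde B^{\la i\ra})$.

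Finally, for~\eqref{090901} I exploit the $\mathbf g_i$-independence: since $\ntr(\widetilde B^{\la i\ra} G^{\la i\ra})$ is a function of $U^{\la i\ra}$ alone, it is annihilated by $\IE_{\mathbf g_i}$, so
\begin{align*}
\IE_{\mathbf g_i}\bigl[\ntr(\widetilde B^{\la i\ra} G^{(i)})\bigr] = \IE_{\mathbf g_i}\bigl[\ntr(\widetilde B^{\la i\ra}(G^{(i)} - G^{\la i\ra}))\bigr].
\end{align*}
Because $H^{(i)} - H^{\la i\ra} = W_i\widetilde B^{\la i\ra}W_i - \widetilde B^{\la i\ra}$ is Hermitian of rank $\le 2$ and bounded operator norm, estimate~\eqref{091002} supplies the deterministic bound $|\ntr(\widetilde B^{\la i\ra}(G^{(i)} - G^{\la i\ra}))| \le C/(N\eta)$; the triangle inequality $|\IE_{\mathbf g_i} X| \le 2\sup|X|$ then closes the first estimate, and the second is handled identically with $(\widetilde B^{\la i\ra})^2$ in place of $\widetilde B^{\la i\ra}$. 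There is no genuine technical obstacle; the lemma is a bookkeeping exercise, and the only care required is to select, in each step, the right intermediate Hamiltonian ($H^{\la i\ra}$ for the $\IE_{\mathbf g_i}$ estimates,  $H$ or $H^{(i)}$ for~\eqref{091130}) so that the rank bound and the independence structure align.
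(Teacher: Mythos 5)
Your proof is correct and follows essentially the same route as the paper's: split the trace difference into a resolvent perturbation (handled by the rank-one formula~\eqref{091002}) and a perturbation in $\widetilde B$ (handled by the Householder identity $\widetilde B=R_i\widetilde B^{\la i\ra}R_i$, $R_i^2=I$), and for~\eqref{090901} exploit the independence of $\mathbf g_i$ and $G^{\la i\ra}$ before invoking~\eqref{091002} once more. One genuinely useful difference: for~\eqref{0910113} you derive the boundedness of $\ntr(\widetilde B G\widetilde B)$ directly from the resolvent identities~\eqref{le computation of traces} and the expansion~\eqref{091901} (which rest only on assumption~\eqref{091071}), whereas the paper cites~\eqref{091221}; since~\eqref{091221} is itself obtained later in the proof of Proposition~\ref{lem.0910111} using~\eqref{0910113}, your route avoids a circularity-of-citation that the paper's short-hand reference courts. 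Your rank count for $H-H^{(i)}$ ($\le 4$, range inside $\mathrm{span}(\mathbf r_i,\mathbf w_i,\widetilde B^{\la i\ra}\mathbf r_i,\widetilde B^{\la i\ra}\mathbf w_i)$) is right, and it is worth noting — as you implicitly rely on — that the bound in~\eqref{091002} is insensitive to $\|R\|$, so the a.s.\ unboundedness of $\|\mathbf w_i\|_2$ is harmless.
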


\begin{proof}[Proof of Proposition~\ref{lem.0910111}]
Fix $i\in\llbracket 1, N\rrbracket$. By the concentration results of Lemma~\ref{lem090501} and Corollary~\ref{cor090501}, it suffices to estimate $\mathbb{E}_{\mathbf{g}_i}[G_{ii}^{(i)}(z)]$, $\mathbb{E}_{\mathbf{g}_i}\big[S_i(z)\big]$ and  $\mathbb{E}_{\mathbf{g}_i}[T_i(z)]$ to establish~\eqref{091151} and~\eqref{091152}. Recall the definition of $H^{(i)}$ and $G^{(i)}$ from~\eqref{091330}. We start with the identity
\begin{align}
(A-z)G^{(i)}(z)=-\widetilde{B}^{(i)}G^{(i)}(z)+I\,,\qquad\qquad z\in\C^+\,. \label{091089}
\end{align}
Since $A$ is diagonal, we have
\begin{align}
(a_i-z)G_{ii}^{(i)}(z)=-\big(\widetilde{B}^{(i)}G^{(i)}(z)\big)_{ii}+1\,,\qquad\qquad z\in\C^+\,. \label{091102}
\end{align}
Therefore, to estimate $\mathbb{E}_{\mathbf{g}_i}[G_{ii}^{(i)}(z)]$, it suffices to estimate $\mathbb{E}_{\mathbf{g}_i}[(\widetilde{B}^{(i)}G^{(i)}(z))_{ii}]$ instead. Recalling the definitions in~\eqref{091061} and~\eqref{091060}, we have
\begin{align}
\big(\widetilde{B}^{(i)}G^{(i)}\big)_{ii} &=\mathbf{e}_i^*\big( I-\mathbf{e}_i\mathbf{e}_i^* -\mathbf{e}_i\mathbf{g}_i^*-\mathbf{g}_i\mathbf{e}_i^*-\mathbf{g}_i\mathbf{g}_i^*\big) \widetilde{B}^{\langle i\rangle}\nonumber\\ &\qquad \times \big( I-\mathbf{e}_i\mathbf{e}_i^* -\mathbf{e}_i\mathbf{g}_i^*-\mathbf{g}_i\mathbf{e}_i^*-\mathbf{g}_i\mathbf{g}_i^*\big) G^{(i)}\mathbf{e}_i\nonumber\\
&= - \mathbf{e}_i^*\big(\mathbf{e}_i\mathbf{g}_i^*+\mathbf{g}_i\mathbf{e}_i^*+\mathbf{g}_i\mathbf{g}_i^*\big) \widetilde{B}^{\langle i\rangle} \big( I-\mathbf{e}_i\mathbf{e}_i^* -\mathbf{e}_i\mathbf{g}_i^*-\mathbf{g}_i\mathbf{e}_i^*-\mathbf{g}_i\mathbf{g}_i^*\big) G^{(i)}\mathbf{e}_i\,. \label{091073}
\end{align}
Since $\mathbf{e}_i$ is an eigenvector of $\wt{B}^{\la i\ra}$ (\cf~\eqref{0911401}), we have $(\widetilde{B}^{\langle i\rangle}G^{(i)}\big)_{ii}=b_iG^{(i)}_{ii}$.
Since moreover~$B$ is traceless by assumption~\eqref{091072}, we have $\ntr \wt{B}^{\la i\ra}=\ntr B=0$.  Thus
the {\it apriori} estimates in~\eqref{091071}, the bound in~\eqref{091090}, and the following concentration estimates (\cf Lemma~\ref{lem.091720})
\begin{align}
|\mathbf{e}_j^*\mathbf{g}_i|\prec \frac{1}{\sqrt{N}}\,,\qquad |\mathbf{e}^*_j\widetilde{B}^{\langle i\rangle}\mathbf{g}_i|\prec \frac{1}{\sqrt{N}}\,, \qquad \big|\mathbf{g}_i^*\widetilde{B}^{\la i\ra}\mathbf{g}_i\big|\prec \frac{1}{\sqrt{N}}\,, \label{091080}
\end{align}
for all $j\in\llbracket 1,N\rrbracket$, imply that $\mathbf{g}_i^* \widetilde{B}^{\langle i\rangle}G^{(i)}\mathbf{e}_i$ is the only relevant term in~\eqref{091073}. Thus recalling from definition~\eqref{le T and S} that $S_i=\mathbf{g}_i^* \widetilde{B}^{\langle i\rangle}G^{(i)}\mathbf{e}_i$ we arrive at
\begin{align}
 \big|(\widetilde{B}^{(i)}G^{(i)})_{ii}+S_i\big|\prec \frac{1}{\sqrt{N}}\,. \label{090401bis}
\end{align}

Using integration by parts for complex Gaussian random variables, we compute~$\mathbb{E}_{\mathbf{g}_i}[S_i]$ next. Regarding $g$ and $\overline{g}$ as independent variables for computing $\partial_g f(g,\overline{g})$, we have
\begin{align}
\int_{\C} \overline{g} f(g,\overline{g})\,\e{-\frac{|g|^2}{\sigma^2}} \dd g\wedge \dd \overline{g}=\sigma^2\int_{\C}\partial_g f(g,\overline{g})\,\e{-\frac{|g|^2}{\sigma^2}}\dd g\wedge \dd\overline g\,, \label{091350}
\end{align}
for differentiable functions $f\,:\, \C^2\to\C$. Using~\eqref{091350} with $\sigma^2=1/N$ for each component of $\mathbf{g}_i=(g_{i1},\ldots, g_{iN})$, we have
\begin{align}
\mathbb{E}_{\mathbf{g}_i}[S_i]= \sum_{k=1}^N \mathbb{E}_{\mathbf{g}_i} \big[\overline{g}_{ik} (\widetilde{B}^{\langle i\rangle}G^{(i)})_{ki}\big]&= \frac{1}{N}\sum_{k=1}^N \mathbb{E}_{\mathbf{g}_i} \bigg[\frac{\partial (\widetilde{B}^{\langle i\rangle}G^{(i)})_{ki}}{\partial g_{ik}}\bigg]\,. \label{090370}
\end{align}
Using the definitions in~\eqref{091061},~\eqref{091060} and regarding $g_{ik}$, $\overline{g}_{ik}$ as independent variables, we have
\begin{align}
\frac{\partial W_i}{\partial g_{ik}}=-\mathbf{e}_k\mathbf{e}_i^*-\mathbf{e}_k\mathbf{g}_i^*\,, \label{091755}
\end{align}
so that
\begin{align}
&\frac{\partial \big(\widetilde{B}^{\langle i\rangle}G^{(i)}\big)_{ki}}{\partial g_{ik}}=\mathbf{e}_k^*\widetilde{B}^{\langle i\rangle}G^{(i)}\big(\mathbf{e}_k\mathbf{e}_i^*+\mathbf{e}_k\mathbf{g}_i^*\big)\widetilde{B}^{\langle i\rangle}\big(I-\mathbf{e}_i\mathbf{e}_i^* -\mathbf{e}_i\mathbf{g}_i^*-\mathbf{g}_i\mathbf{e}_i^*-\mathbf{g}_i\mathbf{g}_i^*\big) G^{(i)}\mathbf{e}_i\nonumber\\
&\qquad+\mathbf{e}_k^*\widetilde{B}^{\langle i\rangle}G^{(i)}\big(I-\mathbf{e}_i\mathbf{e}_i^* -\mathbf{e}_i\mathbf{g}_i^*-\mathbf{g}_i\mathbf{e}_i^*-\mathbf{g}_i\mathbf{g}_i^*\big)\widetilde{B}^{\langle i\rangle}\big(\mathbf{e}_k\mathbf{e}_i^*+\mathbf{e}_k\mathbf{g}_i^*\big) G^{(i)}\mathbf{e}_i\,. \label{091758}
\end{align} 
Since $\mathbf{e}_i$ is an eigenvector of $\widetilde B^{\la i \ra}$ with eigenvalue $b_i$, we further get from~\eqref{091758} that 
\begin{align}
\frac{\partial \big(\widetilde{B}^{\langle i\rangle}G^{(i)}\big)_{ki}}{\partial g_{ik}}&= \big(\widetilde{B}^{\langle i\rangle}G^{(i)}\big)_{kk}\big( \mathbf{g}_i^*\widetilde{B}^{\langle i\rangle}G^{(i)}\mathbf{e}_i-b_i\, \mathbf{g}_i^*G^{(i)}\mathbf{e}_i\big)\nonumber\\
&\qquad+\big(\widetilde{B}^{\langle i\rangle}G^{(i)}\widetilde{B}^{\langle i\rangle}\big)_{kk} \big(G_{ii}^{(i)}+\mathbf{g}_i^*G^{(i)}\mathbf{e}_i\big)\nonumber\\
&\qquad-\big(\widetilde{B}^{\langle i\rangle}G^{(i)}\big)_{kk}\,\big(G^{(i)}_{ii}+\mathbf{g}_i^*G^{(i)}\mathbf{e}_i\big)\, \big(\mathbf{e}_i^*\wt{B}^{\la i\ra}\mathbf{g}_i+\mathbf{g}_i^*\wt{B}^{\la i\ra}\mathbf{e}_i+\mathbf{g}_i^*\wt{B}^{\la i\ra}\mathbf{g}_i\big)\nonumber\\
&\qquad-\big(G^{(i)}_{ii}+\mathbf{g}_i^*G^{(i)}\mathbf{e}_i\big)\big(\mathbf{e}_i^*\wt{B}^{\la i\ra}\mathbf{e}_k\big)\big(\mathbf{e}_k^*\widetilde{B}^{\langle i\rangle}G^{(i)}\mathbf{e}_i\big)\nonumber\\
&\qquad-\big(G^{(i)}_{ii}+\mathbf{g}_i^*G^{(i)}\mathbf{e}_i\big)\big(\mathbf{g}_i^*\wt{B}^{\la i\ra}\mathbf{e}_k\big)\big(\mathbf{e}_k^*\widetilde{B}^{\langle i\rangle}G^{(i)}\mathbf{e}_i\big)\nonumber\\
&\qquad-\big(G^{(i)}_{ii}+\mathbf{g}_i^*G^{(i)}\mathbf{e}_i\big)\big(\mathbf{e}_i^*\wt{B}^{\la i\ra}\mathbf{e}_k\big)\big(\mathbf{e}_k^*\widetilde{B}^{\langle i\rangle}G^{(i)}\mathbf{g}_i\big) \nonumber\\ 
&\qquad-\big(G^{(i)}_{ii}+\mathbf{g}_i^*G^{(i)}\mathbf{e}_i\big)\big(\mathbf{g}_i^*\wt{B}^{\la i\ra}\mathbf{e}_k\big)\big(\mathbf{e}_k^*\widetilde{B}^{\langle i\rangle}G^{(i)}\mathbf{g}_i\big)\,.\label{0911450}
\end{align}
Plugging~\eqref{0911450} into~\eqref{090370} and rearranging, we get
\begin{align}
\mathbb{E}_{\mathbf{g}_i}[S_i]&= \mathbb{E}_{\mathbf{g}_i}\Big[\ntr  \big(\widetilde{B}^{\langle i\rangle}G^{(i)}\big)\big( \mathbf{g}_i^*\widetilde{B}^{\langle i\rangle}G^{(i)}\mathbf{e}_i-b_i \; \mathbf{g}_i^*G^{(i)}\mathbf{e}_i\big)\Big]\nonumber\\
&\quad+\mathbb{E}_{\mathbf{g}_i}\Big[\ntr  \big(\widetilde{B}^{\langle i\rangle}G^{(i)}\widetilde{B}^{\langle i\rangle} \big)\big(G^{(i)}_{ii}+\mathbf{g}_i^*G^{(i)}\mathbf{e}_i\big)\Big]\nonumber\\
&\quad-\mathbb{E}_{\mathbf{g}_i}\Big[\ntr \big(\widetilde{B}^{\langle i\rangle}G^{(i)}\big)\;\big(\mathbf{e}_i^*\wt{B}^{\la i\ra}\mathbf{g}_i+\mathbf{g}_i^*\wt{B}^{\la i\ra}\mathbf{e}_i+\mathbf{g}_i^*\wt{B}^{\la i\ra}\mathbf{g}_i\big)\big(G^{(i)}_{ii}+\mathbf{g}_i^*G^{(i)}\mathbf{e}_i\big)\Big]\nonumber\\
&\quad-\frac{1}{N} \mathbb{E}_{\mathbf{g}_i}\Big[\Big(b_i^2G^{(i)}_{ii}+\mathbf{g}_i^*\big(\wt{B}^{\la i\ra}\big)^2G^{(i)}\mathbf{e}_i+\mathbf{e}_i^*(\wt{B}^{\la i\ra})^2G^{(i)}\mathbf{g}_i+\mathbf{g}_i^*\big(\wt{B}^{\la i\ra}\big)^2G^{(i)}\mathbf{g}_i\Big)\nonumber\\
&\qquad \qquad\qquad\times\big(G^{(i)}_{ii}+\mathbf{g}_i^*G^{(i)}\mathbf{e}_i\big)\Big]\,. \label{091101}
\end{align}

We next claim that the last two terms on the right of~\eqref{091101} are small. Using the boundedness of $G_{ii}^{(i)}$ (following from the {\it apriori} estimate~\eqref{091071}), the bound \eqref{091090}, the concentration estimates in~\eqref{091080}, and estimate~\eqref{0910113} of the auxiliary Lemma~\ref{lem.0910115}, and the trivial bounds
\begin{align}
\big|\mathbf{x}_i^*(\wt{B}^{\la i\ra})^2G^{(i)}\mathbf{y}_i\big|\prec\frac{1}{\eta}\,,\qquad\quad \mathbf{x}_i,\,\mathbf{y}_i=\mathbf{e}_i\;\, \text{or}\;\, \mathbf{g}_i\,, \label{100701}
\end{align}
we see that the last two terms on the right side of~\eqref{091101} are indeed negligible, \ie 
\begin{align}
\mathbb{E}_{\mathbf{g}_i}[S_i]&= \mathbb{E}_{\mathbf{g}_i}\Big[\ntr  \big(\widetilde{B}^{\langle i\rangle}G^{(i)}\big)\big(S_i- b_i T_i\big)\Big]\nonumber\\
&\qquad\quad+\mathbb{E}_{\mathbf{g}_i}\Big[\ntr  \big(\widetilde{B}^{\langle i\rangle}G^{(i)}\widetilde{B}^{\langle i\rangle} \big)\big(G^{(i)}_{ii}+T_i\big)\Big]+\OSD\Big(\frac{1}{\sqrt{N}} \Big)+\OSD\Big(\frac{1}{N\eta} \Big)\,,  \label{091105}
\end{align} 
where we also used the definitions of $T_i$ and $S_i$ in~\eqref{le T and S}. From assumption \eqref{091071} and  Corollary~\ref{cor090501}, we have the bounds 
\begin{align}
\max_{j\in \llbracket 1,N\rrbracket}|G_{jj}^{(j)}|\prec 1\,, \qquad \max_{j\in \llbracket 1,N\rrbracket}|T_j|\prec 1\,,\qquad \max_{j\in \llbracket 1,N\rrbracket} |S_j|\prec 1\,. \label{0921200}
\end{align} 
We hence obtain from~\eqref{091105}, ~\eqref{0910113}, and the concentration estimates in~\eqref{090901},~\eqref{0907100} that
\begin{align}
\mathbb{E}_{\mathbf{g}_i}\big[S_i\big]&=\ntr \big( \widetilde{B}^{\langle i\rangle}G^{(i)}\big)\,\big(\mathbb{E}_{\mathbf{g}_i}\big[S_i\big]- b_i \, \mathbb{E}_{\mathbf{g}_i}\big[T_i\big]\big)\nonumber\\
&\qquad+\ntr  \big(\widetilde{B}^{\langle i\rangle}G^{(i)}\widetilde{B}^{\langle i\rangle}\big)\, \big(G^{(i)}_{ii}+\mathbb{E}_{\mathbf{g}_i}\big[T_i\big]\big)+\OSD\Big(\frac{1}{\sqrt{N\eta}} \Big)\,.   \label{091110}
\end{align} 
Repeating the above computations for $\mathbb{E}_{\mathbf{g}_i}[\mathbf{g}_i^* G^{(i)}\mathbf{e}_i]=\mathbb{E}_{\mathbf{g}_i}[T_i]$, we similarly obtain
\begin{align}
\mathbb{E}_{\mathbf{g}_i}\big[T_i\big]&=\ntr G^{(i)} \,\big(\mathbb{E}_{\mathbf{g}_i}\big[S_i\big]- b_i \; \mathbb{E}_{\mathbf{g}_i}\big[T_i\big]\big)\nonumber\\
&\qquad+\ntr  \big(\widetilde{B}^{\langle i\rangle}G^{(i)}\big)\, \big(G^{(i)}_{ii}+\mathbb{E}_{\mathbf{g}_i}\big[T_i\big]\big)+\OSD\Big( \frac{1}{\sqrt{N\eta}} \Big)\,.   \label{091111}
\end{align} 
Now, using the bounds in~\eqref{0921200}, the estimates~\eqref{091130} and $|\ntr G^{(i)}-\ntr G|\prec\frac{1}{N\eta}$ (following from~\eqref{091002}), we obtain from~\eqref{091110} and~\eqref{091111} the equations
\begin{align}
\mathbb{E}_{\mathbf{g}_i}\big[S_i\big]-\ntr \big(\wt{B}G\wt{B}\big)\big(G_{ii}^{(i)}+\mathbb{E}_{\mathbf{g}_i}\big[T_i\big]\big)&=\ntr\big( \wt{B}G\big)\,\big(\mathbb{E}_{\mathbf{g}_i}
\big[S_i\big]-b_i \mathbb{E}_{\mathbf{g}_i}\big[T_i\big]\big)+\OSD\Big(\frac{1}{\sqrt{N\eta}} \Big)\,, \label{091120}
\end{align}
and
\begin{align}
\mathbb{E}_{\mathbf{g}_i}\big[T_i\big]-\ntr \big(\wt{B}G\big)\,\big(G_{ii}^{(i)}+\mathbb{E}_{\mathbf{g}_i}\big[T_i\big]\big)&=\ntr \big(G\big) \,\big(\mathbb{E}_{\mathbf{g}_i}\big[S_i\big]-b_i \mathbb{E}_{\mathbf{g}_i}\big[T_i\big]\big) 
+\OSD\Big(\frac{1}{\sqrt{N\eta}} \Big)\,. \label{091121}
\end{align}

We first approximately solve~\eqref{091121} for $\mathbb{E}_{\mathbf{g}_i}[T_i]$ to show, under the assumptions of Proposition~\ref{lem.0910111}, that $|\E_{\bs{g}_i} T_i|\prec N^{-\frac{\gamma}{4}}$. To see this, we recall~\eqref{091102} and~\eqref{090401bis} which together with assumption~\eqref{091071} imply that 
\begin{align}
S_i=(a_i-z)G_{ii}^{(i)}-1+\OSD\Big(\frac{1}{\sqrt{N}}\Big)=-\frac{z-\omega_B}{a_i-\omega_B}+\OSD\big(N^{-\frac{\gamma}{4}}\big)\,. \label{0920137}
\end{align}
By the concentration estimate~\eqref{0907101}, we also have 
\begin{align}
\mathbb{E}_{\mathbf{g}_i}\big[S_i\big]=-\frac{z-\omega_B}{a_i-\omega_B}+\OSD\big( N^{-\frac{\gamma}{4}}\big)\,. \label{0914300}
\end{align}
In addition, by the identity $\widetilde{B}G=I-(A-z)G$, assumption~\eqref{091071} and equality~\eqref{le m shorthand notation Kevin}, we have, using the shorthand notation $m_\boxplus\equiv m_{\mu_A\boxplus\mu_B}$,
\begin{align}
\ntr G= m_{\boxplus}+\OSD\big(N^{-\frac{\gamma}{4}}\big)\,,\qquad \ntr \big(\wt{B}G\big)=(z-\omega_B)m_{\boxplus}+\OSD\big(N^{-\frac{\gamma}{4}}\big)\,. \label{0920130}
\end{align}
Substituting~\eqref{0914300} and assumption~\eqref{091071} into~\eqref{091121}, and using $|T_i|, |S_i|\prec 1$, we obtain 
\begin{align}
\big|\big(1-\ntr \big(\wt{B}G\big)+b_i\ntr G\big) \mathbb{E}_{\mathbf{g}_i}\big[T_i\big]\big|\prec N^{-\frac{\gamma}{4}}\,. \label{0920135}
\end{align}
Using~\eqref{0920130} and the second equation of~\eqref{060101}, we have
\begin{align}
\big|\big(1-\ntr \big(\wt{B}G\big)+b_i\ntr G\big)\big|&=\big| 1+(\omega_B-z+b_i)m_{\boxplus}\big|+\OSD\big(N^{-\frac{\gamma}{4}}\big)\nonumber\\
&=|(-\omega_A+b_i)m_{\boxplus}|+\OSD\big(N^{-\frac{\gamma}{4}}\big)\,.\label{091231}
\end{align}
Since $|(-\omega_A+b_i)m_{\boxplus}|\gtrsim 1$ by~\eqref{le lower bound on omega AB}, we have from~\eqref{0920135} that $\mathbb{E}_{\mathbf{g}_i}[T_i]\prec N^{-\frac{\gamma}{4}}$. Hence from~\eqref{0907101}, $ |T_i|\prec N^{-\frac{\gamma}{4}}$. Then solving~\eqref{091120} and~\eqref{091121} for $\mathbb{E}_{\mathbf{g}_i}\big[S_i\big]$, we obtain
\begin{align}
  \mathbb{E}_{\mathbf{g}_i} \big[S_i\big]&=-\frac{\ntr  \big(\widetilde{B}G\big)}{\ntr G} G_{ii}^{(i)}+\Big[\frac{\ntr\big( \widetilde{B}G\big)-\big(\ntr\big(\widetilde{B}G\big)\big)^2}{\ntr G}+\ntr \big( \widetilde{B}G\widetilde{B}\big)\Big]\big(G_{ii}^{(i)}+\mathbb{E}_{\mathbf{g}_i}\big[T_i\big]\big)\nonumber\\ &\qquad\qquad+\OSD\Big(\frac{1}{\sqrt{N\eta}}\Big)\,.\label{091145}
\end{align}
Averaging over the index $i$ and reorganizing, we get 
\begin{align}
\Big|\frac{\ntr  \big(\widetilde{B}G\big)-\big(\ntr \big(\widetilde{B}G\big)\big)^2}{\ntr  G}+\ntr  \big(\widetilde{B}G\widetilde{B}\big)\Big|=\bigg|\frac{\frac{1}{N}\sum_{i=1}^N\big(\frac{\ntr  (\widetilde{B}G)}{\ntr  G} G_{ii}^{(i)}+\mathbb{E}_{\mathbf{g}_i}\big[S_i\big]\big)+\OSD(\frac{1}{\sqrt{N\eta}})}{\frac{1}{N}\sum_{i=1}^N (G_{ii}^{(i)}+\mathbb{E}_{\mathbf{g}_i}\big[T_i\big])}\bigg|\,. \label{091132}
\end{align}
Now, recalling the concentration of $S_i$ in~\eqref{0907101} and estimate~\eqref{090401bis}, we have
\begin{align}
\big|\mathbb{E}_{\mathbf{g}_i} \big[S_i\big]+ (\widetilde{B}^{(i)}G^{(i)})_{ii}\big|\prec\frac{1}{\sqrt{N\eta}}\,.  \label{091225}
\end{align} 
Note that under assumption~\eqref{091071}, we can use Corollary~\ref{cor090501}
 to get~\eqref{091090}, which together with~\eqref{091071} implies that the assumptions in Lemma~\ref{lem.091060} in the case of $i=j=k$ are satisfied. Then, by~\eqref{091057} with $i=j=k$ and~\eqref{091102}, we get
 \begin{align}
 G_{ii}^{(i)}=G_{ii}+\OSD\Big(\frac{1}{\sqrt{N\eta}}\Big)\,,\qquad (\widetilde{B}^{(i)}G^{(i)})_{ii}=(\widetilde{B}G)_{ii}+\OSD\Big(\frac{1}{\sqrt{N\eta}}\Big)\,, \label{0920150}
 \end{align}
 for all $i\in \llbracket1,N\rrbracket$. 
Using~\eqref{091225} and~\eqref{0920150} we obtain
\begin{align}
\Big|\frac{1}{N}\sum_{i=1}^N G_{ii}^{(i)}-\ntr G\Big|\prec \frac{1}{\sqrt{N\eta}}\,,\qquad \Big|\frac{1}{N}\sum_{i=1}^N \mathbb{E}_{\mathbf{g}_i}\big[S_i\big]+\ntr \big(\wt{B} G\big)\Big|\prec\frac{1}{\sqrt{N\eta}}\,. \label{091220}
\end{align}
Substituting~\eqref{091220} and assumption~\eqref{091071} into the right side of~\eqref{091132}, and using $|\ntr G|\gtrsim 1$ (following from~\eqref{0920130}) and $|T_i|\prec N^{-\frac{\gamma}{4}}$, we obtain 
\begin{align}
\Big|\frac{\ntr  \big(\widetilde{B}G\big)-\big(\ntr \big(\widetilde{B}G\big)\big)^2}{\ntr  G}+\ntr \big( \widetilde{B}G\widetilde{B}\big)\Big|\prec\frac{1}{\sqrt{N\eta}}\,. \label{091221}
\end{align}
 Now, plugging~\eqref{091221} back into~\eqref{091145} gives
\begin{align}
\mathbb{E}_{\mathbf{g}_i}\big[S_i\big]=-\frac{\ntr  \big(\widetilde{B}G\big)}{\ntr  G} G_{ii}^{(i)}+\OSD\Big(\frac{1}{\sqrt{N\eta}}\Big)\,, \label{091155}
\end{align}
which together with~\eqref{091102} and~\eqref{091225} implies that
\begin{align}
\big(a_i-\omega_B^c\big)G_{ii}^{(i)}=1+\OSD\Big(\frac{1}{\sqrt{N\eta}}\Big)\,, \label{0920160}
\end{align}
in light of the definition of $\omega_B^c(z)$ in~\eqref{091260}. By assumption~\eqref{091071} we see that $\omega_B^c(z)=\omega_B(z)+\OSD(N^{-\frac{\gamma}{4}})$. Hence by~\eqref{le lower bound on omega AB}, we also have $\Im \omega_B^c(z)\geq c$ for some positive constant~$c$. Therefore, we get~\eqref{091151} from~\eqref{0920160}.

Then~\eqref{091155} and~\eqref{091151}, together with the definition of $\omega_B^c(z)$ in~\eqref{091260} and the concentration of $S_i$ in~\eqref{cor090501}, imply the estimate of $S_i$ in~\eqref{091152}.

Substituting~\eqref{091155} into~\eqref{091121}, we strengthen~\eqref{0920135} to
\begin{align}
\big|\big(1-\ntr \big(\wt{B}G\big)+b_i\ntr G\big) \mathbb{E}_{\mathbf{g}_i}\big[T_i\big]\big|\prec\frac{1}{\sqrt{N\eta}}\,. \label{091232}
\end{align}
Using~\eqref{091231} again, we obtain from~\eqref{091232} that
\begin{align*}
\big|\mathbb{E}_{\mathbf{g}_i}\big[T_i\big]\big|\prec \frac{1}{\sqrt{N\eta}}\,,
\end{align*}
which together with the concentration inequality in~\eqref{0907101} implies~\eqref{091152}. Therefore, we complete the proof of Lemma~\ref{lem.0910111}. 
\end{proof} 

We conclude this section with the proof of Lemma~\ref{lem.0910115}.

\begin{proof}[Proof of Lemma \ref{lem.0910115}]  We start by invoking the finite-rank perturbation formula~\eqref{091002} to get 
\begin{align*}
\big|\ntr Q_1^{\la i\ra}G^{(i)}Q_2^{\la i\ra}-\ntr Q_1^{\la i\ra}GQ_2^{\la i\ra}\big|\leq \frac{2\| Q_1^{\la i\ra}Q_2^{\la i\ra}\|}{N\eta}\,,\qquad Q_1^{\la i\ra},Q_2^{\la i\ra}=I\,\,\text{or}\,\,\wt{B}^{\la i\ra}\,.
\end{align*}
Hence, it suffices to verify~\eqref{091130} and~\eqref{0910113} with $G^{(i)}$ replaced by~$G$. 
 Recalling from Section~\ref{s.5} that $R_i=I-\mathbf{r}_i\mathbf{r}_i^*$ and using the fact that $R_i$ is a Householder reflection (in fact $\|\mathbf{r}_i\|_2^2=2$ by construction), we have $\wt{B}^{\la i\ra}=R_i\wt{B} R_i$. Then we write
\begin{align}
 \ntr \big(\widetilde{B}^{\langle i\rangle}G\big)=\ntr \big(R_i\wt{B} R_iG\big)=\ntr \big(\widetilde{B}G\big)+d_i\,, \label{0913120}
 \end{align} 
 with 
\begin{align*}
d_i\deq -\frac{1}{N}{\mathbf{r}}_i^*\wt{B}G{\mathbf{r}}_i-\frac{1}{N}\mathbf{r}_i^* G\wt{B}\mathbf{r}_i+\frac{1}{N}(\mathbf{r}_i^*\wt{B}\mathbf{r}_i)(\mathbf{r}_i G\mathbf{r}_i)\,.
\end{align*}
Using that $\|G\|\le 1/\eta$, we immediately get the deterministic bound $|d_i|\le C/N\eta$, for some numerical constant $C$. Together with~\eqref{0913120} this implies the first estimate in~\eqref{091130}. The second estimate in~\eqref{091130} is obtained in the similar way.

The bounds in~\eqref{0910113} follow by combining the sharp formulas for $\ntr(\wt{B}G)$
and $\ntr(\wt{B}G\wt{B})$ from 
 \eqref{0920130}, \eqref{091221} 
with the estimates in~\eqref{091130}. 

To prove~\eqref{090901}, we set $Q^{\la i \ra}=\wt{B}^{\la i\ra}$ or $(\wt{B}^{\la i\ra})^2$ and note that
\begin{align*}
\big|\IE_{\mathbf{g}_i}[\ntr \big(Q^{\la i \ra} G^{(i)}\big)]\big|=\big|\IE_{\mathbf{g}_i}[\ntr \big(Q^{\la i \ra} (G^{(i)}-G^{\la i\ra})\big)]\big|\leq \frac{2\|Q^{\la i \ra}\|}{N\eta}\,,
\end{align*}
where we used that $\mathbf{g}_i$ and $G^{\la i\ra}$ are independent, and once more~\eqref{091002}.
\end{proof}

\section{Proof of Theorem~\ref{thm.091201}: Inequalities~\eqref{0912100} and~\eqref{0913200}} \label{s.7}
In this section, we prove the estimates~\eqref{0912100} and~\eqref{0913200} of Theorem~\ref{thm.091201} via a continuity argument. We also prove Theorem~\ref{thm.091501}.

First, let us recall the matrix $\mathcal{H}$ and its Green function $\mathcal{G}$ defined in~\eqref{091960} and~\eqref{091961}, these are the natural counterparts of $H$ and $G$ with the roles of $A$ and $B$ interchanged. We can apply a similar partial randomness decomposition to the unitary $U^*$ in $\mathcal{H}$ as we did for $U$ in $H$ in Section~\ref{s.5}.
This means that, for any $i\in \llbracket 1,N \rrbracket$, there exists an independent pair $(\widehat{\mathbf{v}}_i,\mathcal{U}^{i})$, uniformly distributed  on $\mathcal{S}_{\C}^{N-1}$ and $ U(N-1) $, respectively, such that with $\widehat{\mathbf{r}}_i\deq\sqrt{2}(\mathbf{e}_i+\e{-\mathrm{i}\widehat{\theta}_i}\widehat{\mathbf{v}}_i)/\|\mathbf{e}_i+\e{-\mathrm{i}\widehat{\theta}_i}\widehat{\mathbf{v}}_i\|_2$, we have the decomposition $U^*=-\e{\mathrm{i}\widehat{\theta}_i}\mathcal{R}_i\,\mathcal{U}^{\langle i\rangle}$, where~$\widehat{\theta}_i$ is the argument of the $i$th coordinate of $\widehat{\mathbf{v}}_i$; where $\mathcal{R}_i\deq(I-\widehat{\mathbf{r}}_i\widehat{\mathbf{r}}_i^*)$ and $\mathcal{U}^{\la i\ra}$ is the unitary matrix with $\mathbf{e}_i$ as its $i$th column and $\mathcal{U}^i$ as its $(i,i)$-matrix minor. Analogously to $\mathbf{g}_i$ defined in~\eqref{def of g}, we define a Gaussian vector $\widehat{\mathbf{g}}_i=(\widehat{g}_{i1},\ldots,\widehat{g}_{iN})\sim \mathcal{N}_{\mathbb{C}}(0,N^{-1}I)$, to approximate $\e{-\mathrm{i}\widehat{\theta}_i}\widehat{\mathbf{v}}_i$. Setting $\widehat{\mathbf{w}}_i\deq\mathbf{e}_i+\widehat{\mathbf{g}}_i$ and $\mathcal{W}_i\deq I-\widehat{\mathbf{w}}_i\widehat{\mathbf{w}}_i^*$, we define
\begin{align*}
\mathcal{H}^{(i)}\deq B+\mathcal{W}_i\,\mathcal{U}^{\la i\ra} A\, (\mathcal{U}^{\la i\ra})^*\mathcal{W}_i\,,
\end{align*}
for all $i\in\llbracket 1,N\rrbracket$. Calligraphic letters are used to distinguish the decompositions of $\mathcal{H}$ from the decompositions of $H$.

Next, we introduce the $z$-dependent random variable
\begin{align}
\Lambda_{\mathrm{d}}(z)\deq &\max_{i\in\llbracket 1,N\rrbracket}| G_{ii}^{(i)}(z)-\big(a_i-\omega_B(z)\big)^{-1}|+\max_{i\in\llbracket 1,N\rrbracket}| G_{ii}(z)-\big(a_i-\omega_B(z)\big)^{-1}| \nonumber\\
&+\max_{i\in\llbracket 1,N\rrbracket}|\mathcal{G}^{(i)}_{ii}(z)-\big(b_i-\omega_A(z)\big)^{-1}|+\max_{i\in\llbracket 1,N\rrbracket}|\mathcal{G}_{ii}(z)-\big(b_i-\omega_A(z)\big)^{-1}|\,. \label{def of Lambda}
\end{align}
Moreover, for any $\delta\in [0,1]$  and $z\in \mathcal{S}_{\mathcal{I}}(\eta_{\mathrm{m}},1)$, we  define the following event  
\begin{align}
\Theta_{\mathrm{d}}(z, \delta)\deq \{ \Lambda_{\mathrm{d}}(z)\leq \delta\}\, . \label{def of Xi} 
\end{align}
 The subscript $\mathrm{d}$ refers to ``diagonal" matrix elements. 
With the above notation, we have the following lemma.
\begin{lemma} \label{lem.091282}Suppose that the assumptions of Theorem~\ref{thm.091201} are satisfied and 
fix $\gamma>0$.  For any $\varepsilon$ with  $0<\varepsilon \le \frac{\gamma}{8}$ and for any
 $D>0$ there  exists a positive integer  $N_2(D,\varepsilon)$ 
  such that the following holds:
 For any  fixed $z=E+\mathrm{i}\eta\in \mathcal{S}_{\mathcal{I}}(\eta_{\mathrm{m}},1)$ 
  there exists an event $\Omega_{\mathrm{d}}(z)\equiv \Omega_{\mathrm{d}}(z,D,\varepsilon)$ with 
\begin{align*}
\mathbb{P}\big(\Omega_{\mathrm{d}}(z)\big)\geq 1-N^{-D}, \qquad \qquad\forall N\ge N_2(D,\varepsilon)\,,
\end{align*}
such that if the estimate
\begin{align}
\mathbb{P}\big(\Theta_{\mathrm{d}}(z, N^{-\frac{\gamma}{4}})\big)\geq 1-N^{-D}\big(1+N^5(1-\eta)\big)  \label{a priori bound on lambda_d}
\end{align}
holds for all $D>0$ and $N\geq N_1(D,\gamma,\varepsilon)$,  for some threshold $N_1(D,\gamma,\varepsilon)$, 
 then we also have
\begin{align}
\Theta_{\mathrm{d}}(z,N^{-\frac{\gamma}{4}})\cap \Omega_{\mathrm{d}}(z)\subset \Theta_{\mathrm{d}}\Big(z, \frac{N^\varepsilon}{\sqrt{N\eta}}\Big) \label{improved bound for lambda_d}
\end{align}
for all $N\geq N_3(D,\gamma,\varepsilon)\deq \max\{ N_1(D,\gamma,\varepsilon), N_2(D,\varepsilon)\}$.
\end{lemma}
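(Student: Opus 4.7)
The plan is to bootstrap the a priori bound $\Lambda_{\mathrm{d}}(z)\le N^{-\gamma/4}$ to the improved bound $\Lambda_{\mathrm{d}}(z)\le N^{\varepsilon}/\sqrt{N\eta}$ by combining Proposition~\ref{lem.0910111} (applied both to $H$ and to $\mathcal{H}$) with the local stability of the subordination system given in Lemma~\ref{le lemma perturbation of system}. Throughout, we work on $\Theta_{\mathrm{d}}(z,N^{-\gamma/4})$, on which the hypotheses of Proposition~\ref{lem.0910111} hold. This mirrors the heuristic outline given in Subsection~2.4.

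First, Proposition~\ref{lem.0910111} yields
\begin{align*}
\max_{i}\bigl|G_{ii}^{(i)}(z)-(a_i-\omega_B^c(z))^{-1}\bigr|\prec\frac{1}{\sqrt{N\eta}},
\end{align*}
together with the bounds in~\eqref{091152}. The analogous reasoning applied to $\mathcal{H}$, using the partial randomness decomposition of $U^*$ introduced at the start of Section~\ref{s.7}, gives
\begin{align*}
\max_{i}\bigl|\mathcal{G}_{ii}^{(i)}(z)-(b_i-\omega_A^c(z))^{-1}\bigr|\prec\frac{1}{\sqrt{N\eta}}.
\end{align*}
Averaging over $i$ and recalling $m_H=\ntr G=\ntr\mathcal{G}$, we obtain
\begin{align*}
m_H(z)-m_{\mu_A}(\omega_B^c(z))=\OSD\Big(\frac{1}{\sqrt{N\eta}}\Big),\qquad m_H(z)-m_{\mu_B}(\omega_A^c(z))=\OSD\Big(\frac{1}{\sqrt{N\eta}}\Big).
\end{align*}
Combined with the identity $1/m_H(z)=z-\omega_A^c(z)-\omega_B^c(z)$ from~\eqref{091850}, these yield the perturbed subordination system
\begin{align*}
\Phi_{\mu_A,\mu_B}\bigl(\omega_A^c(z),\omega_B^c(z),z\bigr)=\OSD\Big(\frac{1}{\sqrt{N\eta}}\Big).
\end{align*}

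Second, we verify the prerequisites of the perturbation Lemma~\ref{le lemma perturbation of system}. The a priori bound $\Lambda_{\mathrm{d}}(z)\le N^{-\gamma/4}$ together with the above estimate for $G_{ii}^{(i)}$ implies $|(a_i-\omega_B^c)^{-1}-(a_i-\omega_B)^{-1}|\le N^{-\gamma/4}+\OSD(1/\sqrt{N\eta})$; since $\im\omega_B$ is bounded away from zero by Lemma~\ref{cor.080601} and $\eta\ge N^{-1+\gamma}$, this transfers to the preliminary closeness $|\omega_B^c(z)-\omega_B(z)|\prec N^{-\gamma/4}$, and likewise for $\omega_A^c$. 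The linear $S$-stability, together with the upper and lower bounds on $\im\omega_A,\im\omega_B$ from Lemma~\ref{cor.080601}, provides order-one constants $k,K,S$ uniform on $\mathcal{S}_{\mathcal{I}}(0,1)$, so the smallness condition $k^2>\delta KS$ with $\delta=N^{-\gamma/4}$ is satisfied for large $N$. Lemma~\ref{le lemma perturbation of system} then upgrades the preliminary closeness to
\begin{align*}
|\omega_A^c(z)-\omega_A(z)|+|\omega_B^c(z)-\omega_B(z)|\prec\frac{1}{\sqrt{N\eta}}.
\end{align*}
Plugging this back into the conclusion of Proposition~\ref{lem.0910111} yields $|G_{ii}^{(i)}(z)-(a_i-\omega_B(z))^{-1}|\prec 1/\sqrt{N\eta}$, and the identity~\eqref{0920150} transfers this bound from $G_{ii}^{(i)}$ to $G_{ii}$. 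The symmetric argument applied to $\mathcal{G}$ closes the estimate for all four terms in the definition of $\Lambda_{\mathrm{d}}$ in~\eqref{def of Lambda}.

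Finally, to produce the high-probability event $\Omega_{\mathrm{d}}(z)$, we unpack every $\prec$ bound used above. For the prescribed $\varepsilon$ and $D$, each stochastic-domination statement fails on an event of probability at most $N^{-D}$; defining $\Omega_{\mathrm{d}}(z)$ as the intersection of the complements of the finitely many such failure events yields $\mathbb{P}(\Omega_{\mathrm{d}}(z))\ge 1-N^{-D}$ after a routine adjustment of $D$. On $\Theta_{\mathrm{d}}(z,N^{-\gamma/4})\cap\Omega_{\mathrm{d}}(z)$ the deterministic bound $\Lambda_{\mathrm{d}}(z)\le N^{\varepsilon}/\sqrt{N\eta}$ then holds, which is~\eqref{improved bound for lambda_d}. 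The main technical obstacle is the bookkeeping needed to reinterpret the $\prec$-hypothesis formulation of Proposition~\ref{lem.0910111} as an implication between the pointwise events $\Theta_{\mathrm{d}}(z,N^{-\gamma/4})$ and $\Theta_{\mathrm{d}}(z,N^{\varepsilon}/\sqrt{N\eta})$ while keeping the probability loss under control; this requires careful tracking of every probability budget through the cascade of estimates in Sections~\ref{s.6} and~\ref{section partial concentration}, but introduces no new analytic idea.
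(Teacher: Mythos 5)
Your proposal is correct and follows essentially the same route as the paper: feed the a priori bound into Proposition~\ref{lem.0910111} (applied to both $H$ and $\mathcal{H}$), assemble the perturbed subordination system, invoke Lemma~\ref{le lemma perturbation of system} together with the uniform bounds from Lemma~\ref{cor.080601} to sharpen $|\omega_A^c-\omega_A|,\,|\omega_B^c-\omega_B|$, and finally unpack the $\prec$-statements into an explicit high-probability event $\Omega_{\mathrm{d}}(z)$. The only thing worth noting is that the paper is slightly more careful in recording that the $O(N)$ many Gaussian concentration events underlying $\Omega_{\mathrm{d}}(z)$ admit a threshold $N_2(D,\varepsilon)$ independent of $z$, which is needed for the subsequent continuity argument, but your sketch is consistent with this.
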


\begin{proof} In this proof we fix $z\in \mathcal{S}_{\mathcal{I}}(\eta_{\mathrm{m}},1)$. By the definition of $\prec$ in Definition \ref{definition of stochastic domination}, we see that  assumption~\eqref{a priori bound on lambda_d} implies
\begin{align}
\big|G_{ii}^{(i)}(z)-(a_i-\omega_B^c(z))^{-1}\big|\prec N^{-\frac{\gamma}{4}}\,,\qquad \big|G_{ii}(z)-(a_i-\omega_B^c(z))^{-1}\big|\prec N^{-\frac{\gamma}{4}}\,,\label{091250}
\end{align}
and
\begin{align}
\big|\mathcal{G}_{ii}^{(i)}(z)-(b_i-\omega_A^c(z))^{-1}\big|\prec N^{-\frac{\gamma}{4}}\,,\qquad \big|\mathcal{G}_{ii}(z)-(b_i-\omega_A^c(z))^{-1}\big|\prec N^{-\frac{\gamma}{4}}\,. \label{091251}
\end{align}
Hence, we can use Corollary~\ref{cor090501} to get~\eqref{091090}. Together with the boundedness of~$G_{ii}^{(i)}$ and~$G_{ii}$ (\cf~\eqref{091250} and~\eqref{le lower bound on omega AB}) this implies that the assumptions in~\eqref{0911111} of Lemma~\ref{lem.091060} are satisfied when $i=j=k$. 
Thus~\eqref{091057} holds when $i=j=k$.
 Hence, invoking, (\ref{091250}) and Proposition~\ref{lem.0910111}, we get
\begin{align}
\big|G_{ii}^{(i)}(z)-(a_i-\omega_B^c(z))^{-1}\big|\prec \frac{1}{\sqrt{N\eta}}\,,\qquad \big|G_{ii}(z)-(a_i-\omega_B^c(z))^{-1}\big|\prec \frac{1}{\sqrt{N\eta}}\,. \label{091270}
\end{align}
Switching the roles of $A$ and $B$ as well as $U$ and $U^*$, and further using~\eqref{091835}, we also get 
\begin{align}
\big|\mathcal{G}_{ii}^{(i)}(z)-(b_i-\omega_A^c(z))^{-1}\big|\prec \frac{1}{\sqrt{N\eta}}\,,\qquad \big|\mathcal{G}_{ii}(z)-(b_i-\omega_A^c(z))^{-1}\big|\prec \frac{1}{\sqrt{N\eta}}\,, \label{091271}
\end{align}
under~\eqref{091251}. 

Now, we state the conclusions~\eqref{091270} and~\eqref{091271} in a more explicit quantitative form
 assuming~\eqref{a priori bound on lambda_d} which is a quantitative form of~\eqref{091250}-\eqref{091251}.
Namely, we show that the inequalities
\begin{align}
\big|G_{ii}^{(i)}(z)-(a_i-\omega_B^c(z))^{-1}\big|\leq \frac{N^{\frac{\varepsilon}{2}}}{\sqrt{N\eta}}\,,\qquad \big|G_{ii}(z)-(a_i-\omega_B^c(z))^{-1}\big|\leq \frac{N^{\frac{\varepsilon}{2}}}{\sqrt{N\eta}}\,, \nonumber\\
\big|\mathcal{G}_{ii}^{(i)}(z)-(b_i-\omega_A^c(z))^{-1}\big|\leq  \frac{N^{\frac{\varepsilon}{2}}}{\sqrt{N\eta}}\,,\qquad \big|\mathcal{G}_{ii}(z)-(b_i-\omega_A^c(z))^{-1}\big|\leq \frac{N^{\frac{\varepsilon}{2}}}{\sqrt{N\eta}}\,. 
\label{bound of G_ii on the event Xi and Omega}
\end{align}
hold on the event $\Theta_{\mathrm{d}}(z,N^{-\frac{\gamma}{4}})\cap \Omega_{\mathrm{d}}(z)$, when $N\geq N_3(D,\gamma, \varepsilon)$.
Here $\Omega_{\mathrm{d}}(z)$ is an event determined as the intersection of the ``typical" events in all
 the concentration estimates in Sections~\ref{s.5}-\ref{section partial concentration}. 

 To see this more precisely,  we go back to the proofs in these sections. The concentration estimates
  always involved quantities of the form  $\mathbb{IE}_{\mathbf{g}_i}[\mathbf{g}_i^*Q\mathbf{x}]$ with 
  $\mathbf{x}=\mathbf{g}_i, \mathbf{e}_i$ and some explicit matrix $Q$ 
  that is independent of $\mathbf{g}_i$ but often $z$-dependent. The total number of such estimates
  was linear in $N$. Thus,  according to Lemma \ref{lem.091720}, for any (small) $\varepsilon>0$ and (large) $D>0$, there exists an event $\Omega_{\mathrm{d}}(z,D, \varepsilon)$ with
\begin{align}
\mathbb{P}\big(\Omega_{\mathrm{d}}(z,D,\varepsilon)\big)\geq 1-N^{-D} \label{probability of omega_d}
\end{align}
such that all estimates of the form
\begin{align}
|\mathbb{IE}_{\mathbf{g}_i}[\mathbf{g}_i^*Q\mathbf{e}_i]|\leq \frac{N^{\frac{\varepsilon}{4}}}{\sqrt{N}}\|Q\mathbf{e}_i\|_2\,, \qquad\qquad  |\mathbb{IE}_{\mathbf{g}_i}[\mathbf{g}_i^*Q\mathbf{g}_i]|\leq \frac{N^{\frac{\varepsilon}{4}}}{N}\| Q\|_2 \label{bound of IE on Omega_d}
\end{align}
in Sections~\ref{s.5}-\ref{section partial concentration} hold on $\Omega_{\mathrm{d}}(z,D,\varepsilon)$ for all $N\geq N_2(D,\varepsilon)$. In addition, the threshold $N_2(D,\varepsilon)$ is independent~of the spectral parameter~$z$.

We  now follow the proofs in Sections~\ref{s.5}--\ref{section partial concentration} to the letter but we 
  use~\eqref{probability of omega_d},~\eqref{bound of IE on Omega_d} and~\eqref{a priori bound on lambda_d} instead of the $\prec$ relation. Instead of~\eqref{091270} and~\eqref{091271}, we find that the analogous but more quantitative bounds ~\eqref{bound of G_ii on the event Xi and Omega} hold
    on the intersection of the events $\Theta_{\mathrm{d}}(z, N^{-\frac{\gamma}{4}})$ and $\Omega_{\mathrm{d}}(z,D,\varepsilon)$.

It remains to show that on the event $\Theta_{\mathrm{d}}(z,N^{-\frac{\gamma}{4}})\cap \Omega_{\mathrm{d}}(z)$, 
\begin{align}
|\omega_A^c(z)-\omega_A(z)|\leq \frac{CN^{\frac{\varepsilon}{2}}}{\sqrt{N\eta}}\,,\qquad  |\omega_B^c(z)-\omega_B(z)|\leq \frac{CN^{\frac{\varepsilon}{2}}}{\sqrt{N\eta}}\, \label{0912801}
\end{align}
hold when $N\geq N_3(D,\gamma, \varepsilon)$.

To this end, we use the stability of the system $\Phi_{\mu_A,\mu_B}(\omega_A,\omega_B,z)=0$ as formulated in Lemma~\ref{le lemma perturbation of system}. By the definition of the approximate subordination functions $\omega_A^c(z)$ and~$\omega_B^c(z)$ in~\eqref{091260}, by the identity~\eqref{091850} and by taking the average over the index $i$ in the estimates in~\eqref{bound of G_ii on the event Xi and Omega}, we get the system of equations
\begin{align}
&m_H(z)=m_A(\omega_B^c(z))+r_A(z)\,,\nonumber\\
&m_H(z)=m_B(\omega_A^c(z))+r_B(z)\,,\nonumber\\
&\omega_A^c(z)+\omega_B^c(z)=z-\frac{1}{m_H(z)}\,, \label{091280}
\end{align}
where the error terms $r_A$ and $r_B$ satisfy
\begin{align*}
|r_A(z)|\leq \frac{CN^{\frac{\varepsilon}{2}}}{\sqrt{N\eta}}\,,\qquad |r_B(z)|\leq\frac{CN^{\frac{\varepsilon}{2}}}{\sqrt{N\eta}}\,,
\end{align*}
on the event $\Theta_{\mathrm{d}}(z,N^{-\frac{\gamma}{4}})\cap \Omega_{\mathrm{d}}(z)$ when $N\geq N_3(D,\gamma, \varepsilon)$.
Using the definition of $\Theta_{\mathrm{d}}(z,\delta)$ in~\eqref{def of Xi},~\eqref{bound of G_ii on the event Xi and Omega} and the fact that $z\in \mathcal{S}_{\mathcal{I}}(\eta_{\mathrm{m}},1)$,  so $\omega_A(z)$ and $\omega_B(z)$ 
are well separated from the real axis, we have
\begin{align}
|\omega_A^c(z)-\omega_A(z)|\leq  CN^{-\frac{\gamma}{4}}\,,\qquad  |\omega_B^c(z)-\omega_B(z)|\leq CN^{-\frac{\gamma}{4}}\,. \label{091715}
\end{align}
on the event $\Theta_{\mathrm{d}}(z,N^{-\frac{\gamma}{4}})\cap \Omega_{\mathrm{d}}(z)$ when $N\geq N_3(D,\gamma, \varepsilon)$. 
Hence, plugging the third equation of~\eqref{091280} into the first two and using~\eqref{le lower bound on omega AB} together with~\eqref{091715}, we get
\begin{align*}
\Phi_{\mu_A,\mu_B}(\omega_A^c(z),\omega_B^c(z),z)=\widetilde{r}(z)\,,
\end{align*}
where $\widetilde{r}(z)=(\widetilde{r}_A(z),\widetilde{r}_B(z))$ with
\begin{align}
|\widetilde{r}_A(z)|\leq \frac{CN^{\frac{\varepsilon}{2}}}{\sqrt{N\eta}}\,,\qquad |\widetilde{r}_B(z)|\leq \frac{CN^{\frac{\varepsilon}{2}}}{\sqrt{N\eta}}\, \label{091701}
\end{align}
on the event $\Theta_{\mathrm{d}}(z,N^{-\frac{\gamma}{4}})\cap \Omega_{\mathrm{d}}(z)$ when $N\geq N_3(D,\gamma, \varepsilon)$.
Therefore, by  Lemma~\ref{le lemma perturbation of system}, we get~\eqref{0912801}. Hence, we completed the proof of Lemma~\ref{lem.091282}. 
\end{proof}

Given Lemma~\ref{lem.091282}, we next prove Theorem \ref{thm.091201} via a continuity argument similarly to \cite{EYY}. 
 \begin{proof}[Proof of~\eqref{0912100} of Theorem~\ref{thm.091201}] 
 From Theorem~1.2 of~\cite{Kargin}, we see that for $\eta=1$, we have
\begin{align}
\max_{ i\in\llbracket 1,N\rrbracket}\big| G_{ii}(z)-(a_i-\omega_B(z))^{-1}\big|\prec N^{-\frac{\gamma}{2}},\qquad \max_{ i\in\llbracket 1,N\rrbracket}\big| \mathcal{G}_{ii}(z)-(b_i-\omega_A(z))^{-1}\big|\prec N^{-\frac{\gamma}{2}}  \label{0912105}
\end{align}
if $0<\gamma\leq 1/7$ (say). In addition, owing to the estimate $\|G\|\leq 1/\eta$, assumption~\eqref{0911111} obviously holds for $\eta=1$. Hence, by Lemma~\ref{lem.091060} in the case of $i=j=k$ and its analogue for $\mathcal{G}_{ii}^{(i)}$, we have
\begin{align}
\max_{ i\in\llbracket 1,N\rrbracket}\big| G_{ii}^{(i)}(z)-(a_i-\omega_B(z))^{-1}\big|\prec N^{-\frac{\gamma}{2}}\,,\qquad \max_{ i\in\llbracket 1,N\rrbracket}\big| \mathcal{G}_{ii}^{(i)}(z)-(b_i-\omega_A(z))^{-1}\big|\prec N^{-\frac{\gamma}{2}}. \label{0912106}
\end{align}
Hence, for any $E\in\mathcal{I}$ and $D>0$,
\begin{align}
\mathbb{P}\big(\Theta_{\mathrm{d}}(E+\mathrm{i}, N^{-\frac{3\gamma}{8}})\big)\geq 1-N^{-D}, \label{initial step for diagonal case}
\end{align}
holds for all $N\geq N_0(D,\gamma)$ with some $N_0(D,\gamma)>0$.  In the sequel we will apply Lemma~\ref{lem.091282}
with the choice 
\begin{align*}
N_1(D,\gamma,\varepsilon)\deq\max\big\{N_0(D,\gamma), N_2(D,\varepsilon)\big\}\,;
\end{align*}
 in particular we have $N_3(D,\gamma,\varepsilon) = N_1(D,\gamma,\varepsilon)$.

Next, we define the lattice
\begin{align}
\widehat{\mathcal{S}}_{\mathcal{I}}(\eta_{\mathrm{m}},1)\deq \mathcal{S}_{\mathcal{I}}(\eta_{\mathrm{m}},1)\cap N^{-5}\{\Z\times \ii \Z\}\,. \label{100715}
\end{align}
Thanks to the Lipschitz continuity of the Green function, \ie $\|G(z)-G(z')\|\leq N^2|z-z'|$ for any $z,z'\in \mathcal{S}_{\mathcal{I}}(\eta_{\mathrm{m}},1)$, it suffices to show~\eqref{0912100} on the lattice $ \widehat{\mathcal{S}}_{\mathcal{I}}(\eta_{\mathrm{m}},1)$. We now fix $E\in \mathcal{I}\cap N^{-5}\mathbb{Z}$ and decrease $\eta$ from  $\eta=1$ down to $N^{-1+\gamma}$ in steps of size $N^{-5}$. Recall the events $\Theta_{\mathrm{d}}(z,\delta)$ and $\Omega_{\mathrm{d}}(z)$ in Lemma \ref{lem.091282}, and choose the same $\varepsilon<\frac{\gamma}{8}$ in $ \Omega_{\mathrm{d}}(z,D,\varepsilon)$ for all $z$. For simplicity, we omit the real  part $E$ from the notation and rewrite
\begin{align*}
\Theta_{\mathrm{d}}(\eta,\delta)\deq\Theta_{\mathrm{d}}(E+\mathrm{i}\eta,\delta),\qquad  \Omega_{\mathrm{d}}(\eta)\deq\Omega_{\mathrm{d}}(E+\mathrm{i}\eta).
\end{align*}
Our aim is to show that for any $\eta\in [\eta_{\mathrm{m}},1]$, 
\begin{align}
\Theta_{\mathrm{d}}\big(\eta, N^{-\frac{3\gamma}{8}}\big)\cap \Omega_{\mathrm{d}} (\eta-N^{-5})\subset \Theta_{\mathrm{d}}\big(\eta-N^{-5}, N^{-\frac{3\gamma}{8}}\big). \label{key inclusion}
\end{align}
To see (\ref{key inclusion}), we first notice that by the Lipschitz continuity of the Green function and of the subordination functions $\omega_A(z)$ and $\omega_B(z)$ (see~\eqref{091460}),  we have 
\begin{align}
\Theta_{\mathrm{d}}\big(\eta, N^{-\frac{3\gamma}{8}}\big)\subset \Theta_{\mathrm{d}}\big(\eta-N^{-5}, N^{-\frac{3\gamma}{8}}+
 C N^{-3}\big)\subset \Theta_{\mathrm{d}}\big(\eta-N^{-5}, N^{-\frac{\gamma}{4}}\big), \label{one step continuity}
\end{align}
where the last step is obtained by choosing $\gamma>0$ sufficiently small. Now, we start from (\ref{initial step for diagonal case}). By (\ref{one step continuity}), we get
\begin{align*}
\mathbb{P}\big(\Theta_{\mathrm{d}}\big(1-N^{-5}, N^{-\frac{\gamma}{4}}\big)\big)\geq \mathbb{P}\big(\Theta_{\mathrm{d}}\big(1, N^{-\frac{3\gamma}{8}}\big)\big)\geq 1-N^{-D}.
\end{align*}
Hence, we can use Lemma~\ref{lem.091282} to get 
\begin{align}
\Theta_{\mathrm{d}}\big(1-N^{-5}, N^{-\frac{\gamma}{4}}\big)\cap \Omega_{\mathrm{d}}(1-N^{-5})&\subset \Theta_{\mathrm{d}}\bigg(1-N^{-5}, \frac{N^\varepsilon}{\sqrt{N(1-N^{-5})}}\bigg)\nonumber\\ &\subset \Theta_{\mathrm{d}}\Big(1-N^{-5}, N^{-\frac{3\gamma}{8}}\Big)\,, \label{apply lemma 7.1 to smaller eta}
\end{align}
which together with (\ref{one step continuity}) implies~\eqref{key inclusion} with $\eta=1$. Now, replacing $1$ by $1-N^{-5}$, we get from~\eqref{apply lemma 7.1 to smaller eta}, ~\eqref{initial step for diagonal case} and the fact $\mathbb{P}(\Omega_{\mathrm{d}}(1-N^{-5}))\geq 1-N^{-D}$ for $N\geq N_2(D,\varepsilon)$ that
\begin{align}
\mathbb{P}\big(\Theta_{\mathrm{d}}\big(1-N^{-5}, N^{-\frac{3\gamma}{8}}\big)\big)\geq 1-2N^{-D} \label{second step for diagonal case}
\end{align}
holds for all $N\geq N_3(D,\gamma,\varepsilon)$. 
Now, using~\eqref{second step for diagonal case} instead of~\eqref{initial step for diagonal case}, we get~\eqref{key inclusion} for $\eta=1-N^{-5}$. Iterating this argument, we obtain for any $\eta\in [\eta_\mathrm{m},1]\cap N^{-5}\mathbb{Z}$ that
\begin{align*}
\Theta_{\mathrm{d}}\big(1, N^{-\frac{3\gamma}{8}}\big)\cap \Omega_{\mathrm{d}} (1-N^{-5})\cap\ldots \cap\Omega_{\mathrm{d}} (\eta)\subset \Theta_{\mathrm{d}}\big(\eta, N^{-\frac{3\gamma}{8}}\big)\,.
\end{align*}
Hence, we have
\begin{align*}
\mathbb{P}\big(\Theta_{\mathrm{d}}\big(\eta, N^{-\frac{3\gamma}{8}}\big)\big)\geq 1-N^{-D}\big(1+N^5(1-\eta)\big)
\end{align*}
for all $N\geq N_3(D,\gamma,\varepsilon)$, which further implies  
\begin{align*}
\mathbb{P}\big(\Theta_{\mathrm{d}}\big(\eta-N^{-5}, N^{-\frac{\gamma}{4}}\big)\big)\geq 1-N^{-D}\big(1+N^5(1-\eta)\big)
\end{align*}
for all $N\geq N_3(D,\gamma,\varepsilon)$, by using~\eqref{one step continuity}. Then, using Lemma~\ref{lem.091282} again, we obtain 
\begin{align}
\mathbb{P}\Big(\Theta_{\mathrm{d}}\Big(\eta, \frac{N^\varepsilon}{\sqrt{N\eta}}\Big)\Big)\geq 1-N^{-D}\big(2+N^5(1-\eta)\big)
\end{align}
uniformly for all $\eta\in [\eta_\mathrm{m},1]\cap N^{-5}\mathbb{Z}$, when $N\geq N_3(D,\gamma,\varepsilon)$. Finally, by continuity, we can  extend the bounds from $z$ in the discrete lattice to the entire domain
$\mathcal{S}_{\mathcal{I}}(\eta_{\mathrm{m}},1)$. We then~get
\begin{align}
\max_{i\in\llbracket 1,N\rrbracket}&\Big|G_{ii}(z)- \frac{1}{a_i-\omega_B(z)}\Big|\prec \frac{1}{\sqrt{N\eta}}\,,\qquad \max_{i\in\llbracket 1,N\rrbracket}\Big|G_{ii}^{(i)}(z)- \frac{1}{a_i-\omega_B(z)}\Big|\prec \frac{1}{\sqrt{N\eta}}\,, \label{estimate for both G_ii and G_ii^i}
\end{align}
uniformly on $\mathcal{S}_{\mathcal{I}}(\eta_{\mathrm{m}},1)$, where we used the definitions of $\Theta_{\mathrm{d}}(z,\delta)$ in~\eqref{def of Xi} and of $\prec$ in Definition \ref{definition of stochastic domination}. This concludes the proof of~\eqref{0912100}.
\end{proof}

Having established~\eqref{0912100},~\eqref{0913200} of Theorem~\ref{thm.091201} and Theorem~\ref{thm.091201} are direct consequences.
\begin{proof}[Proof of~\eqref{0913200} of Theorem~\ref{thm.091201}] 
 It suffices to note that~\eqref{0913200} is a direct consequence of~\eqref{0912100} and the facts $m_H(z)=N^{-1}\sum_{i=1}^NG_{ii}(z)$ and $m_{A\boxplus B}(z)=N^{-1}\sum_{i=1}^N(a_i-\omega_B)^{-1}$. 
\end{proof}
\begin{proof}[Proof of Theorem \ref{thm.091501}] Using the spectral decomposition of the Green function $G$, we have
\begin{eqnarray}
\max_{j\in \llbracket 1,N\rrbracket}\Im G_{jj}(z)=\max_{j\in \llbracket 1,N\rrbracket}\sum_{i=1}^N\frac{|u_{ij}|^2\eta}{|\lambda_i-E|^2+\eta^2}=\sum_{i=1}^N\frac{||\mathbf{u}_i||_\infty^2\eta}{|\lambda_i-E|^2+\eta^2}\,,\qquad z\in\C^+\,. \label{031331}
\end{eqnarray}
Fix a small $\gamma>0$. For any $\lambda_i\in \mathcal{I}$, we set $E=\lambda_i$ on the right side of~\eqref{031331} and use~\eqref{0912100} to bound the left side of it with $z=\lambda_i+\ii\eta$, $\eta=N^{-1+\gamma}$. Then we obtain
\begin{eqnarray*}
{||\mathbf{u}_{i}||^2_\infty}\prec \eta=N^{-1+\gamma}\,.
\end{eqnarray*}
Since $\gamma>0$ is arbitrarily small, we get~\eqref{091511}. This completes the proof of Theorem~\ref{thm.091501}.\qedhere
\end{proof}

\section{Proof of Theorem~\ref{thm.091201}: Inequalities~\eqref{100710}} \label{s.8}
In this section, we prove~\eqref{100710} of Theorem~\ref{thm.091201}. Note that, from (\ref{estimate for both G_ii and G_ii^i}) in the proof of~\eqref{0912100} in Theorem~\ref{thm.091201}, we know that the following estimates hold uniformly on $ \mathcal{S}_{\mathcal{I}}(\eta_{\mathrm{m}},1)$,
\begin{align}
\Big| G_{ii}^{(i)}(z)-\big(a_i-\omega_B(z)\big)^{-1}\Big| \prec \frac{1}{\sqrt{N\eta}}\,,\quad  \Big| G_{ii}(z)-\big(a_i-\omega_B(z)\big)^{-1}\Big| \prec \frac{1}{\sqrt{N\eta}}\,. \label{100501}
\end{align}
  Taking~\eqref{100501} as an input, we follow the discussion in Sections~\ref{s.6}-\ref{s.7} to prove the estimate~\eqref{100710} with the following modifications. We introduce the quantities
\begin{align}\label{le off-diagonal T and S}
T_{i,j}(z)\deq\mathbf{g}_i^*G^{{ (i)}}(z)\mathbf{e}_j\,,\qquad  S_{i,j}(z)\deq\mathbf{g}_i^*\wt{B}^{\la i\ra}G^{{ (i)}}(z) \mathbf{e}_j\,,\qquad\qquad z\in\C^+\,.
\end{align}
that generalize $T_i(z)$ and $S_i(z)$ defined in~\eqref{le T and S}. In particular, $T_i(z)\equiv T_{i,i}(z)$ and $S_i(z)\equiv S_{i,i}(z)$, but we henceforth implicitly assume that $i\not=j$. (We use a comma in the subscripts of $T_{i,j}$, $S_{i,j}$ since they are not the entries of some matrix.) We often abbreviate $T_{i,j}\equiv T_{i,j}(z)$ and $S_{i,j}\equiv S_{i,j}(z)$.

We first establish the concentration estimates for $G_{ij}^{(i)}$ (see Lemma~\ref{lem093011}), and $T_{i,j}$ and $S_{i,j}$; see Lemma~\ref{cor1000101}. In Proposition~\ref{pro.100501} we then derive self-consistent equations for $\E_{\mathbf{g}_i}\big[T_{i,j}\big]$ and $\E_{\mathbf{g}_i}\big[S_{i,j}\big]$ that will show, together with concentration estimates, that $|G_{ij}^{(i)}|$, $|T_{i,j}|$, $|S_{i,j}|\prec \frac{1}{\sqrt{N\eta}}$, provided that $|G_{ij}^{(i)}| \prec 1$. We then close the argument via continuity.

We start with the analogue of Lemma \ref{lem090501} for the off-diagonal entries of $G^{{ (i)}}$. 
\begin{lemma}  \label{lem093011} Suppose that the assumptions of Theorem~\ref{thm.091201} are satisfied and let $\gamma>0$. Fix $z=E+\ii\eta\in \mathcal{S}_{\mathcal{I}}(\eta_{\mathrm{m}},1)$ and assume that 
\begin{align}
|G_{ij}^{(i)}(z)|\prec 1 \label{093022}\,,
\end{align}
for all $i,j\in\llbracket 1,N\rrbracket$, $i\not=j$. Then
\begin{align}
&\big|\IE_{\mathbf{g}_i}[G_{ij}^{(i)}(z)]\big|\prec \frac{1}{\sqrt{N\eta}}\,,\label{093013}
\end{align} 
for all $i,j\in\llbracket 1,N\rrbracket$, $i\not=j$. 
\end{lemma}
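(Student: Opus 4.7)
The plan is to mimic the proof of Lemma~\ref{lem090501}, with considerable simplifications stemming from the fact that $\mathbf{e}_i$ is a common left and right eigenvector of $H^{[i]}$. First, applying the rank-one perturbation formula as in~\eqref{091004} to $G^{(i)}_{ij}$ for $j\neq i$, I express $G^{(i)}_{ij}$ in terms of $G^{\{i\}}_{ij}$ over a denominator whose absolute value is bounded below by a positive constant (as in the derivation of~\eqref{100505} from~\eqref{091012}). Applying $\IE_{\mathbf{g}_i}$ and invoking the assumption $|G^{(i)}_{ij}|\prec 1$ together with the estimate $|\IE_{\mathbf{g}_i}[G^{\{i\}}_{ii}]|\prec 1/\sqrt{N\eta}$ from Lemma~\ref{lem090501}, the task reduces to proving $|\IE_{\mathbf{g}_i}[G^{\{i\}}_{ij}(z)]|\prec 1/\sqrt{N\eta}$.

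Next, I use the expansion~\eqref{090610}: $G^{\{i\}}_{ij}=G^{[i]}_{ij}+\Psi_{i,j}/(1+\Xi_i)$. Since $\widetilde B^{\la i\ra}\mathbf{e}_i=b_i\mathbf{e}_i$ and $A\mathbf{e}_i=a_i\mathbf{e}_i$, the matrix $H^{[i]}$ leaves both $\C\mathbf{e}_i$ and $\mathbf{e}_i^{\perp}$ invariant; hence $G^{[i]}$ is block-diagonal with respect to this splitting, and in particular $G^{[i]}_{ij}=0$ for $j\neq i$. Writing $\mathbf{w}_i=\mathbf{e}_i+\mathbf{g}_i$, the two bilinear forms of $\Psi_{i,j}$ in~\eqref{090811} that involve $\mathbf{e}_j$, namely $\mathbf{w}_i^*G^{[i]}\mathbf{e}_j$ and $\bs{s}_i^*G^{[i]}\mathbf{e}_j$, collapse (using $\mathbf{e}_i^*\widetilde B^{\la i\ra}=b_i\mathbf{e}_i^*$ and the block structure) to the centered Gaussian inner products $\mathbf{g}_i^*G^{[i]}\mathbf{e}_j$ and $-\mathbf{g}_i^*\widetilde B^{\la i\ra}G^{[i]}\mathbf{e}_j$. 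By Lemma~\ref{lem.091720}, both are stochastically dominated by $\|G^{[i]}\mathbf{e}_j\|_2/\sqrt N$, while the remaining scalar factors of each monomial in $\Psi_{i,j}$ were shown to be $\OSD(1)$ in the proof of Lemma~\ref{lem090501}. Consequently, an \emph{a priori} bound $\|G^{[i]}\mathbf{e}_j\|_2\prec 1/\sqrt\eta$ would at once yield the strong pointwise bound $|\Psi_{i,j}|\prec 1/\sqrt{N\eta}$, which combined with $|(1+\Xi_i)^{-1}|\prec 1$ (item $(ii)$ of~\eqref{092010}) gives $|\IE_{\mathbf{g}_i}[\Psi_{i,j}/(1+\Xi_i)]|\prec 1/\sqrt{N\eta}$ with essentially no further computation.

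The main technical obstacle is therefore the norm bound $\|G^{[i]}\mathbf{e}_j\|_2\prec 1/\sqrt\eta$. I plan to derive it through a chain of perturbations. From the diagonal estimate~\eqref{100501} one has $\|G^{(i)}\mathbf{e}_j\|_2^2=\im G^{(i)}_{jj}/\eta\prec 1/\eta$. Applying~\eqref{091002Kevin} to pass from $G^{(i)}$ to $G^{\{i\}}$ and using the assumed bound on $|G^{(i)}_{ij}|$ to control the rank-one correction yields $\|G^{\{i\}}\mathbf{e}_j\|_2\prec 1/\sqrt\eta$. Finally, the rank-two relation $H^{[i]}-H^{\{i\}}=\mathbf{w}_i\bs{s}_i^*+\bs{t}_i\mathbf{w}_i^*$ from~\eqref{092001} transfers this estimate to $G^{[i]}$, the self-correction being controlled by $\|G^{\{i\}}\mathbf{w}_i\|_2,\|G^{\{i\}}\bs{t}_i\|_2\prec 1/\sqrt\eta$. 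These two auxiliary bounds follow from Gaussian concentration of $\mathbf{g}_i$ together with the inequality $\ntr(G^{\{i\}*}G^{\{i\}})\le \ntr(\im G^{\{i\}})/\eta$, which in turn follows from the positive-definiteness of $-\im(H^{\{i\}}-zI)=\eta I+(\im\omega_B-\im z)\mathbf{e}_i\mathbf{e}_i^*$ combined with the bound $|\ntr G^{\{i\}}|\prec 1$ that is obtained via~\eqref{091002} from the assumption~\eqref{100501}.
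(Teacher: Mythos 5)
Your structural reduction is sound and in fact matches the paper's strategy closely: the rank-one identity~\eqref{091004} reduces the problem to bounding $\IE_{\mathbf{g}_i}[G^{\{i\}}_{ij}]$, the expansion~\eqref{090610} further reduces this to controlling $\Psi_{i,j}$ (using $(i)$, $(ii)$ of~\eqref{092010}), and your observation that $G^{[i]}_{ij}=0$ for $j\neq i$ together with the collapse of $\mathbf{w}_i^*G^{[i]}\mathbf{e}_j$ and $\bs{s}_i^*G^{[i]}\mathbf{e}_j$ to centered Gaussian linear forms $\mathbf{g}_i^*G^{[i]}\mathbf{e}_j$, $-\mathbf{g}_i^*\widetilde B^{\la i\ra}G^{[i]}\mathbf{e}_j$ is correct and correctly identifies $\|G^{[i]}\mathbf{e}_j\|_2\prec 1/\sqrt{\eta}$ (equivalently $|G^{[i]}_{jj}|\prec 1$) as the crux.

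However, the way you try to establish this last bound is where the argument breaks. Your perturbative chain is anchored in the claim that $\|G^{(i)}\mathbf{e}_j\|_2^2=\im G^{(i)}_{jj}/\eta\prec 1/\eta$ follows from~\eqref{100501}. It does not: \eqref{100501} bounds $G^{(i)}_{ii}$ and $G_{ii}$ with matching indices, but says nothing about the entry $G^{(i)}_{jj}$ for $j\neq i$. To obtain $|G^{(i)}_{jj}|\prec 1$ by comparison with $|G_{jj}|\prec 1$ via Lemma~\ref{lem.091060}, you would need the hypotheses~\eqref{0911111}, which include $|\mathbf{g}_i^*G^{(i)}\mathbf{e}_j|\prec 1$ and $|\mathbf{g}_i^*\widetilde B^{\la i\ra}G^{(i)}\mathbf{e}_j|\prec 1$ --- precisely the off-diagonal quantities $T_{i,j}$, $S_{i,j}$ that are only controlled later (Lemma~\ref{cor1000101}, whose proof in turn invokes~\eqref{100510}), making the argument circular. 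The paper breaks this circle with a structural observation that bypasses perturbation entirely: $H^{[i]}$ is \emph{block-diagonal}, with $\mathbf{e}_i$ as an eigenvector and its $(i,i)$-minor equal to $A^i+U^iB^i(U^i)^*$, an independent Haar model of size $N-1$ that again satisfies the hypotheses of Theorem~\ref{thm.091201}. Applying the already-proven local law~\eqref{0912100} to this minor gives $\max_{j\neq i}|G^{[i]}_{jj}|\prec 1$ directly (equation~\eqref{100510}), with no chain and no dependence on off-diagonal a priori control. You should replace your chain argument with this block-diagonal self-application of the local law; the rest of your proof then goes through.
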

\begin{proof}[Proof of Lemma \ref{lem093011}] 
Recall $H^{[i]}$ and $H^{\{i\}}$ defined in~\eqref{091401} and~\eqref{092002}, as well as their Green functions $G^{[i]}$ and $G^{\{i\}}$. In the proof of Lemma~\ref{lem090501} we derived the identity
\begin{align}
G_{ij}^{(i)}(z)=\frac{G_{ij}^{\{i\}}(z)-(b_i+\omega_B(z)-z)\IE_{\mathbf{g}_i}[G^{\{i\}}_{ii}(z)]\,G_{ij}^{(i)}(z)}{1+(b_i+\omega_B(z)-z)\mathbb{E}_{\mathbf{g}_i}[G^{\{i\}}_{ii}(z)]}\,; \label{093020}
\end{align}
see~\eqref{091004}. With~\eqref{100501}, we see that assumption~\eqref{090830} is satisfied. Hence, we can use all the conclusions in the proof of Lemma~\ref{lem090501}.  Therefore, according to~\eqref{100505},~\eqref{091012} and assumption~\eqref{093022}, it suffices to show the concentration estimate $|\IE_{\mathbf{g}_i}[G^{\{i\}}_{ij}(z)]|\prec \frac{1}{\sqrt{N\eta}}$. To this end we expand $G^{\{i\}}$ around $G^{[i]}$. Recall from~\eqref{090610} that
\begin{align*}
G_{ij}^{\{i\}}=G^{[i]}_{i,j}+\frac{\Psi_{i,j}}{1+\Xi_i}\,,
\end{align*}
where~$\Xi_i$ is defined in~\eqref{090837} and $\Psi_{i,j}$ is defined in~\eqref{090811}. Recalling statements $(i)$ and $(ii)$ in~\eqref{092010}, it suffices to establish that $|\IE_{\mathbf{g}_i}[\Psi_{i,j}]|\prec\frac{1}{\sqrt{N\eta}}$. Note that $\Psi_{i,j}$ contains the terms listed in~\eqref{le hura},~\eqref{le B} and~\eqref{le bubu}, as well as the terms
\begin{align}
\mathbf{e}_i^*\wt{B}^{\la i\ra}G^{[i]}\mathbf{e}_j\,, \qquad\quad \mathbf{g}_i^*G^{[i]}\mathbf{e}_j\,,\qquad\quad \mathbf{g}_i^*\wt{B}^{\la i\ra}G^{[i]}\mathbf{e}_j\,. \label{100507}
\end{align}
Since $\mathbf{e}_i$ is an eigenvector of $\wt{B}^{\la i\ra}$  and of $H^{[i]}$, we have $\mathbf{e}_i^*\wt{B}^{\la i\ra}G^{[i]}\mathbf{e}_j=\delta_{ij} b_iG^{[i]}_{ii}$.
Moreover, using Lemma \ref{lem.091720} with $Q^{\la i \ra}=I$ or $\wt{B}^{\la i\ra}$, we have
\begin{align}
\big|\mathbf{g}_i^*Q^{\la i \ra}G^{[i]}\mathbf{e}_j\big|\prec \frac{1}{\sqrt{N}} \|Q^{\la i \ra}G^{[i]}\mathbf{e}_j\|_2\prec \frac{1}{\sqrt{N}} \|G^{[i]}\mathbf{e}_j\|_2=\bigg(\frac{\Im G_{jj}^{[i]}}{N\eta}\bigg)^{\frac12}\,. \label{100509}
\end{align}
To control $  G_{jj}^{[i]}$, we recall from~\eqref{091401} that the matrix $H^{[i]}$ is block-diagonal and we thus have, for $j\neq i$
\begin{align*}
G_{jj}^{[i]}=(A^i+U^iB^i(U^i)^*-zI_{N-1})^{-1}_{\ell\ell}\,,\qquad \ell\deq j\mathbf{1}(j<i)+(j-1)\mathbf{1}(j>i)\,,
\end{align*}
where $A^i$ and $B^i$ the are $(i,i)$-matrix minors of $A$ and $B$ respectively (obtained by removing the $i$th column and $i$th row) and $U^i\in U(N-1)$ is the $(i,i)$-matrix minor of $U^{\la i \ra}$ which is Haar distributed as seen at the beginning of Section~\ref{s.5}. Note that the matrix $A^i+U^iB^i(U^i)^*$ satisfies the assumptions of Theorem~\ref{thm.091201}. We thus have the estimate
\begin{align}
\max_{i\neq j}\big|G^{[i]}_{jj}(z)\big|\prec 1\,,\qquad\qquad z\in \mathcal{S}_{\mathcal{I}}(\eta_{\mathrm{m}},1)\,.  \label{100510}
\end{align}
Plugging this bound into~\eqref{100509} we get $|\mathbf{g}_i^*Q^{\la i \ra}G^{[i]}\mathbf{e}_j\big| \prec \frac{1}{\sqrt{N\eta}}$. The remaining part of the proof is nearly the same as the one of Lemma \ref{lem090501}. We omit the details.
\end{proof}

We have the following analogue of Corollary~\ref{cor090501}.
\begin{lemma}  \label{cor1000101}  Suppose that the assumptions of Theorem~\ref{thm.091201} are satisfied and let $\gamma>0$. For all $z=E+\ii\eta\in \mathcal{S}_{\mathcal{I}}(\eta_{\mathrm{m}},1)$, we have the bounds 
\begin{align}
\max_{i\not=j}|T_{i,j}(z)|\prec 1\,,\qquad \max_{i\not=j} |S_{i,j}(z)|\prec 1 \label{100155}
\end{align}
and the concentration estimates
\begin{align}
\max_{i\not=j}\big|\IE_{\mathbf{g}_i}[T_{i,j}(z)]\big|\prec \frac{1}{\sqrt{N\eta}}\,, \qquad 
\max_{i\not=j}\big|\IE_{\mathbf{g}_i}[S_{i,j}(z)]\big|\prec \frac{1}{\sqrt{N\eta}}\,.\label{100152}
\end{align}
\end{lemma}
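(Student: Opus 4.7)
The strategy is to mimic the proof of Corollary~\ref{cor090501}, with $\mathbf{e}_j$ ($j\neq i$) in place of $\mathbf{e}_i$ in the second argument of the bilinear forms. The ingredients are in place: Lemma~\ref{lem093011} is the off-diagonal analogue of Lemma~\ref{lem090501}, and its proof already established the crucial bound $|G^{[i]}_{jj}(z)|\prec 1$ for $j\neq i$ (see~\eqref{100510}) by invoking the already-proved Theorem~\ref{thm.091201} on the $(N-1)$-dimensional minor $A^i+U^i B^i (U^i)^*$.

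To establish the bounds in~\eqref{100155}, I first apply the rank-one perturbation formula to pass from $G^{(i)}$ to $G^{\{i\}}$, using $H^{\{i\}}=H^{(i)}-(b_i+\omega_B-z)\mathbf{e}_i\mathbf{e}_i^*$. This gives, for $Q^{\la i\ra}\in\{I,\widetilde{B}^{\la i\ra}\}$,
\begin{align*}
\mathbf{g}_i^* Q^{\la i\ra} G^{(i)} \mathbf{e}_j = \mathbf{g}_i^* Q^{\la i\ra} G^{\{i\}} \mathbf{e}_j - \frac{(b_i+\omega_B-z)\,\mathbf{g}_i^* Q^{\la i\ra} G^{\{i\}} \mathbf{e}_i \cdot G^{\{i\}}_{ij}}{1+(b_i+\omega_B-z)G^{\{i\}}_{ii}}\,.
\end{align*}
Statement (ii) of~\eqref{092010} combined with~\eqref{091012} keeps the denominator bounded away from zero. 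I then expand $G^{\{i\}}$ around $G^{[i]}$ via the two-step rank-one identity~\eqref{090755}, obtaining a rational expression whose denominator is $(1+\Xi_i)$-type (again $\prec 1$ by (ii) of~\eqref{092010}) and whose numerator is a polynomial in the bilinear forms $\mathbf{x}_i^* Q_1 G^{[i]} Q_2 \mathbf{y}_i$ with $\mathbf{x}_i,\mathbf{y}_i\in\{\mathbf{e}_i,\mathbf{g}_i,\mathbf{e}_j\}$. The forms involving only $\mathbf{e}_i$ and $\mathbf{g}_i$ are $\prec 1$ by~\eqref{le hura} and~\eqref{le bubu}. For the new forms involving $\mathbf{e}_j$, I use that $H^{[i]}$ is block-diagonal with $\{i\}$ as a $1\times 1$ block (since $\mathbf{e}_i$ is an eigenvector of both $A$ and $\widetilde{B}^{\la i\ra}$), so $\mathbf{e}_j^* Q_1 G^{[i]} \mathbf{e}_i=0$ for $j\neq i$ and $Q_1\in\{I,\widetilde{B}^{\la i\ra}\}$; and the mixed forms $\mathbf{g}_i^* Q_1 G^{[i]} \mathbf{e}_j$ and $\mathbf{e}_j^* Q_1 G^{[i]} \mathbf{g}_i$ are controlled by Lemma~\ref{lem.091720} together with the identity $(|G^{[i]}|^2)_{jj}=\Im G^{[i]}_{jj}/\eta$ from~\eqref{wward} and $|G^{[i]}_{jj}|\prec 1$, yielding bounds of order $(N\eta)^{-1/2}$. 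Assembling these pieces gives $|T_{i,j}|, |S_{i,j}|\prec 1$.

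For the concentration estimates~\eqref{100152}, I follow verbatim the derivation of~\eqref{0907101} in Corollary~\ref{cor090501} with the obvious substitution of $\mathbf{e}_j$ for $\mathbf{e}_i$ in the right vector. The analogue of~\eqref{100601} to be verified is
\begin{align*}
\big|\IE_{\mathbf{g}_i}\big[\mathbf{g}_i^* Q^{\la i\ra} G^{\{i\}} \mathbf{e}_j\big]\big|\prec \frac{1}{\sqrt{N\eta}}\,,
\end{align*}
which again follows by expanding $G^{\{i\}}$ around the $\mathbf{g}_i$-independent $G^{[i]}$ as in~\eqref{0907110}, so that each term has the structure $\widetilde\Psi_{i,j}/(1+\Xi_i)$. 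The Gaussian concentration estimates in Lemma~\ref{lem.091720}, applied coordinate-wise to $\mathbf{g}_i$ and combined with the $(|G^{[i]}|^2)_{jj}$ identity, supply the required $(N\eta)^{-1/2}$ rate for each fluctuating bilinear form in $\widetilde\Psi_{i,j}$ and $\Xi_i$; the arithmetic rules of stochastic domination and the product-expansion trick~\eqref{le IE correlations} then yield the desired bound on the full partial fluctuation.

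The main obstacle is verifying that no new bilinear form involving $\mathbf{e}_j$ is only barely bounded, which would degrade the target rate. This is precisely what the block-diagonality of $H^{[i]}$ (via the eigenvector property of $\mathbf{e}_i$) and the a priori bound $|G^{[i]}_{jj}|\prec 1$ from~\eqref{100510} resolve; without the latter, one would be forced into a circular argument. Everything else is a careful bookkeeping of the same algebra that appears in the diagonal case, and I would therefore defer the remaining routine computations.
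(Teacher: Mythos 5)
Your proposal is correct and tracks the paper's own (very brief) proof faithfully: the paper simply invokes the analogy with Corollary~\ref{cor090501} using the two new inputs~\eqref{100501} and~\eqref{100510}, and you fill in exactly that algebra — the rank-one passage $G^{(i)}\to G^{\{i\}}\to G^{[i]}$, the block-diagonality of $H^{[i]}$ to kill the $\mathbf{e}_j^*Q_1G^{[i]}\mathbf{e}_i$-terms, the Ward-type identity for $\|G^{[i]}\mathbf{e}_j\|_2$, and the Gaussian concentration via Lemma~\ref{lem.091720}. You also correctly identify $|G^{[i]}_{jj}|\prec 1$ from~\eqref{100510} as the crucial ingredient that removes circularity, which is precisely what the paper's terse proof hinges on.
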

\begin{proof} With the estimates in~\eqref{100501} and~\eqref{100510}, the proof is analogous to that of Corollary~\ref{cor090501}. Here we get the conclusions for all $z=E+\ii\eta\in \mathcal{S}_{\mathcal{I}}(\eta_{\mathrm{m}},1)$ at once, since we use the uniform estimate~\eqref{100501} instead of assumption~\eqref{09083000} for one fixed $z$. We omit the details.
\end{proof}
 Finally, we have the following counterpart to Proposition~\ref{lem.0910111}. 
 \begin{proposition} \label{pro.100501} Fix $z=E+\ii\eta\in \mathcal{S}_{\mathcal{I}}(\eta_{\mathrm{m}},1)$. Under the assumptions of Lemma \ref{lem093011}, we have 
 \begin{align}
\max_{i\not=j}|G_{ij}^{(i)}(z)|\prec \frac{1}{\sqrt{N\eta}} \label{100170}
 \end{align}
 and 
 \begin{align}
\max_{i\not=j}|T_{i,j}(z)|\prec \frac{1}{\sqrt{N\eta}}\,,\qquad \qquad \max_{i\not=j}|S_{i,j}(z)|\prec \frac{1}{\sqrt{N\eta}} \,.\label{100520}
 \end{align}
 \end{proposition}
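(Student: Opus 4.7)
The plan is to mirror the proof of Proposition~\ref{lem.0910111} for the diagonal case, exploiting the simplification that the Kronecker delta ``$+1$'' term from the Green function identity is absent for $i\neq j$. By Lemmas~\ref{lem093011} and~\ref{cor1000101}, concentration of $G_{ij}^{(i)}$, $S_{i,j}$ and $T_{i,j}$ around their $\mathbf{g}_i$-partial expectations already holds at scale $1/\sqrt{N\eta}$, so it suffices to bound $|\E_{\mathbf{g}_i}G_{ij}^{(i)}|$, $|\E_{\mathbf{g}_i}S_{i,j}|$, $|\E_{\mathbf{g}_i}T_{i,j}|$ by $1/\sqrt{N\eta}$. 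Starting from $(A-z)G^{(i)} = -\wt{B}^{(i)}G^{(i)} + I$, the $(i,j)$-entry with $i\neq j$ gives $(a_i - z)G_{ij}^{(i)} = -(\wt{B}^{(i)}G^{(i)})_{ij}$. Expanding $\wt{B}^{(i)} = W_i\wt{B}^{\la i\ra} W_i$ with $W_i\mathbf{e}_i = -\mathbf{g}_i$, and using Gaussian concentration $|\mathbf{g}_i^*\mathbf{e}_i|, |\mathbf{g}_i^*\wt{B}^{\la i\ra}\mathbf{g}_i|\prec 1/\sqrt{N}$ (the latter since $\ntr B = 0$) together with the a priori bounds $|G_{ij}^{(i)}|, |T_{i,j}|, |S_{i,j}|\prec 1$, one obtains, as in~\eqref{091073}--\eqref{090401bis}, that $(\wt{B}^{(i)}G^{(i)})_{ij} = -S_{i,j} + \OSD(1/\sqrt{N})$, i.e.\ $(a_i - z)G_{ij}^{(i)} = S_{i,j} + \OSD(1/\sqrt{N})$.

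Next apply complex Gaussian integration by parts to $\E_{\mathbf{g}_i}[S_{i,j}]$ exactly as in~\eqref{090370}--\eqref{091101}, with $\mathbf{e}_j$ replacing $\mathbf{e}_i$ in the rightmost position. Since $\mathbf{e}_i$ remains an eigenvector of $\wt{B}^{\la i\ra}$, the calculation closes with the same structural form; the new off-diagonal inputs $\mathbf{e}_k^* \wt{B}^{\la i\ra} G^{(i)}\mathbf{e}_j$ and $\mathbf{g}_i^* Q^{\la i\ra} G^{(i)}\mathbf{e}_j$ are controlled via Lemma~\ref{lem.091720} together with the block-diagonal bound~\eqref{100510} for $G^{[i]}$, yielding contributions of size $\prec 1/\sqrt{N\eta}$. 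The resulting system reads
\begin{align*}
\E_{\mathbf{g}_i}[S_{i,j}] &= \ntr(\wt{B}G)\,\bigl(\E_{\mathbf{g}_i}[S_{i,j}] - b_i\E_{\mathbf{g}_i}[T_{i,j}]\bigr) + \ntr(\wt{B}G\wt{B})\,\bigl(G_{ij}^{(i)} + \E_{\mathbf{g}_i}[T_{i,j}]\bigr) + \OSD\bigl(\tfrac{1}{\sqrt{N\eta}}\bigr),\\
\E_{\mathbf{g}_i}[T_{i,j}] &= \ntr(G)\,\bigl(\E_{\mathbf{g}_i}[S_{i,j}] - b_i\E_{\mathbf{g}_i}[T_{i,j}]\bigr) + \ntr(\wt{B}G)\,\bigl(G_{ij}^{(i)} + \E_{\mathbf{g}_i}[T_{i,j}]\bigr) + \OSD\bigl(\tfrac{1}{\sqrt{N\eta}}\bigr),
\end{align*}
which is the off-diagonal analogue of~\eqref{091120}--\eqref{091121}.

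At this point the crucial shortcut is to \emph{reuse} the scalar identity~\eqref{091221}, namely $|\ntr(\wt{B}G) - (\ntr\wt{B}G)^2/\ntr G + \ntr(\wt{B}G\wt{B})| \prec 1/\sqrt{N\eta}$, which involves only global quantities and has already been established in the diagonal analysis. Substituting into the first equation gives $\E_{\mathbf{g}_i}[S_{i,j}] = -\frac{\ntr(\wt{B}G)}{\ntr G} G_{ij}^{(i)} + \OSD(1/\sqrt{N\eta})$. Combining with the formula $(a_i - z)G_{ij}^{(i)} = S_{i,j} + \OSD(1/\sqrt{N})$, the concentration of $S_{i,j}$, and the definition~\eqref{091260} of $\omega_B^c$, one obtains $(a_i - \omega_B^c(z))G_{ij}^{(i)} = \OSD(1/\sqrt{N\eta})$; since $|a_i - \omega_B^c(z)| \gtrsim 1$ by Lemma~\ref{cor.080601} and the already-established~\eqref{091715}, this yields~\eqref{100170}. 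The bound on $S_{i,j}$ in~\eqref{100520} then follows immediately, and that on $T_{i,j}$ from the second self-consistent equation using $|1 - \ntr(\wt{B}G) + b_i\ntr G| = |(-\omega_A + b_i)m_\boxplus| \gtrsim 1$ as in~\eqref{091231}. The main obstacle is the bookkeeping in the integration-by-parts step: under only the weak a priori bound $|G_{ij}^{(i)}|\prec 1$, terms such as $\mathbf{g}_i^*\wt{B}^{\la i\ra} G^{(i)}\mathbf{e}_j$ involving an off-diagonal Green function vector must be shown to be $\OSD(1/\sqrt{N\eta})$ uniformly, which is precisely where the block structure of $H^{[i]}$ and the inherited bound~\eqref{100510} on the $(N{-}1)$-dimensional reduced model are indispensable.
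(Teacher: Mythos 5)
Your proposal is correct and follows essentially the same route as the paper's proof: start from the $(i,j)$ entry of the Green function identity (with the $\delta_{ij}$ term absent), derive the self-consistent system for $\E_{\mathbf{g}_i}[S_{i,j}]$ and $\E_{\mathbf{g}_i}[T_{i,j}]$ by Gaussian integration by parts, reuse the already-established scalar identity~\eqref{091221}, and close via $(a_i-\omega_B^c)G_{ij}^{(i)}=\OSD(1/\sqrt{N\eta})$ and the lower bound on $|a_i-\omega_B^c|$, then deduce the $S_{i,j}$ bound and finally $T_{i,j}$ from the second equation and~\eqref{091231}. Your closing remark slightly misplaces where the block-diagonal bound~\eqref{100510} is used — it enters in the proofs of Lemmas~\ref{lem093011} and~\ref{cor1000101} to establish the a priori $\prec 1$ bounds and concentration estimates, not directly inside the integration-by-parts step of this proposition — but this does not affect the correctness of the argument.
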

 \begin{proof} The proof is similar to that of Proposition \ref{lem.0910111}.  Having established the concentration inequalities in~\eqref{093013}, it suffices to estimate $\mathbb{E}_{\mathbf{g}_i}\big[ G_{ij}^{(i)}\big]$ to prove~\eqref{100170}. We then start with
 \begin{align}\label{100160}
 (a_i-z)G_{ij}^{(i)}=-(\wt{B}^{(i)}G^{(i)})_{ij}+\delta_{ij}\,.
 \end{align}
 Choosing henceforth $i\not=j$, mimicking the reasoning from~\eqref{091073} to~\eqref{090401bis} and using ~\eqref{100155},
 we arrive at 
 \begin{align}
 (\wt{B}^{(i)}G^{(i)})_{ij}=-\mathbf{g}_i^* \widetilde{B}^{\langle i\rangle}G^{(i)}\mathbf{e}_j+\OSD\Big(\frac{1}{\sqrt{N}}\Big)=-S_{i,j}+\OSD\Big(\frac{1}{\sqrt{N}}\Big)\,. \label{100190}
 \end{align}
Then, instead of~\eqref{091101}, we obtain
\begin{align}
\mathbb{E}_{\mathbf{g}_i}[S_{i,j}]&= \mathbb{E}_{\mathbf{g}_i}\Big[\ntr  \big(\widetilde{B}^{\langle i\rangle}G^{(i)}\big)\big(S_{i,j}- b_i \,T_{i,j}\big)\Big]\nonumber\\
&\quad+\mathbb{E}_{\mathbf{g}_i}\Big[\ntr  \big(\widetilde{B}^{\langle i\rangle}G^{(i)}\widetilde{B}^{\langle i\rangle} \big)\big(G^{(i)}_{ij}+T_{i,j}\big)\Big]\nonumber\\
&\quad-\mathbb{E}_{\mathbf{g}_i}\Big[\ntr \big(\widetilde{B}^{\langle i\rangle}G^{(i)}\big)\;\big(\mathbf{e}_i^*\wt{B}^{\la i\ra}\mathbf{g}_i+\mathbf{g}_i^*\wt{B}^{\la i\ra}\mathbf{e}_i+\mathbf{g}_i^*\wt{B}^{\la i\ra}\mathbf{g}_i\big)\big(G^{(i)}_{ij}+T_{i,j}\big)\Big]\nonumber\\
&\quad-\frac{1}{N} \mathbb{E}_{\mathbf{g}_i}\Big[\Big(b_i^2G^{(i)}_{ii}+\mathbf{g}_i^*\big(\wt{B}^{\la i\ra}\big)^2G^{(i)}\mathbf{e}_i+\mathbf{e}_i^*(\wt{B}^{\la i\ra})^2G^{(i)}\mathbf{g}_i+\mathbf{g}_i^*\big(\wt{B}^{\la i\ra}\big)^2G^{(i)}\mathbf{g}_i\Big)\nonumber\\
&\qquad \qquad\qquad\times\big(G^{(i)}_{ij}+T_{i,j}\big)\Big]\,, \label{100730}
\end{align}
where we directly used the definitions in~\eqref{le off-diagonal T and S}. Then, similarly to~\eqref{091120}, using the concentration estimates in Lemma~\ref{lem093011} and in Lemma~\ref{cor1000101}, as well as the Gaussian concentration estimates in~\eqref{091080}, the bound~\eqref{100701} and Lemma~\ref{lem.0910115} for tracial quantities, we obtain
\begin{align}
\mathbb{E}_{\mathbf{g}_i}\big[S_{i,j}\big]-\ntr \big(\wt{B}G\wt{B}\big)\big(G_{ij}^{(i)}+\mathbb{E}_{\mathbf{g}_i}\big[T_{i,j}\big]\big)&=\ntr\big( \wt{B}G\big)\,\big(\mathbb{E}_{\mathbf{g}_i}\big[S_{i,j}-b_i \mathbb{E}_{\mathbf{g}_i}\big[T_{i,j}\big]\big]\big)
+\OSD\Big(\frac{1}{\sqrt{N\eta}} \Big)\,. \label{100280}
\end{align}
Analogously, we also have
\begin{align}
\mathbb{E}_{\mathbf{g}_i}\big[T_{i,j}\big]-\ntr \big(\wt{B}G\big)\,\big(G_{ij}^{(i)}+\mathbb{E}_{\mathbf{g}_i}\big[T_{i,j}\big]\big)&=\ntr \big(G\big) \,\big(\mathbb{E}_{\mathbf{g}_i}\big[S_{i,j}\big]-b_i \mathbb{E}_{\mathbf{g}_i}\big[T_{i,j}\big]\big)
+\OSD\Big(\frac{1}{\sqrt{N\eta}} \Big)\,. \label{100281}
\end{align}
Solving $\mathbb{E}_{\mathbf{g}_i}\big[S_{i,j}\big]$ from~\eqref{100280} and~\eqref{100281}, we have
\begin{align*}
\mathbb{E}_{\mathbf{g}_i}\big[S_{i,j}\big]&=-\frac{\ntr  \big(\widetilde{B}G\big)}{\ntr G} G_{ij}^{(i)}+\bigg[\frac{\ntr\big( \widetilde{B}G\big)-\big(\ntr\big(\widetilde{B}G\big)\big)^2}{\ntr G}+\ntr \big( \widetilde{B}G\widetilde{B}\big)\bigg]\big(G_{ij}^{(i)}+\mathbb{E}_{\mathbf{g}_i}\big[T_{i,j}\big]\big)\nonumber\\ &\qquad\qquad+\OSD\Big(\frac{1}{\sqrt{N\eta}} \Big)\,.
\end{align*}
Using~\eqref{091221}, the assumption $|G_{ij}^{(i)}|\prec 1$ and the bound $|T_{i,j}|\prec 1$ of~\eqref{100155}, we have
\begin{align}
\mathbb{E}_{\mathbf{g}_i}\big[S_{i,j}\big]=-\frac{\ntr  \big(\widetilde{B}G\big)}{\ntr G} G_{ij}^{(i)}+\OSD\Big(\frac{1}{\sqrt{N\eta}} \Big)\,, \label{100702}
\end{align}
which together with~\eqref{100160},~\eqref{100190}, the concentration estimate~\eqref{100152} implies that
\begin{align}
(a_i-\omega_B^c) G_{ij}^{(i)}=\OSD\Big(\frac{1}{\sqrt{N\eta}}\Big)\,. \label{100521}
\end{align}
This proves the estimate in~\eqref{100170}. 

Next, we bound $S_{i,j}$. Starting from~\eqref{100702} we directly get the second estimates in~\eqref{100520} from the Green function bound~\eqref{100170} and the concentration estimate~\eqref{100152}. 

It remains to estimate $T_{i,j}$. Plugging the bound on $G_{ij}$ in~\eqref{100170} and the bound on $S_{i,j}$ in~\eqref{100520} into the equation~\eqref{100281}, we obtain
\begin{align}
\big|\big(1-\ntr \big(\wt{B}G\big)+b_i\ntr G\big) \mathbb{E}_{\mathbf{g}_i}\big[T_{i,j}\big]\big|\prec\frac{1}{\sqrt{N\eta}}\,. \label{100526}
\end{align}
Invoking the estimate~\eqref{091231} we get $\big|\mathbb{E}_{\mathbf{g}_i}\big[T_{i,j}\big]\big|\prec\frac{1}{\sqrt{N\eta}}$. Then the first estimate in~\eqref{100520} follows from the concentration estimate for $T_{i,j}$ in~\eqref{100152}. This completes the proof.\qedhere
 \end{proof}

 Having established Lemma \ref{lem093011} and Proposition \ref{pro.100501}, we next prove~\eqref{100710} of Theorem~\ref{thm.091201} via a continuity argument similar to the proof of~\eqref{0912100}.

 \begin{proof}[Proof of~\eqref{100710} of Theorem~\ref{thm.091201}]  Fixing any 
 $z\in \mathcal{S}_{\mathcal{I}}(\eta_{\mathrm{m}},1)$ and 
  using Proposition \ref{pro.100501}, under the assumption
 \begin{align}
\max_{i\neq j} |G_{ij}^{(i)}(z)|\prec 1\,, \label{restate the bound for off-diagonal}
 \end{align} 
 we have 
  \begin{align}
\max_{i\not=j}|G_{ij}^{(i)}(z)|\prec \frac{1}{\sqrt{N\eta}}\,,\qquad 
\max_{i\not=j}|T_{i,j}(z)|\prec \frac{1}{\sqrt{N\eta}}\,,\qquad  \max_{i\not=j}|S_{i,j}(z)|\prec \frac{1}{\sqrt{N\eta}} \,. \label{collection of the bounds in Prop 8.3}
 \end{align}
 Then, by~\eqref{collection of the bounds in Prop 8.3} and~\eqref{100501}, we can use Lemma ~\ref{lem.091060}
 to get 
  \begin{align}
 |G_{ij}(z)|\prec \frac{1}{\sqrt{N\eta}}. \label{restate of the estimate of off-diagonal entry}
 \end{align} 
 Hence, in principle, it suffices  to conduct a  continuity  argument from $\eta=1$ to $\eta=\eta_{\mathrm{m}}$ 
 (similar to the proof  of~\eqref{0912100} of Theorem~\ref{thm.091201}) 
 to show that the bound~\eqref{restate the bound for off-diagonal} holds uniformly for $z\in \mathcal{S}_{\mathcal{I}}(\eta_{\mathrm{m}},1)$. However, in order to show that~\eqref{restate of the estimate of off-diagonal entry} also holds uniformly for $z\in \mathcal{S}_{\mathcal{I}}(\eta_{\mathrm{m}},1)$ quantitatively, we  monitor $G_{ij}$ in the  
 continuity argument as well.  To this end, we introduce the $z$-dependent random variable
 \begin{align*}
  \Lambda_{\mathrm{o}}\equiv\Lambda_{\mathrm{o}}(z)\deq \max_{i\neq j}| G_{ij}^{(i)}(z)|+\max_{i\neq j}| G_{ij}(z)|\,,
 \end{align*}
 and, for any $\delta\in [0,1]$ and $z\in \mathcal{S}_{\mathcal{I}}(\eta_{\mathrm{m}},1)$, we  define the event
 \begin{align*}
 \Theta_{\mathrm{o}}(z,\delta)\deq\{\Lambda_{\mathrm{o}}(z)\leq \delta\}\,;
 \end{align*}
 \cf~\eqref{def of Lambda} and~\eqref{def of Xi}. The subscript $\mathrm{o}$ refers to ``off-diagonal".
 
 We will mimic the proof of ~\eqref{0912100}. 
 Analogously, using Lemma~\ref{lem.091060} and Proposition~\ref{pro.100501}, 
 one shows that there exists an event $\Omega_{\mathrm{o}}(z)\equiv \Omega_{\mathrm{o}}(z,D,\varepsilon)$ such that the conclusions in Lemma~\ref{lem.091282} still hold  when we replace $\Theta_{\mathrm{d}}(z,\delta)$ by $\Theta_{\mathrm{o}}(z,\delta)$, $\Omega_{\mathrm{d}}(z)$
 by $\Omega_{\mathrm{o}}(z)$ and $N^{-\frac{\gamma}{4}}$ by $1$. 
  We also set $\delta=1$ in this proof. 
 This is a quantitative description of the derivation of the first bound in~\eqref{collection of the bounds in Prop 8.3} and (\ref{restate of the estimate of off-diagonal entry}) from~\eqref{restate the bound for off-diagonal}. The main difference is that here $\Omega_{\mathrm{o}}(z)$ is 
 the  event defined  as the intersection of the ``typical" events in all
 the concentration estimates in
 Sections~\ref{s.5}--\ref{section partial concentration}, in the proofs of Lemma~\ref{lem093011}
 and Proposition~\ref{pro.100501},
and the event on which the following bounds hold
 \begin{align}
& \Big| G_{ii}^{(i)}(z)-\big(a_i-\omega_B(z)\big)^{-1}\Big| \leq  \frac{N^\varepsilon}{\sqrt{N\eta}}\,,\nonumber\\
& \Big| G_{ii}(z)-\big(a_i-\omega_B(z)\big)^{-1}\Big| \leq \frac{N^\varepsilon}{\sqrt{N\eta}}\,, \qquad\quad \max_{i\neq j}\big|G^{[i]}_{jj}(z)\big|\leq  N^{\varepsilon}\,. \label{bound of G and minor of G}
 \end{align} 
Note that, by (\ref{100501}) and (\ref{100510}), we know that  (\ref{bound of G and minor of G}) holds with high probability uniformly on $ \mathcal{S}_{\mathcal{I}}(\eta_{\mathrm{m}},1)$.

 With the analogue of Lemma \ref{lem.091282} for $\Theta_{\mathrm{o}}(z, \delta=1)$ and $\Omega_{\mathrm{o}}(z)$, we conduct a continuity argument  similar to the one in the proof of~\eqref{0912100}. Again, by Lipschitz continuity of the Green function it suffices to show estimate~\eqref{100710} on the lattice $ \widehat{\mathcal{S}}_{\mathcal{I}}(\eta_{\mathrm{m}},1)$ defined in~\eqref{100715}. We fix $E\in \mathcal{I}\cap N^{-5}\mathbb{Z}$, write $z=E+\ii\eta$ and decrease $\eta$ from  $\eta=1$ down to $N^{-1+\gamma}$ in steps of size~$N^{-5}$. The initial estimate for $\eta=1$, \ie $\Lambda_{\mathrm{o}}(E+\mathrm{i})\leq 1$ follows directly from the trivial fact $\|G^{(i)}(z)\|, \|G(z)\|\leq 1/\eta$. Then one can show step by step that for any $\eta\in [\eta_{\mathrm{m}},1]$, say, 
\begin{align}
\Theta_{\mathrm{o}}\big(\eta,  1 \big)\cap \Omega_{\mathrm{o}} (\eta-N^{-5})\subset \Theta_{\mathrm{o}}\Big(\eta-N^{-5},  1\Big)\,, \label{key inclusion for off-diagonal}
\end{align}
which is the analogue of~\eqref{key inclusion}. The remaining proof is nearly the same as the counterpart in the proof of of~\eqref{0912100}. We thus omit the details.
\end{proof}

\begin{appendix} 
\section{Orthogonal case}\label{s.a.a}

In this appendix, we show that Theorem~\ref{thm.091201} and Theorem~\ref{thm.091501} also hold in the orthogonal setup where $U$ is Haar distributed on the orthogonal group $O(N)$. From the proof of Theorem \ref{thm.091501}, we see that it is implied by Theorem~\ref{thm.091201}. Hence, it suffices to discuss the latter. We outline the necessary changes in the discussion of Sections~\ref{s.5}-\ref{s.8} to adapt our proof to the orthogonal case. We mainly show the modification for the proof of (\ref{0912100}) in detail, and (\ref{100710}) will be discussed briefly at the end.

First, we modify some notation.  We start with the decomposition for the Haar measure on the orthogonal group analogous to~\eqref{le first decomposition}. For all $i\in\llbracket 1,N\rrbracket$, according to ~\cite{Mezzadri}, there exist a random vector $\mathbf{v}_i=(v_{i1},\ldots, v_{iN})$, uniformly distributed on the real unit $(N-1)$-sphere $\mathcal{S}_{\R}^{N-1}\deq \{\mathbf{x}\in\R^N\,:\, \mathbf{x}^*\mathbf{x}=1   \}$, and a Haar distributed orthogonal matrix $U^i\in O(N-1)$, which is independent of $\mathbf{v}_i$, such that one has the decomposition
\begin{align*}
U=-\sgn(v_{ii})(I-\mathbf{r}_i\mathbf{r}_i^*)U^{\la i\ra}\deq -\sgn(v_{ii})R_i\,U^{\la i\ra}\,,
\end{align*}
where 
\begin{align}
\mathbf{r}_i\deq\sqrt{2}\frac{\mathbf{e}_i+\sgn(v_{ii})\mathbf{v}_i}{\|\mathbf{e}_i+\sgn(v_{ii})\mathbf{v}_i\|_2}\,,     \qquad\qquad  R_i\deq I-\mathbf{r}_i\mathbf{r}_i^*\,,
\end{align}
 and $U^{\la i\ra}$  is the orthogonal matrix with $\mathbf{e}_i$ as its $i$th column and $U^i$ as its $(i,i)$-matrix minor. Moreover, there is a real Gaussian vector $\mathbf{g}_i\sim\mathcal{N}_\R(0, N^{-1}I)$ such that
 \begin{align*}
 \mathbf{v}_i=\frac{\widetilde{\mathbf{g}}_i}{\|\widetilde{\mathbf{g}}_i\|_2}\,,
 \end{align*}
 Similarly to (\ref{def of g}), we define
 \begin{align*}
 g_{ik}\deq\mathrm{sgn}(v_{ii})\,\widetilde{g}_{ik}\,,\qquad\qquad k\neq i\,,
 \end{align*}
 and introduce an $N(0,N^{-1})$ variable $g_{ii}$, which is independent of the orthogonal matrix~$U$ and of $\widetilde{\mathbf{g}}_i$. 
Let $\mathbf{g}_i\deq(g_{i1},\ldots,g_{iN})$ and note that $\mathbf{g}_i\sim \mathcal{N}_{\mathbb{R}}(0,N^{-1}I)$.
Then we set $\mathbf{w}_i\deq\mathbf{e}_i+\mathbf{g}_i$ and $W_i\deq I-\mathbf{w}_i\mathbf{w}_i^*$ as before.
With these modifications, we follow the proofs in Sections~\ref{s.5}-\ref{s.7} verbatim. The only difference is the derivation of~\eqref{091105}. Instead of~\eqref{091350}, we use the following integration by parts formula for real Gaussian random variables
\begin{align}
\int_{\mathbb{R}} g f(g)\,\e{-\frac{g^2}{2\sigma^2}} {\rm d} g=\sigma^2\int_{\mathbb{R}} f'(g)\,\e{-\frac{g^2}{2\sigma^2}} {\rm d}  g \,,\label{091380}
\end{align}
for differentiable functions $f\,:\, \R\to\R$.
Correspondingly, instead of~\eqref{091755}, we have 
\begin{align*}
\frac{\partial W_i}{\partial g_{ik}}=-\mathbf{e}_i\mathbf{e}_k^*-\mathbf{e}_k\mathbf{e}_i^*-\mathbf{e}_k\mathbf{g}_i^*-\mathbf{g}_i\mathbf{e}_k^*\,.
\end{align*}
Hence, we get 
\begin{align*}
&\frac{\partial \big(\widetilde{B}^{\langle i\rangle}G^{(i)}\big)_{kj}}{\partial g_{ik}}\nonumber\\ &\qquad=\mathbf{e}_k^*\widetilde{B}^{\langle i\rangle}G^{(i)}\big(\mathbf{e}_i\mathbf{e}_k^*+\mathbf{e}_k\mathbf{e}_i^*+\mathbf{e}_k\mathbf{g}_i^*+\mathbf{g}_i\mathbf{e}_k^*\big)\widetilde{B}^{\langle i\rangle}\big(I-\mathbf{e}_i\mathbf{e}_i^* -\mathbf{e}_i\mathbf{g}_i^*-\mathbf{g}_i\mathbf{e}_i^*-\mathbf{g}_i\mathbf{g}_i^*\big) G^{(i)}\mathbf{e}_j\nonumber\\
&\qquad+\mathbf{e}_k^*\widetilde{B}^{\langle i\rangle}G^{(i)}\big(I-\mathbf{e}_i\mathbf{e}_i^* -\mathbf{e}_i\mathbf{g}_i^*-\mathbf{g}_i\mathbf{e}_i^*-\mathbf{g}_i\mathbf{g}_i^*\big)\widetilde{B}^{\langle i\rangle}\big(\mathbf{e}_i\mathbf{e}_k^*+\mathbf{e}_k\mathbf{e}_i^*+\mathbf{e}_k\mathbf{g}_i^*+\mathbf{g}_i\mathbf{e}_k^*\big) G^{(i)}\mathbf{e}_j
\end{align*}
instead of~\eqref{091758}. Substitution into the identity
 \begin{align*}
\mathbb{E}_{\mathbf{g}_i}[\mathbf{g}_i^* \widetilde{B}^{\langle i\rangle}G^{(i)}\mathbf{e}_j]=\sum_{k=1}^N \mathbb{E}_{\mathbf{g}_i} \big[g_{ik} (\widetilde{B}^{\langle i\rangle}G^{(i)})_{kj}\big]= \frac{1}{N}\sum_{k=1}^N \mathbb{E}_{\mathbf{g}_i} \bigg[\frac{\partial (\widetilde{B}^{\langle i\rangle}G^{(i)})_{kj}}{\partial g_{ik}}\bigg]\,,
 \end{align*} 
yields
\begin{align}
\mathbb{E}_{\mathbf{g}_i}[\mathbf{g}_i^* \widetilde{B}^{\langle i\rangle}G^{(i)}\mathbf{e}_j]&= (\text{r.h.s. of~\eqref{100730}})+\frac{1}{N}\mathbb{E}_{\mathbf{g}_i}\Big[\big(G^{(i)}(\wt{B}^{\la i\ra})^2G^{(i)}\big)_{ji}\Big]\nonumber\\
&\quad+\frac{1}{N}\mathbb{E}_{\mathbf{g}_i}\Big[\mathbf{e}_j^*G^{(i)}(\wt{B}^{\la i\ra})^2G^{(i)}\mathbf{g}_i\Big] +\frac{1}{N}\mathbb{E}_{\mathbf{g}_i}\Big[\big( G^{(i)}\wt{B}^{\la i\ra} G^{(i)}\wt{B}^{\la i\ra}\big)_{ji}\Big]\nonumber\\
&\quad+\frac{1}{N}\mathbb{E}_{\mathbf{g}_i}\Big[\mathbf{e}_j^*G^{(i)}\wt{B}^{\la i\ra} G^{(i)}\wt{B}^{\la i\ra}\mathbf{g}_i\Big]-\frac{1}{N}\mathbb{E}_{\mathbf{g}_i}\Big[ \widehat b_i\big(G^{(i)}\wt{B}^{\la i\ra}G^{(i)}\big)_{ji}\Big]\nonumber\\
&\quad-\frac{1}{N}\mathbb{E}_{\mathbf{g}_i}\Big[\widehat{b}_j\,\mathbf{e}_j^*G^{(i)}\wt{B}^{\la i\ra}G^{(i)}\mathbf{g}_i\Big] \nonumber\\
&\quad-\frac{1}{N} \mathbb{E}_{\mathbf{g}_i}\Big[(G^{(i)}_{ij}+\mathbf{g}_i^*G^{(i)}\mathbf{e}_j)
\Big(\mathbf{e}_i^*(\wt{B}^{\la i\ra})^2G^{(i)}\mathbf{g}_i+\mathbf{g}_i^*(\wt{B}^{\la i\ra})^2G^{(i)}\mathbf{g}_i\Big)\Big]\nonumber\\
&\quad-\frac{1}{N} \mathbb{E}_{\mathbf{g}_i}\Big[(G^{(i)}_{ij}+\mathbf{g}_i^*G^{(i)}\mathbf{e}_j)\Big(b_i^2G^{(i)}_{ii}+\mathbf{g}_i^*(\wt{B}^{\la i\ra})^2G^{(i)}\mathbf{e}_i \Big)\Big]
\,,\label{091361}
\end{align}
where we introduced $\widehat b_i\deq \mathbf{w}_i\widetilde B^{\la i \ra} \mathbf{w}_i$. Note that the last two terms were discussed in the unitary setup, and they were shown to be negligible. Therefore, to get~\eqref{100280} also in the orthogonal case,  we rely on the following lemma to discard the supplementary small terms in~\eqref{091361}. At first, let us discuss the case of $i=j$, which suffices for the proof of~\eqref{0912100}.
\begin{lemma} \label{rem.091090} 
Under the assumption of Proposition~\ref{lem.0910111}, we have the following bounds
\begin{align}\def\arraystretch{1.5}\begin{array}{ll}
\big|\big(G^{(i)}(z)\wt{B}^{\la i\ra}G^{(i)}(z)\big)_{ii}\big|\prec \frac{1}{\eta}\,, \qquad\qquad & \big|\big(G^{(i)}(z)(\wt{B}^{\la i\ra})^2G^{(i)}(z)\big)_{ii}\big|\prec \frac{1}{\eta}\,,\\
 \big|\mathbf{e}_i^*G^{(i)}(z)\wt{B}^{\la i\ra}G^{(i)}(z)\mathbf{g}_i\big|\prec \frac{1}{\eta}\,, &\big|\mathbf{e}_i^*G^{(i)}(z)(\wt{B}^{\la i\ra})^2G^{(i)}(z)\mathbf{g}_i\big|\prec \frac{1}{\eta}\,,\\
\big|\big( G^{(i)}(z)\wt{B}^{\la i\ra} G^{(i)}(z)\wt{B}^{\la i\ra}\big)_{ii}\big|\prec \frac{1}{\eta}\,,&\big|\mathbf{e}_i^*G^{(i)}(z)\wt{B}^{\la i\ra} G^{(i)}(z)\wt{B}^{\la i\ra}\mathbf{g}_i\big|\prec \frac{1}{\eta}\,,\end{array} \label{091099}
\end{align}
for all $i\in \llbracket 1,\ldots, N\rrbracket$.
\end{lemma}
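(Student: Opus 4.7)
The overall strategy is to exploit the Hermiticity of $H^{(i)}=A+\widetilde B^{(i)}$ (note that $W_i$ and $\widetilde B^{\la i\ra}$ are both self-adjoint, so $\widetilde B^{(i)}=W_i\widetilde B^{\la i\ra}W_i$ is Hermitian). For any Hermitian matrix, the Green function satisfies $(G^{(i)})^*G^{(i)}=\Im G^{(i)}/\eta$, hence for any vector $\mathbf v\in\C^N$,
\begin{equation*}
\|G^{(i)}(z)\mathbf v\|_2^2=\frac{\mathbf v^*\Im G^{(i)}(z)\mathbf v}{\eta}=\frac{\Im\bigl(\mathbf v^*G^{(i)}(z)\mathbf v\bigr)}{\eta}\,.
\end{equation*}
The plan is to apply this identity with $\mathbf v=\mathbf e_i$, $\mathbf v=\mathbf g_i$, and $\mathbf v=\widetilde B^{\la i\ra}\mathbf g_i$, combined with the all-purpose bound $|\mathbf x_i^*Q_1^{\la i\ra}G^{(i)}(z)Q_2^{\la i\ra}\mathbf y_i|\prec 1$ from Corollary~\ref{cor090501} (which is applicable since \eqref{091071} gives us assumption \eqref{09083000}).

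With these ingredients the six bounds follow by Cauchy--Schwarz. For the first two quantities in \eqref{091099}, I factor as
\begin{equation*}
\bigl|\mathbf e_i^*G^{(i)}(\widetilde B^{\la i\ra})^kG^{(i)}\mathbf e_i\bigr|\le \|\widetilde B^{\la i\ra}\|^k\,\|G^{(i)}\mathbf e_i\|_2^2=\|\widetilde B^{\la i\ra}\|^k\,\frac{\Im G^{(i)}_{ii}}{\eta}\prec\frac{1}{\eta}\,,\qquad k=1,2\,,
\end{equation*}
where I used assumption~\eqref{091071} and the boundedness of $B$. For the mixed terms involving $\mathbf g_i$ I split
\begin{equation*}
\bigl|\mathbf e_i^*G^{(i)}(\widetilde B^{\la i\ra})^kG^{(i)}\mathbf g_i\bigr|\le \|\widetilde B^{\la i\ra}\|^k\,\|G^{(i)}\mathbf e_i\|_2\,\|G^{(i)}\mathbf g_i\|_2\,,\qquad k=1,2\,,
\end{equation*}
and then express $\|G^{(i)}\mathbf g_i\|_2^2=\Im(\mathbf g_i^*G^{(i)}\mathbf g_i)/\eta\prec 1/\eta$, using Corollary~\ref{cor090501} to conclude $|\mathbf g_i^*G^{(i)}\mathbf g_i|\prec 1$. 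Combining the two factors gives $\prec 1/\eta$.

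Finally, for the two terms with two factors of $\widetilde B^{\la i\ra}$: when the right endpoint is $\mathbf e_i$ I use $\widetilde B^{\la i\ra}\mathbf e_i=b_i\mathbf e_i$ to reduce to the case $k=1$ already handled; when the right endpoint is $\mathbf g_i$ I factor
\begin{equation*}
\bigl|\mathbf e_i^*G^{(i)}\widetilde B^{\la i\ra}G^{(i)}\widetilde B^{\la i\ra}\mathbf g_i\bigr|\le\|\widetilde B^{\la i\ra}\|\,\|G^{(i)}\mathbf e_i\|_2\,\|G^{(i)}\widetilde B^{\la i\ra}\mathbf g_i\|_2\,,
\end{equation*}
and then use $\|G^{(i)}\widetilde B^{\la i\ra}\mathbf g_i\|_2^2=\Im(\mathbf g_i^*\widetilde B^{\la i\ra}G^{(i)}\widetilde B^{\la i\ra}\mathbf g_i)/\eta\prec 1/\eta$, again by Corollary~\ref{cor090501}. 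The main (and essentially the only non-routine) point is verifying that assumption~\eqref{091071} permits the use of Corollary~\ref{cor090501} uniformly in $i$; once that is granted, the argument is a clean application of the Hermitian Green-function identity combined with Cauchy--Schwarz.
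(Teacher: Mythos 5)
Your proposal is correct and follows essentially the same route as the paper's proof: the first two bounds via $\|G^{(i)}\mathbf e_i\|_2^2 = \Im G^{(i)}_{ii}/\eta$, the mixed terms via Cauchy--Schwarz and $\|G^{(i)}\mathbf g_i\|_2^2 = \Im(\mathbf g_i^* G^{(i)}\mathbf g_i)/\eta$ with Corollary~\ref{cor090501}, the fifth via $\widetilde B^{\la i\ra}\mathbf e_i = b_i\mathbf e_i$, and the sixth via $\|G^{(i)}\widetilde B^{\la i\ra}\mathbf g_i\|_2$. The only difference is cosmetic: you explicitly flag the Hermiticity of $H^{(i)}$ that makes the resolvent identity $(G^{(i)})^*G^{(i)} = \Im G^{(i)}/\eta$ available, which the paper uses implicitly.
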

\begin{proof} We drop $z$ from the notation. For the first two terms, we have
\begin{align}
\big|\big(G^{(i)}(\wt{B}^{\la i\ra})^kG^{(i)}\big)_{ii}\big|\leq \|\wt{B}^{\la i\ra}\|^k\|G^{(i)}\mathbf{e}_i\|_2^2\lesssim \big((G^{(i)})^*G^{(i)}\big)_{ii}=\frac{\Im G^{(i)}_{ii}}{\eta}\prec \frac{1}{\eta}\,, \label{100731}
\end{align}
for $k=1,2$, where in the last step we used assumption~\eqref{091071}.
For the third and fourth terms, we have, for $k=1,2$,
\begin{align*}
\big|\mathbf{e}_i^*G^{(i)}(\wt{B}^{\la i\ra})^kG^{(i)}\mathbf{g}_i\big| &\leq \|\wt{B}^{\la i\ra}\|^k \|G^{(i)}\mathbf{e}_i\|_2\|G^{(i)}\mathbf{g}_i\|_2\lesssim \big(\big((G^{(i)})^*G^{(i)}\big)_{ii}\big)^{\frac12}\big(\mathbf{g}_i^*(G^{(i)})^*G^{(i)}\mathbf{g}_i\big)^{\frac12}\nonumber\\
&=\frac{1}{\eta}\big(\Im G_{ii}^{(i)}\big)^{\frac12}\big(\Im \mathbf{g}^*_i G^{(i)}\mathbf{g}_i\big)^{\frac12}\prec\frac{1}{\eta}\,,
\end{align*}
where in the last step we used assumption~\eqref{091071} and estimate~\eqref{091090}. For the fifth term we note that $( G^{(i)}\wt{B}^{\la i\ra} G^{(i)}\wt{B}^{\la i\ra})_{ii}=b_i\big( G^{(i)}\wt{B}^{\la i\ra} G^{(i)})_{ii}$ and the bound follows from~\eqref{100731}. For the last term, we have
  \begin{align}
\big|\mathbf{e}_i^*G^{(i)}\wt{B}^{\la i\ra} G^{(i)}\wt{B}^{\la i\ra}\mathbf{g}_i\big| &\leq \|\wt{B}^{\la i\ra}\|\|\mathbf{e}_i^*G^{(i)}\|_2\|G^{(i)}\wt{B}^{\la i\ra}\mathbf{g}_i\|_2\prec\frac{1}{\eta}\,, \label{100735}
\end{align}
where we used assumption~\eqref{091071} and estimate~\eqref{091090}.
This completes the proof.
\end{proof}
All the other arguments in Sections \ref{s.5}-\ref{s.7} work for the orthogonal case as well without modifications. This proves~\eqref{0912100} of Theorem~\ref{thm.091201} for the Haar orthogonal case.  
 
 For (\ref{100710}), analogously to~\eqref{091099}, we shall estimate the second to the seventh terms on the right side of~\eqref{091361}, under the assumption of Proposition~\ref{pro.100501}. To bound these terms, we can pursue the discussion from~\eqref{100731}-\eqref{100735} with $\Im G_{ii}^{(i)}$ replaced by $\Im G_{jj}^{(i)}$ in the bounding procedure. Hence, it suffices to show for all $j\neq i$ that $|G_{jj}^{(i)}|\prec 1$. To see this, we use the fact $|G_{jj}|\prec 1$  from Theorem~\ref{thm.091201} and $|G_{jj}-G_{jj}^{(i)}|\prec \frac{1}{\sqrt{N\eta}}$ from Lemma~\ref{lem.091060} with $k=j$. Note that the assumptions of Lemma \ref{lem.091060} are guaranteed by Theorem~\ref{thm.091201}, \eqref{100155} and assumption~\eqref{093022}. Hence,~\eqref{100710} also holds in the orthogonal case.

\section{Two point mass case} \label{s.a.b}
In this section, we present our result when both, $\mu_\alpha$ and $\mu_\beta$, are convex combinations of two point masses. Without loss of generality (up to shifting and scaling), we may assume that $\mu_\alpha$ and $\mu_\beta$ are of the following form,
\begin{align}
&\mu_\alpha=\xi\delta_1+(1-\xi)\delta_0\,,\qquad\quad \mu_\beta=\zeta\delta_{\theta}+(1-\zeta)\delta_0\,,\label{081210}
\end{align}
with real parameters $\xi,\zeta$ and $\theta$ satisfying
\begin{align*}
 \theta\neq 0\,,\qquad\quad \xi,\zeta\in \Big(0,\frac{1}{2}\Big]\,, \qquad \quad\xi\leq \zeta\,,\qquad \quad (\theta, \xi,\zeta)\neq \Big(-1,\frac12,\frac12\Big)\,. 
\end{align*}
Here we excluded the case  $(\theta, \xi,\zeta)=(-1,\frac12,\frac12)$ since it is equivalent to $(\theta, \xi,\zeta)=(1,\frac12,\frac12)$ under a shifting, where the latter is a special case of $\mu_\alpha=\mu_\beta$.
In Section~7 of~\cite{BES15}, we explained why the setting of~\eqref{081210} is special, and we thus excluded it from Theorem~\ref{thm.091201}.

 Following~\cite{Kargin2012a}, we argued in Lemma~7.1 of~\cite{BES15} that in the setting of~\eqref{081210} we have 
\begin{align}
\mathcal{B}_{\mu_\alpha\boxplus\mu_\beta}=(\ell_{1}, \ell_{2})\cup (\ell_{3}, \ell_{4})\,, \label{def of B case 1}
\end{align}
in case  $\mu_\alpha\neq \mu_\beta$, while we have
\begin{align}
\mathcal{B}_{\mu_\alpha\boxplus\mu_\alpha}=(\ell_{1}, \ell_{4}) \label{def of B case 2}\,,
\end{align}
in case $\mu_\alpha=\mu_\beta$, where
\begin{align*}
\ell_{1}&\deq\min\Big\{\frac{1}{2}\Big(1+\theta-\sqrt{(1-\theta)^2+4\theta r_+}\Big), \frac{1}{2}\Big(1+\theta-\sqrt{(1-\theta)^2+4\theta r_-}\Big)\Big\}\,,\\
\ell_{2}&\deq\max\Big\{\frac{1}{2}\Big(1+\theta-\sqrt{(1-\theta)^2+4\theta r_+}\Big), \frac{1}{2}\Big(1+\theta-\sqrt{(1-\theta)^2+4\theta r_-}\Big)\Big\}\,,\\
\ell_{3}&\deq\min\Big\{\frac{1}{2}\Big(1+\theta+\sqrt{(1-\theta)^2+4\theta r_+}\Big), \frac{1}{2}\Big(1+\theta+\sqrt{(1-\theta)^2+4\theta r_-}\Big)\Big\}\,,\\
\ell_{4}&\deq\max\Big\{\frac{1}{2}\Big(1+\theta+\sqrt{(1-\theta)^2+4\theta r_+}\Big), \frac{1}{2}\Big(1+\theta+\sqrt{(1-\theta)^2+4\theta r_-}\Big)\Big\}\,,
\end{align*}
with $r_{\pm}\deq\xi+\zeta-2\xi\zeta\pm\sqrt{4\xi\zeta(1-\xi)(1-\zeta)}$.

In Remark 7.2 of~\cite{BES15} we argued that, in the case $\mu_\alpha=\mu_\beta$, the point $E=1\in \mathcal{B}_{\mu_\alpha\boxplus\mu_\alpha}$ is special in the sense that $m_{\mu_{\alpha}\boxplus\mu_{\alpha}}(1+\mathrm{i}0)$ is unstable under small perturbations. We thus expect a modified local law in neighborhoods of this special point. To proceed we need some more notation. Recall the domains $\mathcal{S}_{\mathcal{I}}(a,b)$ in~\eqref{le domain S}. For given (small) $\varsigma,\gamma>0$, we set
\begin{align*}
\mathcal{S}_\mathcal{I}^{\varsigma}(a,b)\deq\bigg\{z\in \mathcal{S}_{\mathcal{I}}(a,b): \varsigma|z-1|\geq\max\Big\{\sqrt{{\rm{d_L}}(\mu_A,\mu_\alpha)},\sqrt{{\rm{d_L}}(\mu_B,\mu_\beta)}\Big\} \bigg\}
\end{align*}
and
\begin{align}
\widetilde{\mathcal{S}}_\mathcal{I}^{\varsigma}(a,b)\deq\mathcal{S}_\mathcal{I}^{\varsigma}(a,b) \cap\bigg\{z\in\mathbb{C}: |z-1|\geq \frac{N^{\gamma}}{(N\eta)^{\frac14}}\bigg\}\,.\label{081501}
\end{align}
The following proposition presents the local law under the setting~\eqref{081210}.
\begin{proposition} \label{pro.a.1}
Let $\mu_\alpha,\mu_\beta$ be as in~\eqref{081210}, with fixed $\xi$, $\zeta$ and $\theta$. Assume that the empirical eigenvalue distributions $\mu_A$, $\mu_B$ of the sequences of matrices $A$, $B$ satisfy~\eqref{le assumptions convergence empirical measures}. Fix any compact nonempty interval $\mathcal{I}\subset\mathcal{B}_{\mu_\alpha\boxplus\mu_\beta}$.  With the notations and assumptions of Theorem~\ref{thm.091201}, we have the following conclusions:
\begin{enumerate}
\item[$(i)$] If $\mu_{\alpha}\neq \mu_{\beta}$, then, for any fixed $\gamma>0$,
\begin{align*}
&\max_{1\le i\le N}\big|G_{ii}(z)-(a_i-\omega_B(z))^{-1}\big|\prec  \frac{1}{\sqrt{N\eta}}\,, \nonumber\\
&|\omega_A^c(z)-\omega_A(z)|\prec\frac{1}{\sqrt{N\eta}}\,,\qquad|\omega_B^c(z)-\omega_B(z)|\prec\frac{1}{\sqrt{N\eta}}\,,
\end{align*}
hold uniformly for all $z\in \mathcal{S}_{\mathcal{I}}(\eta_{\mathrm{m}},1)$. 
\item[$(ii)$] If $\mu_{\alpha}=\mu_{\beta}$, then, for sufficiently small $\varsigma>0$ and any fixed $\gamma>0$, 
\begin{align}\label{092101}
&\max_{1\le i\le N}\big|G_{ii}(z)- (a_i-\omega_B(z))^{-1}\big|\prec  \frac{1}{|z-1|}\frac{1}{\sqrt{N\eta}}\,\nonumber\\
&|\omega_A^c(z)-\omega_A(z)|\prec \frac{1}{|z-1|}\frac{1}{\sqrt{N\eta}}\,,\qquad |\omega_B^c(z)-\omega_B(z)|\prec \frac{1}{|z-1|}\frac{1}{\sqrt{N\eta}}\,,
\end{align}
hold uniformly for all $z\in \widetilde{\mathcal{S}}^\varsigma_{\mathcal{I}}(\eta_{\mathrm{m}},1)$.
\end{enumerate}
\end{proposition}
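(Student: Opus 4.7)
My plan is to rerun the proof architecture of Theorem~\ref{thm.091201}, observing that only the stability analysis of the subordination system genuinely needs to be revisited under the two--point--mass hypothesis. The partial randomness decomposition of Section~\ref{s.5}, the concentration estimates of Section~\ref{s.6}, and the partial--expectation computation of Section~\ref{section partial concentration} are insensitive to the precise structure of $\mu_\alpha, \mu_\beta$: they use only that $\omega_A(z)$ and $\omega_B(z)$ stay a definite distance away from the real axis on a neighborhood of $\mathcal{I}$, which remains true in both cases~$(i)$ and~$(ii)$ since $\mathcal{B}_{\mu_\alpha \boxplus \mu_\beta}$ has the explicit description~(\ref{def of B case 1})--(\ref{def of B case 2}) and $\mathcal{I}$ is compactly contained in it. Thus the approximate subordination functions $\omega_A^c, \omega_B^c$ still satisfy the perturbed system~(\ref{091280}) with residues of size $\OSD(1/\sqrt{N\eta})$ on the good events used in Lemma~\ref{lem.091282}, conditional on an a-priori-closeness $\Lambda_{\mathrm{d}}(z) \le \delta$.

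For case~$(i)$, $\mu_\alpha \neq \mu_\beta$, Section~7 of~\cite{BES15} establishes that $\PP_{\mu_A, \mu_B}$ is $S$-stable with a constant $S$ independent of $z$ on $\mathcal{S}_\mathcal{I}(0,1)$, and that the bounds~(\ref{le upper bound on omega AB})--(\ref{le lower bound on omega AB}) on $\omega_A, \omega_B$ remain valid. Feeding this into Lemma~\ref{le lemma perturbation of system} exactly as in the proof of~(\ref{0912100}), together with the continuity/bootstrap of Lemma~\ref{lem.091282}, should yield the stated bounds on all of $\mathcal{S}_{\mathcal{I}}(\eta_{\mathrm{m}},1)$ without further modification.

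For case~$(ii)$, $\mu_\alpha = \mu_\beta$, Remark~7.2 of~\cite{BES15} shows that the Jacobian $\mathrm{D}\PP_{\mu_\alpha, \mu_\alpha}$ degenerates at $z = 1$, so Lemma~\ref{cor.080601} must be replaced by a $z$-dependent version in which $S(z) \asymp 1/|z-1|$ near $1$. Feeding this into Lemma~\ref{le lemma perturbation of system} would produce
\begin{align*}
|\omega_A^c(z)-\omega_A(z)| + |\omega_B^c(z)-\omega_B(z)| \lesssim \frac{\|\wt r(z)\|_2}{|z-1|},
\end{align*}
but only provided the a-priori-closeness $\delta$ meets the refined condition $k^2 > \delta K S(z)$, i.e.\ $\delta \lesssim |z-1|$. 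Accordingly, I would replace the input target $N^{-\gamma/4}$ and the output target $N^\varepsilon/\sqrt{N\eta}$ in Lemma~\ref{lem.091282} by $N^{-\gamma/4}/|z-1|$ and $N^\varepsilon/(|z-1|\sqrt{N\eta})$, respectively. The restriction $|z-1| \geq N^\gamma/(N\eta)^{1/4}$ built into the domain~(\ref{081501}) is precisely the one that makes the improved output bound $1/(|z-1|\sqrt{N\eta})$ still small compared to $|z-1|$, so that the bootstrap closes. The initial step at $\eta = 1$ is again supplied by~\cite{Kargin}, and since a single $N^{-5}$ increment on the Lipschitz lattice alters $|z-1|$ only negligibly, the induction on $\eta$ proceeds as in Section~\ref{s.7}.

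The main obstacle is the bookkeeping of the $z$-dependent stability constant through the iteration: at every step one needs the current a-priori bound to be strictly smaller than the local radius of stability, and this forces the $1/|z-1|$ factor to appear in the output bound as well. The condition $|z-1| \geq N^\gamma/(N\eta)^{1/4}$ in~(\ref{081501}) is exactly the balance point at which this self-improvement still succeeds; for $|z-1| \lesssim (N\eta)^{-1/4}$, the linear stability is too weak to overcome the residue, which reflects the genuine instability of $m_{\mu_\alpha\boxplus\mu_\alpha}$ at $E=1$.
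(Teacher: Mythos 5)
Your high-level plan is exactly the one the paper follows: observe that the concentration machinery of Sections~\ref{s.5}--\ref{section partial concentration} uses only the bounds~\eqref{le upper bound on omega AB},~\eqref{le lower bound on omega AB} and the Lipschitz estimate~\eqref{091460}, then import from Section~7 of~\cite{BES15} the stability constant $S$ (case~$(i)$) or the $z$-dependent constant $S/|z-1|$ (case~$(ii)$), feed this into Lemma~\ref{le lemma perturbation of system}, and close via the bootstrap of Lemma~\ref{lem.091282}. You also correctly identify the role of the domain restriction $|z-1|\ge N^\gamma/(N\eta)^{1/4}$: it guarantees the compatibility condition $\delta K S/|z-1|<k^2$, i.e.\ $\delta\lesssim |z-1|$, of Lemma~\ref{le lemma perturbation of system}.

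The one place where your bookkeeping deviates from the paper, and where it is actually problematic, is the proposal to weaken the \emph{input} target in Lemma~\ref{lem.091282} from $N^{-\gamma/4}$ to $N^{-\gamma/4}/|z-1|$. On the domain $\widetilde{\mathcal{S}}^\varsigma_\mathcal{I}(\eta_{\mathrm m},1)$ one only has $|z-1|\gtrsim N^{\gamma-1/4}$, so $N^{-\gamma/4}/|z-1|$ can be as large as $N^{1/4-5\gamma/4}$, which for small $\gamma$ diverges with~$N$. The concentration arguments of Lemmas~\ref{lem090501} and~\ref{lem.0910111} require the a~priori closeness to be genuinely $o(1)$ (e.g.\ in~\eqref{091003} one uses that $G_{ii}^{(i)}$ is bounded), so the weakened input target would break those proofs. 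The weakening is also unnecessary: since $N\eta\ge N^\gamma$, the output $N^\varepsilon/(|z-1|\sqrt{N\eta})$ is automatically bounded by $N^{\varepsilon-5\gamma/4}\le N^{-9\gamma/8}$ on the domain, so after a one-step Lipschitz propagation (costing only $O(N^{-3})$) it is still $\le N^{-\gamma/4}$, and the original input threshold can be kept unchanged. This is what the paper does: it keeps the threshold $N^{-\gamma/4}$, chooses $\delta=\frac{N^\gamma}{|z-1|}\frac{1}{\sqrt{N\eta}}+O(N^{-3})$ in~\eqref{le apriori closeness}, and verifies $\delta S\le\varepsilon|z-1|$ from the domain restriction. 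Apart from this point, your proposal matches the paper's argument.
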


\begin{proof} Recall the notation $\Gamma_{\mu_1,\mu_2}(\omega_1,\omega_2)$ in~\eqref{le what stable means}. In~\cite{BES15} (see the proof of Proposition 7.4 therein), we proved that under the setting~\eqref{081210} and assumption~\eqref{le assumptions convergence empirical measures}, one has the following results on the stability of the system $\Phi_{\mu_A,\mu_B}(\omega_A,\omega_B,z)=0$: There exists a positive constant~$S$ such that the following two estimates hold.
\begin{enumerate}
\item[(i):] If $\mu_{\alpha}\neq \mu_{\beta}$, we have 
\begin{align*}
\Gamma_{\mu_A,\mu_B}(\omega_A,\omega_B)\leq S\,,\qquad\qquad z\in \mathcal{S}_{\mathcal{I}}(0,1)\,,
\end{align*}
and~\eqref{091460},~\eqref{le upper bound on omega AB} and~\eqref{le lower bound on omega AB} hold on $ \mathcal{S}_{\mathcal{I}}(0,1)$.
\item[(ii:)] If $\mu_{\alpha}= \mu_{\beta}$, we have
\begin{align}
\Gamma_{\mu_A,\mu_B}(\omega_A,\omega_B)\leq \frac{S}{|z-1|}\,,\qquad\qquad z\in  \mathcal{S}^\varsigma_{\mathcal{I}}(0,1)\,, \label{092157}
\end{align}
and~\eqref{le upper bound on omega AB} and~\eqref{le lower bound on omega AB} hold on $\mathcal{S}^\varsigma_{\mathcal{I}}(0,1)$, while ~\eqref{091460} holds on $\mathcal{S}^\varsigma_{\mathcal{I}}(0,1)$ with $S$ replaced by $\frac{S}{|z-1|}$.
\end{enumerate}
Note that, the proofs of Lemma~\ref{lem090501}, Lemma~\ref{lem.0910111} and Lemma~\ref{lem.091282} still work since we have the bounds~\eqref{091460},~\eqref{le upper bound on omega AB}, and~\eqref{le lower bound on omega AB} as well. Although the bound in ~\eqref{091460} should be replaced by $\frac{2S}{|z-1|}$ in the case $\mu_{\alpha}= \mu_{\beta}$, it is harmless for our proof. Hence, analogously to the proof of Theorem~\ref{thm.091201}, one can use Lemma~\ref{le lemma perturbation of system},~Lemma~\ref{lem.091282} and estimates~\eqref{091460},~\eqref{le upper bound on omega AB} and~\eqref{le lower bound on omega AB}, to complete the proof of Proposition~\ref{pro.a.1}. Especially, the proof in the case $\mu_\alpha\neq \mu_\beta$ exactly agrees with the proof of Theorem~\ref{thm.091201}.

For the case $\mu_{\alpha}= \mu_{\beta}$, we need to replace $S$ by $\frac{S}{|z-1|}$ in Lemma \ref{le lemma perturbation of system} due to~\eqref{092157}.  In the sequel, we simply illustrate the continuity argument in this case.  
Let $z,z'\in \widetilde{\mathcal{S}}_\mathcal{I}^{\varsigma}(a,b)$, where $z=E+\mathrm{i}\eta$ and $z'=E+\mathrm{i}\eta'$, with $\eta'=\eta+N^{-5}$. In addition, we set $z_0=z$, $\omega_1=\omega_A$, $\omega_2=\omega_B$, $\wt{\omega}_1=\omega_A^c$ and $\wt{\omega}_2=\omega_B^c$ in Lemma~\ref{le lemma perturbation of system}. 
Suppose now that~\eqref{092101} holds for $z'$. Using the Lipschitz continuity of the Green function (\ie $\|G(z)-G(z')\|\leq N^2|z-z'|$) and of the subordination functions~$\omega_A(z)$ and~$\omega_B(z)$ (\cf~\eqref{091460} with $S$ replaced by $\frac{S}{|z-1|}$), we can choose $\delta$ in~\eqref{le apriori closeness} to be
\begin{align}
\delta=\frac{N^{\gamma}}{|z-1|}\frac{1}{\sqrt{N\eta}}+O(N^{-3})\,,\label{092150}
\end{align}
In light of the condition $k^2>\delta K\frac{S}{|z-1|}$ (\cf sentence above~\eqref{le stability sum ims}, with $S$ replaced by $\frac{S}{|z-1|}$), one needs to guarantee that $\delta S\leq |z-1|\varepsilon$, for sufficiently small constant $\varepsilon>0$, which is a direct consequence of the assumption that $z\in \widetilde{\mathcal{S}}_\mathcal{I}^{\varsigma}(a,b)$ and~\eqref{092150}.
 Note that $\|\wt{r}(z)\|_2\prec\frac{1}{\sqrt{N\eta}}$ remains valid since estimate~\eqref{091701} does not depend on the stability of the system $\Phi_{\mu_A,\mu_B}(\omega_A,\omega_B,z)=0$, as long as~\eqref{le upper bound on omega AB},~\eqref{le lower bound on omega AB} and~\eqref{091460} hold. The remaining parts of the proof are analogous to those of Theorem~\ref{thm.091201} and we thus omit the details.
\end{proof}

\end{appendix}

\end{document}